\documentclass[11pt,reqno]{amsart}

\usepackage[colorlinks,linkcolor={blue},
citecolor={blue},urlcolor={blue}]{hyperref}

\usepackage{mathrsfs}
\usepackage{bbm}
\usepackage{amsfonts}

\usepackage{}
\usepackage{pifont}
\usepackage[shortlabels]{enumitem}
\usepackage{empheq}
\usepackage{amsfonts}
\usepackage{amsmath, amsrefs, amsthm, amssymb}
\usepackage[active]{srcltx}		
\usepackage{pdfsync}					
\usepackage{graphicx}
\usepackage [latin1]{inputenc}
\usepackage{bbm}
\usepackage{amsfonts}
\usepackage{amsthm}
\usepackage{graphicx}
\usepackage{framed}
\usepackage{multicol,multienum}
\usepackage{graphicx} 
\usepackage{booktabs} 
\usepackage{mathrsfs}
\usepackage{tcolorbox}
\usepackage{color}
\usepackage{xcolor}

\setlength{\textwidth}{16.5cm}
\setlength{\oddsidemargin}{0cm}
\setlength{\evensidemargin}{0cm}
\setlength{\topmargin}{0cm}
\setlength{\textheight}{22.5cm}
\linespread{1.1} 

\newtheorem{theorem}{Theorem}[section]
\newtheorem{proposition}[theorem]{Proposition}
\newtheorem{lemma}[theorem]{Lemma}
\newtheorem{corollary}[theorem]{Corollary}

\theoremstyle{definition}

\newtheorem{remark}[theorem]{Remark}

\numberwithin{equation}{section}

\begin{document}

\title [Chemotaxis-Navier-Stokes system in $\mathbb{R}^2$]{Existence and uniqueness of global large-data solutions for the Chemotaxis-Navier-Stokes system in $\mathbb{R}^2$}

\author{Fan Xu}
\address{School of Mathematics and Statistics, Hubei Key Laboratory of Engineering Modeling  and Scientific Computing, Huazhong University of Science and Technology,  Wuhan 430074, Hubei, P.R. China.}
\email{d202280019@hust.edu.cn (F. Xu)}

\author{Bin Liu}
\address{School of Mathematics and Statistics, Hubei Key Laboratory of Engineering Modeling  and Scientific Computing, Huazhong University of Science and Technology,  Wuhan 430074, Hubei, P.R. China.}
\email{binliu@mail.hust.edu.cn (B. Liu)}

\keywords{Chemotaxis-Navier-Stokes system; Energy estimate; Strong solution; Classical solution; Smooth solution.}

\date{\today}

\begin{abstract}
This work investigates the Cauchy problem for the classical Chemotaxis-Navier-Stokes (CNS) system in $\mathbb{R}^2$. We establish the global existence and uniqueness of strong, classical, and arbitrarily smooth solutions under large initial data, which has not been addressed in the existing literature. The key idea is to first derive an entropy-energy estimate for initial data with low regularity, by leveraging the intrinsic entropy structure of the system. Building on this foundation, we then obtain higher-order energy estimates for smoother initial data via a bootstrap argument, in which the parabolic nature of the CNS system plays a crucial role in the iterative control of regularity.
\end{abstract}

\maketitle
\section{Introduction}
\subsection{Statement of the problem}
Chemotaxis is a fundamental biological phenomenon observed in a wide variety of organisms, ranging from bacteria and unicellular eukaryotes to complex multicellular systems. It enables these organisms to detect and respond to spatial gradients of chemical concentrations in their environment~\cite{keller1970,keller1971}. Essentially, chemotaxis functions as a directional sensing mechanism, allowing organisms to migrate toward favorable conditions or to avoid adverse stimuli. In 2005, Tuval et al.~\cite{tuval2005} proposed the Chemotaxis-Navier-Stokes (CNS) model to capture the intricate dynamics of bacterial suspensions within a fluid medium. The CNS system mathematically describes the coupled interactions between bacterial density, chemical concentration, and fluid flow, incorporating the effects of bacterial buoyancy on the fluid velocity field.

In the study of the CNS system, considerable attention has been devoted to analyzing the influence of viscous fluid effects on chemotactic behavior within the framework of partial differential equations. Existing results primarily concern the global existence of solutions~\cite{chae2013existence,ding2022generalized,duan2010global,jeong2022well,liu2011coupled,lorz2010coupled,winkler2012global,winkler2017far,winkler2022does,zhang2014global} and their long-time asymptotic behavior~\cite{chae2014global,17di2010chemotaxis,duan2017global}, with the goal of deepening the understanding of the complex interplay between chemotaxis and fluid dynamics.

In particular, in a two-dimensional smooth bounded domain, Winkler~\cite{winkler2012global} established the existence of a unique global classical solution by employing Kato's method combined with semigroup smoothing estimates. In the whole space $\mathbb{R}^2$, Duan, Lorz, Markowich and Liu~\cite{duan2010global,liu2011coupled} proved the global existence of weak solutions, and later, Zhang and Zheng~\cite{zhang2014global} further established both the existence and uniqueness of global weak solutions under comparatively mild assumptions. Motivated by these results in both bounded and unbounded settings, a natural mathematical question arises: \emph{Under large initial data conditions, does the CNS system admit a unique global strong, classical, and smooth solution in $\mathbb{R}^2$?} The primary objective of this study is to provide an affirmative answer to this question. Specifically, we aim to establish the global well-posedness of the following classical CNS system:
\begin{equation}\label{CNS0}
\left\{
\begin{aligned}
&n_t+u\cdot\nabla n=\Delta n-\nabla\cdot(n\nabla c),&& \textrm{in}~ \mathbb{R}_+\times \mathbb{R}^2,\\
&c_t+u\cdot\nabla c=\Delta c-nc,&&\textrm{in}~ \mathbb{R}_+\times\mathbb{R}^2,\\
&u_t+(u\cdot\nabla)u=\Delta u+\nabla P+n\nabla\phi,&&\textrm{in}~\mathbb{R}_+\times\mathbb{R}^2,\\
&\nabla\cdot u=0,&&\textrm{in}~ \mathbb{R}_+\times\mathbb{R}^2,\\
&n|_{t=0}=n_0,~c|_{t=0}=c_0,~u|_{t=0}=u_0,&&\textrm{in}~\mathbb{R}^2,
\end{aligned}
\right.
\end{equation}
where $\mathbb{R}_+:=[0,\infty)$, and the unknown functions $n(t,x)$, $c(t,x)$, and $u(t,x)$ represent the cell density, the chemical concentration, and the fluid velocity field, respectively. The scalar function $P(t,x)$ denotes the pressure, and $\phi(x)$ stands for the gravitational potential.

\subsection{Notations}\label{nnnn}
In this paper, we adopt the following conventions: The inequality $A \lesssim_{a,b,\cdots} B$ means that there exists a positive constant $C$ depending only on $a, b, \cdots$ such that $A \leq C B$, while $A \asymp_{a,b,\cdots} B$ indicates that there exist two positive constants $C \leq C'$ depending only on $a, b, \cdots$ such that $C B \leq A \leq C' B$.

Let $X$ and $Y$ be two Banach spaces. We denote by $\mathcal{L}(X; Y)$ the space of all bounded linear operators from $X$ to $Y$. The symbol $\langle \cdot, \cdot \rangle_{X', X}$ stands for the standard duality pairing, where $X' := \mathcal{L}(X; \mathbb{R})$ is the dual space of $X$. In particular, if $X$ is a Hilbert space, then the symbol $(\cdot, \cdot)_X$ denotes the scalar product.

Let $\mathcal{A}=(\mathcal{A},\Sigma,\mu)$ be a measure space, where $\Sigma$ is a $\sigma$-algebra on $\mathcal{A}$, and $\mu$ is a non-negative measure. Let $\mathcal{B}$ be a Banach space over $\mathbb{R}$. For $p\in[1,\infty)$: The space $L^p(\mathcal{A};\mathcal{B})$ is defined as the set of all (equivalence classes of) strongly measurable functions $f:\mathcal{A}\mapsto \mathcal{B}$ such that $\|f\|_{\mathcal{B}}^p$ is integrable over $\mathcal{A}$ with respect to $\mu$. For $p=\infty$: The space $L^\infty(\mathcal{A};\mathcal{B})$ is defined as the set
of all (equivalence classes of) strongly measurable functions $f:\mathcal{A}\mapsto \mathcal{B}$ that are essentially bounded on $\mathcal{A}$.

Let $\mathcal{V}:=\{f\in \mathcal{C}_0^{\infty}(\mathbb{R}^2;\mathbb{R}^2):\nabla\cdot f=0\}$, where $\mathcal{C}_0^{\infty}(\mathbb{R}^2;\mathbb{R}^2)$ denotes the space of all $\mathbb{R}^2$-valued functions of class $\mathcal{C}^{\infty}$ with compact supports. Let $\Lambda^s:=(I-\Delta)^{\frac{s}{2}}$ denote the Bessel operator \cite{bahouri2011fourier}. Let $H^s(\mathbb{R}^2)$ be a Hilbert space endowed with the norm
\begin{equation*}
\begin{split}
\|f\|_{H^s}=\|\Lambda^sf\|_{L^2}=\left[\int_{\mathbb{R}^2}(1+|\xi|^2)^s|\hat{f}(\xi)|^2\textrm{d}\xi\right]^{\frac{1}{2}},
 \end{split}
 \end{equation*}
where $\hat{f}$ denotes the Fourier transform of a tempered distribution $f$. Let
\begin{equation*}
\begin{split}
\mathbb{H}^s:=\textrm{the closure of}~\mathcal{V}~ \textrm{in}~H^s(\mathbb{R}^2;\mathbb{R}^2).
\end{split}
\end{equation*}

Let $Q_w$ denote the Hilbert space $Q$ endowed with the weak topology. Let $\{\mathcal{O}_d\}_{d\in\mathbb{N}}$ be a sequence of bounded open subsets of $\mathbb{R}^2$ with regular boundaries $\partial\mathcal{O}_d$ such that $\mathcal{O}_d\subset\mathcal{O}_{d+1}$ and $\bigcup_{d=1}^{\infty}\mathcal{O}_d=\mathbb{R}^2$.  Let $H^k(\mathcal{O}_d)$ denote the space of restrictions of functions defined on $\mathbb{R}^2$ to subsets $\mathcal{O}_d$, i.e.
$$H^k(\mathcal{O}_d):=\{f|_{\mathcal{O}_d}; f\in H^k(\mathbb{R}^2)\},\quad k\geq0.$$
We will use the following spaces in the subsequent content:
\begin{equation*} \begin{split}
\mathcal{C}([0,T];Q_w):=& \textrm{the~space~of~weakly~continuous~functions}~f:[0,T]\rightarrow Q\\
 & \textrm{with~the~weakest~topology~}\mathcal{T}_1\textrm{~such~that~for~all~}g\in Q \textrm{~the mappings}\\
 &\mathcal{C}([0,T];Q_w)\ni f\mapsto(f(\cdot),g)_{Q}\in \mathcal{C}([0,T];\mathbb{R})~\textrm{are~continuous}.\\
L^2_w(0,T;Q):=& \textrm{the~space}~L^2(0,T;Q)~\textrm{endowed~with~the~weak~topology~}\mathcal{T}_2.\\
L^2(0,T;H^k_{loc}(\mathbb{R}^2)):=& \textrm{the~space~of~measurable~functions}~f:[0,T]\rightarrow~H^k(\mathbb{R}^2)~\textrm{such~that~for~all}~d\in\mathbb{N}\\
 & p_{T,d}(f):=\|f\|_{L^2(0,T;H^k(\mathcal{O}_d))}:=\left(\int_0^T\|f(t)\|_{H^k(\mathcal{O}_d)}^2\mathrm{d}t\right)^{\frac{1}{2}}<\infty,\\
 & \textrm{with~the~topology}~\mathcal{T}_3\textrm{~generated~by~the~seminorms}~(p_{T,d})_{d\in\mathbb{N}}.
 \end{split} \end{equation*}

\subsection{Main results} \label{sec1.2}
Our main results are as follows.

\begin{theorem}\label{th1}
Suppose the initial data $(n_0,c_0,u_0)$ and the potential function $\phi$ satisfy
\begin{equation*}
\begin{split}
&n_0 \in L^1(\mathbb{R}^2)\cap H^1(\mathbb{R}^2),~n_0>0;\quad c_0 \in L^1(\mathbb{R}^2)\cap H^2(\mathbb{R}^2),~|\nabla \sqrt{c_0}|\in L^2(\mathbb{R}^2),~c_0>0;\\
&u_0 \in \mathbb{H}^1;\quad \nabla\phi\in L^{\infty}(\mathbb{R}^2).
 \end{split}
\end{equation*}
Then system \eqref{CNS0} admits a unique global strong solution such that for every $T>0$
\begin{equation*}
\begin{split}
&n \in  \mathcal{C}([0,T];H^1(\mathbb{R}^2))\cap L^2(0,T;H^2(\mathbb{R}^2));\quad c \in  \mathcal{C}([0,T];H^2(\mathbb{R}^2))\cap L^2(0,T;H^3(\mathbb{R}^2));\\
&u \in  \mathcal{C}([0,T]; \mathbb{H}^1)\cap L^2(0,T;\mathbb{H}^2).
 \end{split}
\end{equation*}
and for any $t \in (0,T]$, the integral identities
\begin{equation}\label{iden}
\begin{split}
(n(t),\varphi)_{L^2}&=(n_0,\varphi)_{L^2}+\int_0^t(\Delta n(r)-u(r)\cdot\nabla n(r)-\nabla\cdot (n(r)\nabla c(r)),\varphi)_{L^2}\mathrm{d}r,\\
(c(t),\varphi)_{L^2}&=(c_0,\varphi)_{L^2}+\int_0^t(\Delta c(r)-u(r)\cdot\nabla c(r)-n(r)c(r),\varphi)_{L^2}\mathrm{d}r,\\
(u(t),\psi)_{L^2}&=(u_0,\psi)_{L^2}+\int_0^t(\Delta u(r)-(u(r)\cdot\nabla)u(r)+n(r) \nabla\phi,\psi)_{L^2}\mathrm{d}r
 \end{split}
\end{equation}
hold for every $\varphi\in L^2(\mathbb{R}^2)$ and $\psi\in \mathbb{H}^0$.
\end{theorem}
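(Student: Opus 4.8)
The plan is to follow the classical three-step scheme for strong solutions: (i) construct a family of regularized/approximate solutions whose global existence is elementary; (ii) derive a chain of \emph{a priori} estimates that are uniform in the approximation parameter, first a low-regularity entropy--energy estimate and then, by bootstrapping, the $H^1\times H^2\times\mathbb H^1$ bounds announced in the statement; (iii) extract a limit by compactness, verify that it solves \eqref{iden}, and finally establish uniqueness by an energy argument on the difference of two solutions. For the approximation I would use a Galerkin scheme based on the invading domains $\{\mathcal O_d\}$ (or a spectral truncation together with a Friedrichs-type mollification of the quadratic terms), solving the resulting finite-dimensional ODE system and continuing each approximate solution to all of $[0,T]$ via the bounds below; the divergence-free constraint is imposed through the Leray projection, so that the pressure $P$ is eliminated. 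The analytical heart is step (ii), since the global-in-time, large-data nature of the result forbids any perturbative or smallness closure.

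The first block of estimates is dictated by the consumption sign of the chemical. Mass conservation gives $\|n(t)\|_{L^1}=\|n_0\|_{L^1}$; testing the $c$-equation and using $\nabla\cdot u=0$ with $n,c>0$ yields, by the maximum principle, $0<c(t)\le\|c_0\|_{L^\infty}$ together with $\|c(t)\|_{L^1}\le\|c_0\|_{L^1}$; and the fluid energy obeys
\begin{equation*}
\tfrac12\tfrac{d}{dt}\|u\|_{L^2}^2+\|\nabla u\|_{L^2}^2=\int_{\mathbb R^2}n\,\nabla\phi\cdot u\,\mathrm{d}x,
\end{equation*}
whose right-hand side is controlled using $\nabla\phi\in L^\infty$ and the mass bound after interpolation. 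The crucial step is the coupled entropy--energy functional
\begin{equation*}
\mathcal F(t):=\int_{\mathbb R^2}n\log n\,\mathrm{d}x+2\int_{\mathbb R^2}|\nabla\sqrt c|^2\,\mathrm{d}x+\tfrac12\|u\|_{L^2}^2,
\end{equation*}
which is exactly compatible with the hypotheses $n_0\in L^1\cap H^1$ and $|\nabla\sqrt{c_0}|\in L^2$. Differentiating in time and integrating by parts, the advection terms cancel by incompressibility, and the cross term $-\int n\,\Delta c$ generated by the $n\log n$ part combines with the dissipation of $\int|\nabla\sqrt c|^2$ to control (up to a suitable choice of constants)
\begin{equation*}
\int_{\mathbb R^2}\frac{|\nabla n|^2}{n}\,\mathrm{d}x,\qquad \int_{\mathbb R^2}c\,|D^2\log c|^2\,\mathrm{d}x,\qquad \|\nabla u\|_{L^2}^2,
\end{equation*}
modulo lower-order remainders. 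Closing this inequality in $\mathbb R^2$ is where the main difficulty lies: no Poincaré inequality is available, $n\log n$ is not sign-definite, and the remainders sit precisely at the critical two-dimensional scaling. I would recover the coercivity of $\mathcal F$ by combining the $L^1$ mass bound with the Gagliardo--Nirenberg/Ladyzhenskaya inequalities, and absorb the critical quadratic remainders using the two-dimensional logarithmic Sobolev inequality, thereby obtaining uniform bounds for $n$ in $L^\infty(0,T;L\log L(\mathbb R^2))$, for $\nabla\sqrt c$ in $L^\infty(0,T;L^2)$, and for $u$ in $L^\infty(0,T;L^2)\cap L^2(0,T;\mathbb H^1)$.

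With these low-regularity bounds in hand, the regularity is upgraded by a bootstrap exploiting the parabolic smoothing in each equation. Testing the $n$-equation with $-\Delta n$, the $c$-equation with $\Delta^2 c$, and the projected $u$-equation with $-\Delta u$, one obtains differential inequalities for $\|\nabla n\|_{L^2}^2$, $\|\Delta c\|_{L^2}^2$ and $\|\nabla u\|_{L^2}^2$ whose dissipative terms furnish the $L^2(0,T;H^2)$, $L^2(0,T;H^3)$ and $L^2(0,T;\mathbb H^2)$ norms respectively; the nonlinear couplings $u\cdot\nabla n$, $\nabla\cdot(n\nabla c)$, $nc$, $(u\cdot\nabla)u$ and $n\nabla\phi$ are estimated by Gagliardo--Nirenberg interpolation and fed back through the already-established lower-order controls, so that Gronwall's lemma closes each estimate on an arbitrary $[0,T]$. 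The order of the bootstrap matters: the $H^2$ bound on $c$ is what renders the cross-diffusion term of the $n$-equation tractable.

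Finally, the uniform bounds yield, via an Aubin--Lions--Simon argument adapted to the locally convex spaces $\mathcal C([0,T];Q_w)$, $L^2_w(0,T;Q)$ and $L^2(0,T;H^k_{loc}(\mathbb R^2))$ introduced in Subsection~\ref{nnnn}, a subsequence converging strongly in $L^2_{loc}$; this strong local convergence suffices to pass to the limit in every nonlinear term and to recover the integral identities \eqref{iden}, while weak lower semicontinuity supplies the asserted regularity class. Uniqueness is then proved by writing the system for the difference of two solutions with identical data, testing with the differences, controlling the resulting trilinear terms by the $H^1$, $H^2$, $\mathbb H^1$ bounds already available, and invoking Gronwall to force the difference to vanish. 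I expect the genuine obstacle to be, as noted, the closure of the entropy--energy estimate at critical two-dimensional scaling for arbitrarily large data; once that is secured, the bootstrap and the limiting procedure are technically laborious but structurally routine.
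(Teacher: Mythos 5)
Your overall architecture---mollified/truncated approximation, a low-order entropy--energy estimate, a parabolic bootstrap up to $H^1\times H^2\times\mathbb{H}^1$, compactness in the weak/locally-strong topologies of Subsection~\ref{nnnn}, and Gronwall-based uniqueness for the difference of two solutions---is exactly the paper's scheme (the paper implements the approximation as a Friedrichs mollification \eqref{Mod-1} plus a frequency truncation $J_k$, solving an ODE in $\mathbf{H}^s$ rather than a Galerkin scheme on invading domains, but that choice is interchangeable; your bootstrap order, $c$ before $n$ with $u$ feeding both, and your uniqueness argument also match Lemmas~\ref{5pro} and Subsection~\ref{sec3-2}).

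There is, however, a genuine gap at precisely the step you flag as the main obstacle: the functional $\int_{\mathbb{R}^2} n\log n\,\mathrm{d}x$ is the wrong entropy in the whole plane under the stated hypotheses. Its negative part is a tail quantity that none of your proposed tools controls: there exist $n_0>0$ with $n_0\in L^1\cap H^1(\mathbb{R}^2)$ but $\int_{\mathbb{R}^2} n_0\log^-n_0\,\mathrm{d}x=\infty$ (take $n_0\sim|x|^{-2}(\log|x|)^{-2}$ at infinity), so your $\mathcal{F}(0)=-\infty$ and the Gronwall argument cannot even start. Nor can coercivity be ``recovered'' along the flow: mass plus Fisher information do not dominate $\int n\log^- n$ (consider a mollified $n=\epsilon\mathbf{1}_{B_R}$ with $\epsilon\pi R^2=M$ fixed and $\epsilon\to0$; then $\|n\|_{L^1}$ and $\|\nabla\sqrt{n}\|_{L^2}$ stay bounded while $\int n\log(1/n)\approx M\log(1/\epsilon)\to\infty$), and the two-dimensional logarithmic Sobolev inequality requires a confining reference measure or second moments. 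The standard repair in $\mathbb{R}^2$ is to propagate $\int n|x|^2\,\mathrm{d}x$, but finiteness of that moment is not assumed on $n_0$ in Theorem~\ref{th1}. The paper sidesteps all of this in Lemma~\ref{5lem} by working with $\|(n^{\epsilon}+1)\ln(n^{\epsilon}+1)\|_{L^1}$: this is pointwise non-negative, finite for $L^1\cap L^2$ data, equivalent to $\|n^\epsilon\|_{L^1\cap L\ln L}$, and enjoys the same differential structure---the chemotactic cross term becomes $\int\nabla n^\epsilon\cdot\nabla(c^\epsilon*\rho^\epsilon)+\int\Delta(c^\epsilon*\rho^\epsilon)\ln(n^\epsilon+1)$, where the logarithmic factor is harmless since $\|\ln(n^\epsilon+1)\|_{L^2}^2\lesssim\|(n^\epsilon+1)\ln(n^\epsilon+1)\|_{L^1}$, whereas $\log n$ is unbounded below as $n\to0$. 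Substituting this shifted entropy for yours (and otherwise keeping your plan, including the estimate $\|n^\epsilon\|_{L^2}\lesssim(1+\|\nabla\sqrt{n^\epsilon+1}\|_{L^2})\|n^\epsilon\|_{L^1}^{1/2}$ used to close the fluid energy) repairs the argument and brings it into line with the paper's proof.
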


\begin{theorem}\label{th2}
Let $s \geq 2$ be an integer. Suppose the initial data $(n_0, c_0, u_0)$ and the potential function $\phi$ satisfy
\begin{equation*}
\begin{split}
&n_0 \in L^1(\mathbb{R}^2)\cap H^{s}(\mathbb{R}^2),~n_0>0;\quad c_0 \in L^1(\mathbb{R}^2)\cap H^{s+1}(\mathbb{R}^2),~|\nabla \sqrt{c_0}|\in L^2(\mathbb{R}^2),~c_0>0;\\
&u_0 \in \mathbb{H}^{s};\quad\phi\in H^{s+1}(\mathbb{R}^2).
 \end{split}
\end{equation*}
Then system \eqref{CNS0} exists a unique global solution
\begin{equation*}
\begin{split}
&n\in\mathcal{C}([0,T];H^s(\mathbb{R}^2))\cap L^2(0,T;H^{s+1}(\mathbb{R}^2));\quad c\in\mathcal{C}([0,T];H^{s+1}(\mathbb{R}^2))\cap L^2(0,T;H^{s+2}(\mathbb{R}^2));\\
&u\in\mathcal{C}([0,T];\mathbb{H}^s)\cap L^2(0,T;\mathbb{H}^{s+1}).
\end{split}
\end{equation*}
In particular, if $s=4$, then system \eqref{CNS0} possesses a unique global classical solution
\begin{equation*}
\begin{split}
&n\in\mathcal{C}([0,T];\mathcal{C}^2(\mathbb{R}^2))\cap \mathcal{C}^1([0,T];\mathcal{C}(\mathbb{R}^2));\quad c\in\mathcal{C}([0,T];\mathcal{C}^3(\mathbb{R}^2))\cap \mathcal{C}^1([0,T];\mathcal{C}^2(\mathbb{R}^2));\\
&u\in\mathcal{C}([0,T];\mathcal{C}^2(\mathbb{R}^2;\mathbb{R}^2))\cap \mathcal{C}^1([0,T];\mathcal{C}(\mathbb{R}^2;\mathbb{R}^2)).
\end{split}
\end{equation*}
If $s=4+2m,~m\in\mathbb{N}^+$, then system \eqref{CNS0} admits a unique global smooth solution
\begin{equation*}
\begin{split}
&n\in\bigcap_{k=0}^{m+1}\mathcal{C}^k([0,T];\mathcal{C}^{2(m+1-k)}(\mathbb{R}^2));\quad c\in\bigcap_{k=0}^{m+1}\mathcal{C}^k([0,T];\mathcal{C}^{2(m+1-k)+1}(\mathbb{R}^2));\\
&u\in\bigcap_{k=0}^{m+1}\mathcal{C}^k([0,T];\mathcal{C}^{2(m+1-k)}(\mathbb{R}^2;\mathbb{R}^2)).
\end{split}
\end{equation*}
\end{theorem}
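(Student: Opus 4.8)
The plan is to bootstrap from the global strong solution supplied by Theorem~\ref{th1}, which already furnishes the base regularity level (corresponding to $s=1$) together with the globally-in-time entropy-energy bounds on the low-order norms of $(n,c,u)$. Because any solution with the regularity asserted here is in particular a strong solution in the sense of Theorem~\ref{th1}, both the existence of a strong solution and its uniqueness are inherited, and the entire burden reduces to propagating the higher regularity by an induction on $s$ in which each step gains one spatial derivative and in which the parabolic dissipation absorbs the top-order nonlinear contributions. Throughout I would exploit the maximum-principle bounds $n>0$ and $0<c\le\|c_0\|_{L^\infty}$ (the latter because $c$ obeys a drift-diffusion inequality with nonpositive reaction $-nc$), which keep the lower-order quantities controlled.

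For the inductive step, I would assume the solution satisfies the claimed regularity at level $s-1$ and then test the $\Lambda^s$-differentiated $n$-equation against $\Lambda^s n$, the $\Lambda^{s+1}$-differentiated $c$-equation against $\Lambda^{s+1}c$, and the $\Lambda^s$-differentiated $u$-equation against $\Lambda^s u$. The transport terms $u\cdot\nabla n$, $u\cdot\nabla c$ and $(u\cdot\nabla)u$ would be treated by Kato--Ponce commutator estimates, the divergence-free condition $\nabla\cdot u=0$ eliminating the genuinely top-order pieces after integration by parts. Since $s\ge 2$ in $\mathbb{R}^2$, the space $H^s(\mathbb{R}^2)$ is a Banach algebra, so the buoyancy term obeys $\|n\nabla\phi\|_{H^s}\lesssim\|n\|_{H^s}\|\phi\|_{H^{s+1}}$ and the reaction term is handled by the tame estimate $\|nc\|_{H^{s+1}}\lesssim\|c_0\|_{L^\infty}\|n\|_{H^{s+1}}+\|n\|_{H^s}\|c\|_{H^{s+1}}$. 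Collecting the diffusive contributions yields on the left the dissipation $\mathcal{D}_s:=\|n\|_{H^{s+1}}^2+\|c\|_{H^{s+2}}^2+\|u\|_{H^{s+1}}^2$, and the objective is to show the nonlinear right-hand side is dominated by $\mathcal{D}_s$ plus a Grönwall-integrable multiple of the top-order energy $E_s:=\|n\|_{H^s}^2+\|c\|_{H^{s+1}}^2+\|u\|_{H^s}^2$.

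The hard part will be the chemotactic coupling $\nabla\cdot(n\nabla c)$ tested against $\Lambda^s n$: the product rule produces a truly top-order term controlled by $\|\nabla c\|_{H^s}=\|c\|_{H^{s+1}}$ times $\|n\|_{H^{s+1}}$, and it is exactly here that the one-derivative gap built into the hierarchy ($c$ at level $s+1$, $n$ at level $s$) is indispensable, since Young's inequality then splits this term into a small fraction of the $n$-dissipation $\|n\|_{H^{s+1}}^2$ and a fraction of the $c$-dissipation $\|c\|_{H^{s+2}}^2$, both absorbed on the left, leaving only lower-order factors times $E_s$. The remaining cross-terms are strictly subcritical and I would bound them by two-dimensional Gagliardo--Nirenberg interpolation together with the level-$(s-1)$ bounds, after which one arrives at an inequality of the form $\tfrac{d}{dt}E_s+\mathcal{D}_s\lesssim \Phi(t)\,E_s$, where $\Phi\in L^1(0,T)$ is a polynomial in lower-order norms that are globally controlled through Theorem~\ref{th1} and the induction hypothesis. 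Grönwall then closes the estimate on every $[0,T]$, and feeding this a priori bound through the approximation scheme of Theorem~\ref{th1} (or bootstrapping the already-existing strong solution) produces the solution at level $s$; uniqueness is again inherited from Theorem~\ref{th1}.

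Finally, the classical and smooth statements follow from Sobolev embedding and the equations. In $\mathbb{R}^2$ one has $H^\sigma(\mathbb{R}^2)\hookrightarrow\mathcal{C}^{\sigma-2}(\mathbb{R}^2)$ for integer $\sigma\ge 2$, so at $s=4$ the spatial regularities $n,u\in H^4$ and $c\in H^5$ give $n,u\in\mathcal{C}^2$ and $c\in\mathcal{C}^3$; reading $\partial_t n,\partial_t c,\partial_t u$ off the equations and observing that each right-hand side lies in the appropriate continuous class upgrades this to $\mathcal{C}^1$-in-time regularity, i.e.\ a classical solution. For $s=4+2m$ I would iterate this device: because the system is parabolic, each additional time derivative costs two spatial derivatives, so differentiating the equations $k$ times in $t$ and invoking the level-$(s-2k)$ spatial bounds places $\partial_t^k(n,c,u)$ in $H^{s-2k}\hookrightarrow\mathcal{C}^{2(m+1-k)}$ (with the one-derivative shift for $c$), which is precisely the asserted $\bigcap_{k=0}^{m+1}\mathcal{C}^k([0,T];\mathcal{C}^{2(m+1-k)})$ smoothness.
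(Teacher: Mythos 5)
Your proposal is correct, and its analytic core --- testing the $\Lambda^s$-differentiated equations, exploiting the one-derivative gap ($c$ at level $s+1$ against $n,u$ at level $s$) through Moser/tame estimates, and closing with Gronwall --- is the same mechanism as the paper's Proposition~\ref{6pro}; the classical and smooth cases are likewise handled, as in Corollary~\ref{cor2}, by the embedding $H^\sigma(\mathbb{R}^2)\hookrightarrow \mathcal{C}^{\sigma-2}(\mathbb{R}^2)$ and by reading time derivatives off the equations at the cost of two spatial derivatives each. Three differences are worth recording. (i) You organize the estimate as an induction on $s$, but no induction is needed: in \eqref{4lem-2}--\eqref{4lem-5} the Gronwall coefficient $\mathbf{A}_1(t)$ involves only the $H^2/H^3/H^2$ norms of $(n^\epsilon,c^\epsilon,u^\epsilon)$, whose time-integrability is already supplied by the strong-solution bounds of Lemma~\ref{5pro}, so the level-$s$ bound is obtained in one shot for every $s\ge 2$ and your level-$(s-1)$ hypothesis is never actually used; relatedly, your ``hard'' term $\|c\|_{H^{s+1}}\|n\|_{H^{s+1}}$ splits by Young into $\eta\|n\|_{H^{s+1}}^2$ plus a bounded multiple of the \emph{energy} term $\|c\|_{H^{s+1}}^2$, so no absorption into the $c$-dissipation $\|c\|_{H^{s+2}}^2$ is required. (ii) Your inheritance of uniqueness from Theorem~\ref{th1} is legitimate (any solution in the asserted class is a strong solution in the sense of Theorem~\ref{th1}, and $\nabla\phi\in H^s\hookrightarrow L^\infty$ under the hypotheses of Theorem~\ref{th2}) and is genuinely more economical than the paper's route, Proposition~\ref{Pro}, which reproves uniqueness via $H^s$-level difference estimates. (iii) A point of rigor: the a priori computation cannot be run directly on the strong solution of Theorem~\ref{th1}, which lacks the smoothness to justify $\Lambda^s$-testing for large $s$; it must be performed on the smooth solutions of the regularized system \eqref{Mod-1} and passed through the limit $\epsilon\to 0$, which is exactly what the paper does in Proposition~\ref{6pro} and Corollary~\ref{cor2} --- of your two suggested implementations, only ``feeding the bound through the approximation scheme'' is complete as stated, while ``bootstrapping the already-existing strong solution'' would need additional justification (difference quotients or re-mollification).
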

\begin{remark}
Theorems~\ref{th1} and~\ref{th2} demonstrate that the CNS system~\eqref{CNS0} admits a unique global strong, classical, and even smooth solution in $\mathbb{R}^2$ under suitably large initial data. These results refine and extend the global well-posedness theory for the CNS system established by Zhang and Zheng~\cite[Theorem~1]{zhang2014global} and Liu and Lorz~\cite[Theorem~2.1]{liu2011coupled}, which concerned only the existence and uniqueness of global weak solutions.
\end{remark}

\begin{remark}
The core idea behind the proof is as follows. Under initial data of low regularity, system~\eqref{CNS0} possesses a special entropy structure that enables the derivation of uniform low-order entropy-energy estimates (cf. Lemma~\ref{5lem}). As the regularity of the initial data improves, the parabolic nature of system~\eqref{CNS0} becomes dominant. In particular, the low-order entropy-energy estimate facilitates the derivation of the energy bound $\|n^{\epsilon}(t)\|_{H^1}^2 + \|c^{\epsilon}(t)\|_{H^2}^2 + \|u^{\epsilon}(t)\|_{H^1}^2$ (cf. Lemma~\ref{5pro}), which, combined with weak compactness arguments, guarantees the global existence of strong solutions. With further enhancement in the regularity of the initial data, the intrinsic parabolicity of the CNS system, together with the structure of the cross-diffusion term, allows us to establish uniform energy estimates for $\|n^{\epsilon}(t)\|_{H^s}^2 + \|c^{\epsilon}(t)\|_{H^{s+1}}^2 + \|u^{\epsilon}(t)\|_{H^s}^2$, for any $s\geq2$ (cf. Proposition~\ref{6pro}), which in turn yields the global existence of classical, and even arbitrarily smooth, solutions to system~\eqref{CNS0} (cf. Corollary~\ref{cor2}).
\end{remark}

\subsection{Structure of the paper}
Section~\ref{sec2} is devoted to establishing the existence and uniqueness of sufficiently smooth solutions to the regularized problem~\eqref{Mod-1}. The proofs of Theorems~\ref{th1} and~\ref{th2} are given in Sections~\ref{sec3} and~\ref{sec4}, respectively.

\section{Solvability of the modified systems}\label{sec2}
This section is devoted to the construction of an approximate system corresponding to \eqref{CNS0}. Let $\rho^\epsilon$ be a standard Friedrichs mollifier (cf.~\cite{majda2002vorticity}). We consider the following regularized system:
\begin{equation}\label{Mod-1}
\left\{
\begin{aligned}
&n^\epsilon_t + u^\epsilon \cdot \nabla n^\epsilon = \Delta n^\epsilon - \nabla \cdot \left(n^\epsilon \nabla (c^\epsilon * \rho^\epsilon) \right), \\
&c^\epsilon_t + u^\epsilon \cdot \nabla c^\epsilon = \Delta c^\epsilon - c^\epsilon (n^\epsilon * \rho^\epsilon), \\
&u^\epsilon_t + \mathbf{P}[(u^\epsilon \cdot \nabla) u^\epsilon] = - A u^\epsilon + \mathbf{P}[(n^\epsilon \nabla \phi) * \rho^\epsilon], \\
&\nabla \cdot u^\epsilon = 0, \\
&n^\epsilon|_{t=0} = n_0^\epsilon, \quad c^\epsilon|_{t=0} = c_0^\epsilon, \quad u^\epsilon|_{t=0} = u_0^\epsilon,
\end{aligned}
\right.
\end{equation}
where the initial data are smoothed by convolution with the mollifier:
\[
n_0^\epsilon = n_0 * \rho^\epsilon, \quad c_0^\epsilon = c_0 * \rho^\epsilon, \quad u_0^\epsilon = u_0 * \rho^\epsilon.
\]
Here, $A := -\mathbf{P} \Delta$ denotes the Stokes operator, where $\mathbf{P}$ is the Helmholtz-Leray projection from $L^2(\mathbb{R}^2;\mathbb{R}^2)$ onto the divergence-free subspace $\mathbb{H}^0$.

Let $J_k$ be the frequency truncation operator defined by
\[
\widehat{J_k f}(\xi) := \mathbf{1}_{\mathcal{C}_k}(\xi) \widehat{f}(\xi), \quad \mathcal{C}_k := \left\{\xi \in \mathbb{R}^2 \; ; \; k^{-1} \leq |\xi| \leq k \right\}, \quad k \in \mathbb{N}^+.
\]
It is well known that the operators $\mathbf{P}$, the Bessel potential operator $\Lambda^s$, and the frequency truncation operator $J_k$ commute pairwise. Moreover, the following equality holds:
$$
(\textbf{P}f,g)_{H^m}=(f,\textbf{P}g)_{H^m}=(f,g)_{H^m},\quad f\in H^m(\mathbb{R}^2;\mathbb{R}^2),~ g\in \mathbb{H}^m,~ m\in\mathbb{N}.
$$

We aim to prove that the regularized system \eqref{Mod-1} admits a unique sufficiently smooth solution. To this end, we consider the following second-level approximation:
\begin{equation}\label{mod-22}
\left\{
\begin{aligned}
&n^{k,\epsilon}_t + J_k(J_k u^{k,\epsilon} \cdot \nabla J_k n^{k,\epsilon}) = \Delta J_k^2 n^{k,\epsilon} - J_k( \nabla \cdot (J_k n^{k,\epsilon} \nabla (c^{k,\epsilon} * \rho^\epsilon)) ) ,\\
&c^{k,\epsilon}_t + J_k(J_k u^{k,\epsilon} \cdot \nabla J_k c^{k,\epsilon}) = \Delta J_k^2 c^{k,\epsilon} - J_k( J_k c^{k,\epsilon} (n^{k,\epsilon} * \rho^\epsilon)), \\
&u^{k,\epsilon}_t + J_k \mathbf{P} [ (J_k u^{k,\epsilon} \cdot \nabla) J_k u^{k,\epsilon} ] = - A J_k^2 u^{k,\epsilon} + J_k \mathbf{P} [ (n^{k,\epsilon} \nabla \phi) * \rho^\epsilon ], \\
&\nabla \cdot u^{k,\epsilon} = 0, \\
&n^{k,\epsilon}|_{t=0} = n_0^\epsilon, \quad c^{k,\epsilon}|_{t=0} =c_0^\epsilon, \quad u^{k,\epsilon}|_{t=0} = u_0^\epsilon.
\end{aligned}
\right.
\end{equation}
System \eqref{mod-22} can be expressed as an ODE in the work space $\textbf{H}^s(\mathbb{R}^2):=H^s(\mathbb{R}^2)\times H^s(\mathbb{R}^2) \times \mathbb{H}^s$:
\begin{equation}\label{Mod-22}
\left\{
\begin{aligned}
& \frac{\mathrm{d}}{\mathrm{d}t}  \textbf{y}^{k,\epsilon}= F ^{k,\epsilon} (\textbf{y}^{k,\epsilon}  ) ,\\
& \textbf{y}^{k,\epsilon}(0) =\textbf{y}^{\epsilon}_0= (n_0^\epsilon,c_0^\epsilon,u_0^\epsilon)^{\top},
\end{aligned}
\right.
\end{equation}
where $\textbf{y}^{k,\epsilon}:=(n^{k,\epsilon} , c^{k,\epsilon},
u^{k,\epsilon})^{\top}$ and $F^{k,\epsilon}(\cdot):=(F_1^{k,\epsilon} (\cdot),F_2^{k,\epsilon} (\cdot),F_3^{k,\epsilon}(\cdot))^{\top}$. The functionals $F_i^{k,\epsilon}(\cdot)$ are given by
\begin{equation*}
\begin{split}
 F_1^{k,\epsilon}(\textbf{y}^{k,\epsilon} ) =& \Delta J_k^2n ^{k,\epsilon} -  J_k(\nabla\cdot(J_kn^{k,\epsilon}\nabla (c^{k,\epsilon}*\rho^\epsilon)))-J_k(J_ku^{k,\epsilon}\cdot \nabla J_kn^{k,\epsilon}),\\
F_2^{k,\epsilon}(\textbf{y}^{k,\epsilon} )= &\Delta J_k^2c^{k,\epsilon}-J_k(J_kc^{k,\epsilon}(n^{k,\epsilon}*\rho^{\epsilon}))-J_k(J_ku^{k,\epsilon}\cdot \nabla J_kc^{k,\epsilon}),\\
F_3^{k,\epsilon}(\textbf{y}^{k,\epsilon} )= &-A J_k^2u^{k,\epsilon}+J_k\textbf{P}[ (n^{k,\epsilon}\nabla \phi)*\rho^\epsilon]-J_k\textbf{P} [(J_ku^{k,\epsilon}\cdot \nabla) J_ku^{k,\epsilon}].
\end{split}
\end{equation*}

\begin{lemma}\label{lem1}
Let $s \geq 100$ be an integer, $0 < \epsilon < 1$, and $k \in \mathbb{N}^+$. Under the same assumptions as in Theorem~\ref{th1}, the system \eqref{Mod-22} admits a unique solution
$$
(n^{k,\epsilon}, c^{k,\epsilon}, u^{k,\epsilon}) \in \mathcal{C}^1([0,T]; \textbf{H}^s(\mathbb{R}^2)).
$$
\end{lemma}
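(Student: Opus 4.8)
The plan is to treat \eqref{Mod-22} as an abstract ODE $\frac{\mathrm{d}}{\mathrm{d}t}\textbf{y}^{k,\epsilon}=F^{k,\epsilon}(\textbf{y}^{k,\epsilon})$ in the Hilbert space $\textbf{H}^s(\mathbb{R}^2)$ and to apply the Cauchy--Lipschitz (Picard--Lindel\"of) theorem for ODEs in Banach spaces. The argument splits into two parts: first show that $F^{k,\epsilon}$ is locally Lipschitz (indeed a polynomial map) on $\textbf{H}^s$, which yields a unique maximal $\mathcal{C}^1$ solution on some interval $[0,T_{\max})$; then derive an a priori bound on $\|\textbf{y}^{k,\epsilon}(t)\|_{\textbf{H}^s}$ that forces $T_{\max}>T$.

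For the first part, the key point is that the twin regularizations---the frequency truncation $J_k$ and the mollifier $\rho^\epsilon$---turn every term of $F^{k,\epsilon}$ into a bounded multilinear operator on $\textbf{H}^s$. I would record three facts: (i) $J_k$ is a self-adjoint idempotent ($J_k^2=J_k$) whose Fourier support lies in $\mathcal{C}_k$, so $\|J_k f\|_{H^{s'}}\le (1+k^2)^{s'/2}\|f\|_{L^2}$ for every $s'$; (ii) convolution with $\rho^\epsilon$ satisfies $\|f*\rho^\epsilon\|_{H^{s'}}\le C(\epsilon,s')\|f\|_{L^2}$; (iii) $H^s(\mathbb{R}^2)$ is a Banach algebra for $s>1$, which holds since $s\ge 100$, while $\mathbf{P}$ and $\Lambda^s$ are bounded Fourier multipliers commuting with $J_k$. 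Then $\Delta J_k^2$ and $AJ_k^2$ are bounded on $H^s$ (resp.\ $\mathbb{H}^s$); the transport, cross-diffusion and reaction terms are bilinear and bounded via the algebra property together with (i)--(ii); and the buoyancy term $J_k\mathbf{P}[(n\nabla\phi)*\rho^\epsilon]$ is affine in $n$ and bounded by (ii) and $\nabla\phi\in L^\infty$. Estimating $F^{k,\epsilon}(\textbf{y})-F^{k,\epsilon}(\tilde{\textbf{y}})$ and using bilinearity gives local Lipschitz continuity on bounded sets, so Cauchy--Lipschitz applies.

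For the second part, I would test the three equations against $n^{k,\epsilon},c^{k,\epsilon},u^{k,\epsilon}$ in $L^2$. The diffusive terms yield the favourable contributions $-\|\nabla J_k n^{k,\epsilon}\|_{L^2}^2$, etc.\ (using $J_k^2=J_k$ and self-adjointness, together with $\mathbf{P}u^{k,\epsilon}=u^{k,\epsilon}$), and all three transport terms vanish exactly because $\nabla\cdot J_k u^{k,\epsilon}=0$ and $J_k$ is self-adjoint. Moreover, since every right-hand side lies in the range of $J_k$, the high-frequency part is frozen, $(I-J_k)\textbf{y}^{k,\epsilon}(t)\equiv(I-J_k)\textbf{y}_0^\epsilon$; hence on $\mathrm{Range}(J_k)$ all Sobolev norms are comparable to the $L^2$ norm up to $k$-dependent constants, and the blow-up alternative $\|\textbf{y}^{k,\epsilon}\|_{\textbf{H}^s}\to\infty$ reduces to an $L^2$ bound. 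It then remains to control the remaining nonlinear terms by the dissipation and the mollifier bounds so as to close a Gronwall-type inequality for $\|\textbf{y}^{k,\epsilon}\|_{L^2}^2$ on $[0,T]$; the $\mathcal{C}^1$ regularity follows since $t\mapsto F^{k,\epsilon}(\textbf{y}^{k,\epsilon}(t))$ is continuous.

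I expect the main obstacle to be precisely this a priori estimate. The chemotactic cross-diffusion $J_k\nabla\cdot(J_k n^{k,\epsilon}\,\nabla(c^{k,\epsilon}*\rho^\epsilon))$ and the reaction $J_k(J_k c^{k,\epsilon}\,(n^{k,\epsilon}*\rho^\epsilon))$ are genuinely non-conservative, and after integration by parts they produce coupling terms whose coefficients depend on $\|c^{k,\epsilon}\|_{L^2}$ and $\|n^{k,\epsilon}\|_{L^2}$, so that crude estimates give a super-linear (Riccati-type) differential inequality which a priori permits finite-time blow-up. The crux is therefore to exploit the parabolic dissipation---and, where helpful, the conservation relations $\int n^{k,\epsilon}=\int n_0^\epsilon$, $\int c^{k,\epsilon}=\int c_0^\epsilon$ (valid because $0\notin\mathcal{C}_k$) together with the band-limited structure---in order to absorb these couplings and obtain at most exponential growth, so that the local solution extends to the prescribed interval $[0,T]$.
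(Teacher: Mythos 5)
Your framework coincides with the paper's: treat \eqref{Mod-22} as an ODE in $\textbf{H}^s(\mathbb{R}^2)$, prove local Lipschitz continuity of $F^{k,\epsilon}$ (your items (i)--(iii) are exactly the Bernstein/mollifier/algebra bounds the paper uses), apply Picard, and then extend globally via an a priori bound plus the continuation criterion. Your observation that $(I-J_k)\textbf{y}^{k,\epsilon}$ is frozen, so that the $\textbf{H}^s$ blow-up alternative reduces to an $L^2$ bound, is correct and is a clean substitute for the paper's direct estimate $\frac{\mathrm{d}}{\mathrm{d}t}\|\textbf{y}^{k,\epsilon}\|_{\textbf{H}^s}^2 \lesssim C_{\|\textbf{y}^{k,\epsilon}\|_{\textbf{H}^0}}\|\textbf{y}^{k,\epsilon}\|_{\textbf{H}^s}^2$. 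However, there is a genuine gap: the $L^2$ bound itself, which you yourself flag as ``the crux,'' is never established. You correctly diagnose that crude estimates yield a Riccati-type system --- indeed, setting $X=\|n^{k,\epsilon}\|_{L^2}^2$, $Y=\|c^{k,\epsilon}\|_{L^2}^2$, the cross-diffusion gives $X'\lesssim_\epsilon XY$ and the reaction gives $Y'\lesssim_\epsilon X^{1/2}Y$, so $W:=X^{1/2}+Y$ satisfies $W'\lesssim W^2$, which permits blow-up before time $T$ --- but the tools you propose cannot repair this. Parabolic dissipation does not help because there is no Poincar\'e inequality on $\mathbb{R}^2$ (on $\mathrm{Range}(J_k)$ one only gets $\|\nabla J_k f\|_{L^2}\geq k^{-1}\|J_k f\|_{L^2}$, whose constant cannot beat a coefficient growing like $\|c^{k,\epsilon}\|_{L^2}$), and the conservation of $\int n^{k,\epsilon}$ is useless without a sign condition, since it does not control $\|n^{k,\epsilon}\|_{L^1}$ when $n^{k,\epsilon}$ may change sign.

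The missing idea, which is the heart of the paper's proof, is the preserved non-negativity $n^{k,\epsilon}(t,x)\geq 0$ (shown by standard arguments, e.g.\ testing with $n^{k,\epsilon}_-$). Since $\rho^\epsilon\geq 0$, this gives $n^{k,\epsilon}*\rho^\epsilon\geq 0$, so the reaction term in the $c$-equation has a favorable sign:
\begin{equation*}
\frac{1}{2}\frac{\mathrm{d}}{\mathrm{d}t}\|c^{k,\epsilon}\|_{L^2}^2+\|\nabla J_k c^{k,\epsilon}\|_{L^2}^2
=-\int_{\mathbb{R}^2}|J_k c^{k,\epsilon}(x)|^2 (n^{k,\epsilon}*\rho^\epsilon)(x)\,\mathrm{d}x\leq 0,
\end{equation*}
and the $c$-estimate closes by itself, giving $\|c^{k,\epsilon}(t)\|_{L^2}\leq\|c_0^\epsilon\|_{L^2}$. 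This breaks the Riccati coupling: the $n$-equation inequality $\frac{\mathrm{d}}{\mathrm{d}t}\|n^{k,\epsilon}\|_{L^2}^2\lesssim C_\epsilon\|c^{k,\epsilon}\|_{L^2}^2\|n^{k,\epsilon}\|_{L^2}^2$ now has a \emph{bounded} coefficient, so Gronwall applies, and the $u$-estimate then follows from the bound on $\|n^{k,\epsilon}\|_{L^2}$. Without this cascading structure ($c$ first via positivity of $n$, then $n$, then $u$), your extension argument cannot be completed, so the proposal as written does not prove the lemma.
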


\begin{proof}[\emph{\textbf{Proof}}]
Actually, the functional $F^{k,\epsilon}(\cdot): \textbf{H}^s(\mathbb{R}^2) \to \textbf{H}^s(\mathbb{R}^2)$ is locally Lipschitz continuous. That is, for any given $M > 0$,
\begin{equation}\label{2.3}
\|F^{k,\epsilon}(\textbf{y}_1^{k,\epsilon}) - F^{k,\epsilon}(\textbf{y}_2^{k,\epsilon})\|_{\textbf{H}^s} \lesssim_{M,k,s,\phi,\epsilon} \|\textbf{y}_1^{k,\epsilon} - \textbf{y}_2^{k,\epsilon}\|_{\textbf{H}^s},
\end{equation}
where $\|\textbf{y}_1^{k,\epsilon}\|_{\textbf{H}^s} \leq M$ and $\|\textbf{y}_2^{k,\epsilon}\|_{\textbf{H}^s} \leq M$. As an illustration, we present the estimate for $F^{k,\epsilon}_3(\cdot)$, noting that the arguments for $F^{k,\epsilon}_1(\cdot)$ and $F^{k,\epsilon}_2(\cdot)$ are similar. Since $\operatorname{supp} \widehat{J_k f} \subset \{\xi \in \mathbb{R}^2 : k^{-1} \leq |\xi| \leq k\}$ for $k \geq 1$, it follows from the Bernstein inequality~\cite{bahouri2011fourier} and the Sobolev embedding $H^2(\mathbb{R}^2) \hookrightarrow L^{\infty}(\mathbb{R}^2)$ that
\begin{equation*}
\begin{split}
&\|F^{k,\epsilon}_3(\textbf{y}_1^{k,\epsilon}) - F^{k,\epsilon}_3(\textbf{y}_2^{k,\epsilon})\|_{\textbf{H}^s} \\
&\lesssim \|J_k^2 \Delta (u^{k,\epsilon}_1 - u^{k,\epsilon}_2)\|_{H^s} + \|J_k[((n^{k,\epsilon}_1 - n^{k,\epsilon}_2)\nabla \phi) * \rho^\epsilon]\|_{H^s} \\
&\quad + \|J_k[(J_k u^{k,\epsilon}_1 \cdot \nabla) J_k u^{k,\epsilon}_1 - (J_k u^{k,\epsilon}_2 \cdot \nabla) J_k u^{k,\epsilon}_2]\|_{H^s} \\
&\lesssim C_{k,s} \|u^{k,\epsilon}_1 - u^{k,\epsilon}_2\|_{H^s} + C_{k,s} \|\nabla \phi\|_{L^{\infty}} \|n^{k,\epsilon}_1 - n^{k,\epsilon}_2\|_{L^2} \\
&\quad + C_{k,s} \|(J_k u^{k,\epsilon}_1 \cdot \nabla) J_k u^{k,\epsilon}_1 - (J_k u^{k,\epsilon}_2 \cdot \nabla) J_k u^{k,\epsilon}_2\|_{L^2} \\
&\lesssim C_{k,s,\phi} \|\textbf{y}_1^{k,\epsilon} - \textbf{y}_2^{k,\epsilon}\|_{\textbf{H}^s} + C_{k,s} \|(J_k u^{k,\epsilon}_1 \cdot \nabla) J_k u^{k,\epsilon}_1 - (J_k u^{k,\epsilon}_1 \cdot \nabla) J_k u^{k,\epsilon}_2\|_{L^2} \\
&\quad + C_{k,s} \|(J_k u^{k,\epsilon}_1 \cdot \nabla) J_k u^{k,\epsilon}_2 - (J_k u^{k,\epsilon}_2 \cdot \nabla) J_k u^{k,\epsilon}_2\|_{L^2} \\
&\lesssim_{k,s,\phi} \|\textbf{y}_1^{k,\epsilon} - \textbf{y}_2^{k,\epsilon}\|_{\textbf{H}^s} + (\|J_k u^{k,\epsilon}_1\|_{L^{\infty}}^2 + \|J_k u^{k,\epsilon}_2\|_{L^{\infty}}^2) \|J_k u^{k,\epsilon}_1 - J_k u^{k,\epsilon}_2\|_{L^2} \\
&\lesssim_{M,k,s,\phi} \|\textbf{y}_1^{k,\epsilon} - \textbf{y}_2^{k,\epsilon}\|_{\textbf{H}^s}.
\end{split}
\end{equation*}
The Lipschitz estimate \eqref{2.3} permits the application of the Picard theorem in Banach spaces, yielding a unique local solution $\textbf{y}^{k,\epsilon} \in \mathcal{C}^1([0,T_k); \textbf{H}^s(\mathbb{R}^2))$, where $T_k$ denotes the maximal existence time. We now show that $T_k = \infty$. Following standard arguments, it can be shown that $n^{k,\epsilon}(t,x) \geq 0$ for all $(t,x) \in [0,T_k) \times \mathbb{R}^2$ (cf.~\cite{nie2020global}). By integration by parts, the second equation in \eqref{Mod-22} yields
\begin{equation}\label{qq1}
\frac{1}{2} \frac{\mathrm{d}}{\mathrm{d}t} \|c^{k,\epsilon}\|_{L^2}^2 + \|\nabla J_k c^{k,\epsilon}\|_{L^2}^2 = -\int_{\mathbb{R}^2} |J_k c^{k,\epsilon}(x)|^2 (n^{k,\epsilon} * \rho^\epsilon)(x) \,\mathrm{d}x \leq 0.
\end{equation}
Similarly,
\begin{equation}\label{qq2}
\begin{split}
\frac{1}{2} \frac{\mathrm{d}}{\mathrm{d}t} \|n^{k,\epsilon}\|_{L^2}^2 + \|\nabla J_k n^{k,\epsilon}\|_{L^2}^2 &\leq \frac{1}{2} \|\nabla J_k n^{k,\epsilon}\|_{L^2}^2 + C_\epsilon \|c^{k,\epsilon}\|_{L^2}^2 \|n^{k,\epsilon}\|_{L^2}^2.
\end{split}
\end{equation}
And
\begin{equation}\label{qq3}
\frac{1}{2} \frac{\mathrm{d}}{\mathrm{d}t} \|u^{k,\epsilon}\|_{L^2}^2 + \|\nabla J_k u^{k,\epsilon}\|_{L^2}^2 \lesssim \|\nabla \phi\|_{L^{\infty}}^2 \|n^{k,\epsilon}\|_{L^2}^2 + \|u^{k,\epsilon}\|_{L^2}^2.
\end{equation}
Applying Gronwall's lemma to \eqref{qq1}--\eqref{qq3} gives
\begin{equation}\label{lem2-4}
\|\textbf{y}^{k,\epsilon}(t)\|_{\textbf{H}^0}^2 + \int_0^t \|J_k \textbf{y}^{k,\epsilon}(r)\|_{\textbf{H}^1}^2 \,\mathrm{d}r \lesssim_{t,\epsilon,\phi,\|\textbf{y}^{\epsilon}(0)\|_{\textbf{H}^0}} 1.
\end{equation}
Moreover, using the Bernstein inequality, we deduce
\begin{equation*}
\frac{\mathrm{d}}{\mathrm{d}t} \|\textbf{y}^{k,\epsilon}(t)\|_{\textbf{H}^s}^2 \lesssim \|F^{k,\epsilon}(\textbf{y}^{k,\epsilon}(t))\|_{\textbf{H}^s}^2 + \|\textbf{y}^{k,\epsilon}(t)\|_{\textbf{H}^s}^2 \lesssim_{t,\epsilon,\phi,k,s} C_{\|\textbf{y}^{k,\epsilon}(t)\|_{\textbf{H}^0}} \|\textbf{y}^{k,\epsilon}(t)\|_{\textbf{H}^s}^2.
\end{equation*}
Applying Gronwall's lemma again and using \eqref{lem2-4}, we obtain
\begin{equation*}
\|\textbf{y}^{k,\epsilon}(t)\|_{\textbf{H}^s}^2 \lesssim_{t,\epsilon,\phi,k,s,\|\textbf{y}^{\epsilon}(0)\|_{\textbf{H}^0}} 1, \quad \forall t \in [0,T_k).
\end{equation*}
Hence, invoking the continuation criterion for ODEs in Banach spaces (cf.~\cite[Theorem~3.3]{majda2002vorticity}), we conclude that the solution exists globally in time. This completes the proof.
\end{proof}

Next, we shall establish the uniform $\textbf{H}^s$-estimates for $\textbf{y}^{k,\epsilon}$ (independent of k) and successively take the limit as $k\rightarrow\infty$ to prove that the limit process $\textbf{y}^{\epsilon}:=(n^{\epsilon},c^{\epsilon},u^{\epsilon})$ is the unique global sufficiently smooth solution to the system \eqref{Mod-1}.
\begin{lemma}\label{lem2} Let integer $s\geq100$ and $1>\epsilon>0$. Under the same assumptions as in Theorem \ref{th1}, the system \eqref{Mod-1} admits a unique global solution
\begin{equation}\label{re}
\begin{split}
\mathbf{y}^{\epsilon}\in \mathcal{C}([0,T];\textbf{H}^{s-3}(\mathbb{R}^2))\cap \mathcal{C}^1([0,T];\textbf{H}^{s-5}(\mathbb{R}^2)).
\end{split}
\end{equation}
\end{lemma}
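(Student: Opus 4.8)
The plan is to obtain the solution of \eqref{Mod-1} as the limit, along a subsequence, of the second-level approximations $\mathbf{y}^{k,\epsilon}$ furnished by Lemma~\ref{lem1}, following the scheme: (i) uniform-in-$k$ energy estimates in $\mathbf{H}^s$, (ii) compactness and extraction of a convergent subsequence, (iii) passage to the limit in \eqref{mod-22}, and (iv) uniqueness. Every constant below is allowed to depend on $\epsilon,T,s,\phi$ and the initial data, but must be independent of $k$; the mollifier $\rho^\epsilon$ is precisely what makes such $\epsilon$-dependent bounds available, as it turns the chemotactic drift $\nabla(c^{k,\epsilon}*\rho^\epsilon)$ and the buoyancy forcing $(n^{k,\epsilon}\nabla\phi)*\rho^\epsilon$ into arbitrarily smooth, $L^\infty$-controlled factors.

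First I would derive the uniform $\mathbf{H}^s$ bound. Applying $\Lambda^s$ to each equation of \eqref{mod-22}, pairing with $\Lambda^s n^{k,\epsilon}$, $\Lambda^s c^{k,\epsilon}$, $\Lambda^s u^{k,\epsilon}$ in $L^2$, and summing, the dissipative terms $\Delta J_k^2(\cdot)$ yield the coercive contributions $\|\nabla J_k(\cdot)\|_{H^s}^2$; here one uses that $J_k$ is a real, self-adjoint, idempotent Fourier multiplier ($J_k^2=J_k$) commuting with $\Lambda^s$ and $\Delta$. The transport terms are controlled through the divergence-free commutator structure: since $\nabla\cdot u^{k,\epsilon}=0$ and $J_k$ is self-adjoint, the top-order part of $(\Lambda^s J_k(J_k u^{k,\epsilon}\cdot\nabla J_k n^{k,\epsilon}),\Lambda^s n^{k,\epsilon})_{L^2}$ cancels, leaving a commutator bounded by Kato--Ponce/Moser estimates; the chemotaxis and forcing terms are handled by moving all derivatives onto the mollified factors at the cost of an $\epsilon$-dependent constant. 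This produces a differential inequality
$$\frac{\mathrm{d}}{\mathrm{d}t}\|\mathbf{y}^{k,\epsilon}\|_{\mathbf{H}^s}^2+\|\nabla J_k\mathbf{y}^{k,\epsilon}\|_{\mathbf{H}^s}^2\lesssim_{\epsilon} P\big(\|\mathbf{y}^{k,\epsilon}\|_{\mathbf{H}^s}^2\big)$$
for a polynomial $P$, which together with the $\mathbf{H}^0$ bound \eqref{lem2-4} and Gronwall's lemma gives $\sup_{[0,T]}\|\mathbf{y}^{k,\epsilon}\|_{\mathbf{H}^s}^2+\int_0^T\|\nabla J_k\mathbf{y}^{k,\epsilon}\|_{\mathbf{H}^s}^2\,\mathrm{d}r\lesssim_{\epsilon,T}1$, uniformly in $k$. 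Reading $\partial_t\mathbf{y}^{k,\epsilon}=F^{k,\epsilon}(\mathbf{y}^{k,\epsilon})$ off the system then bounds $\partial_t\mathbf{y}^{k,\epsilon}$ uniformly in $L^2(0,T;\mathbf{H}^{s-2})$, two derivatives being lost to the Laplacian. I expect this uniform $\mathbf{H}^s$ estimate --- closing the nonlinear coupling while keeping every constant independent of $k$ --- to be the main obstacle.

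With these bounds in hand, compactness on the unbounded domain is the next difficulty: $\mathbf{H}^s\hookrightarrow\mathbf{H}^{s'}$ is not compact on $\mathbb{R}^2$, so I would work in the local spaces of Section~\ref{nnnn}. Using $\mathbf{H}^s\hookrightarrow\hookrightarrow\mathbf{H}^{s'}_{loc}$ for $s'<s$ together with the time-equicontinuity coming from the $\partial_t$-bound, an Aubin--Lions--Simon argument yields a subsequence (not relabeled) and a limit $\mathbf{y}^{\epsilon}$ with $\mathbf{y}^{k,\epsilon}\to\mathbf{y}^{\epsilon}$ strongly in $L^2(0,T;\mathbf{H}^{s-3}_{loc})$ and in $\mathcal{C}([0,T];\mathbf{H}^{s-3}_{loc})$, $\mathbf{y}^{k,\epsilon}\rightharpoonup\mathbf{y}^{\epsilon}$ weakly-$*$ in $L^\infty(0,T;\mathbf{H}^s)$, and $\partial_t\mathbf{y}^{k,\epsilon}\rightharpoonup\partial_t\mathbf{y}^{\epsilon}$ in $L^2(0,T;\mathbf{H}^{s-2})$. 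Since $J_k\to I$ strongly on each $H^m$, the strong local convergence suffices to pass to the limit in all nonlinear terms of \eqref{mod-22}, so $\mathbf{y}^{\epsilon}$ solves \eqref{Mod-1}. The regularity \eqref{re} is then recovered from the equation: the limit lies in $L^\infty(0,T;\mathbf{H}^s)$ with $\partial_t\mathbf{y}^{\epsilon}\in L^2(0,T;\mathbf{H}^{s-2})$, so the Lions--Magenes interpolation gives strong continuity $\mathbf{y}^{\epsilon}\in\mathcal{C}([0,T];\mathbf{H}^{s-3})$; feeding this back into $\partial_t\mathbf{y}^{\epsilon}=F^{\epsilon}(\mathbf{y}^{\epsilon})$, whose leading term $\Delta\mathbf{y}^{\epsilon}$ costs two further derivatives, yields $\partial_t\mathbf{y}^{\epsilon}\in\mathcal{C}([0,T];\mathbf{H}^{s-5})$ and hence $\mathbf{y}^{\epsilon}\in\mathcal{C}^1([0,T];\mathbf{H}^{s-5})$.

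Finally, for uniqueness I would take two solutions $\mathbf{y}^{\epsilon}_1,\mathbf{y}^{\epsilon}_2$ with the regularity \eqref{re} and identical data, set $\mathbf{w}=\mathbf{y}^{\epsilon}_1-\mathbf{y}^{\epsilon}_2$, and estimate $\mathbf{w}$ in $\mathbf{H}^0$. Subtracting the equations and pairing with $\mathbf{w}$, the divergence-free cancellation in the transport terms together with the $\epsilon$-smoothing in the chemotaxis and forcing terms lets every difference be bounded by $\|\mathbf{w}\|_{\mathbf{H}^0}^2$ times $\epsilon$-dependent factors involving the (finite) $\mathbf{H}^s$-norms of the two solutions; Gronwall's inequality and $\mathbf{w}(0)=0$ then force $\mathbf{w}\equiv0$, completing the argument.
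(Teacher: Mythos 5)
Your overall architecture (uniform-in-$k$ $\mathbf{H}^s$ bounds, passage to the limit, recovery of \eqref{re} from the equation, uniqueness via energy estimates on differences) is sound, but the step you yourself single out as the main obstacle contains a genuine gap. You claim the energy method yields
\begin{equation*}
\frac{\mathrm{d}}{\mathrm{d}t}\|\mathbf{y}^{k,\epsilon}\|_{\mathbf{H}^s}^2+\|\nabla J_k\mathbf{y}^{k,\epsilon}\|_{\mathbf{H}^s}^2\lesssim_{\epsilon} P\bigl(\|\mathbf{y}^{k,\epsilon}\|_{\mathbf{H}^s}^2\bigr)
\end{equation*}
with $P$ a polynomial, and that this ``together with the $\mathbf{H}^0$ bound \eqref{lem2-4} and Gronwall's lemma'' gives a bound on all of $[0,T]$. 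Gronwall requires the right-hand side to be at most linear in the energy with a time-integrable coefficient, whereas the inequality above is genuinely superlinear: the transport terms produce, via the Banach-algebra property, terms like $\|J_ku^{k,\epsilon}\|_{H^s}^2\|J_kn^{k,\epsilon}\|_{H^s}^2$, or, via Kato--Ponce commutators, coefficients like $\|\nabla J_ku^{k,\epsilon}\|_{L^\infty}+\|\nabla J_kn^{k,\epsilon}\|_{L^\infty}$ (an $H^3$-type quantity). A superlinear differential inequality $y'\le P(y)$ only controls $y$ up to a possibly finite blow-up time, and the $\mathbf{H}^0$ bound cannot repair this, since it controls neither $\|u^{k,\epsilon}\|_{H^2}$ nor $\|\nabla n^{k,\epsilon}\|_{L^\infty}$ pointwise or integrably in time. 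What is needed is the staged bootstrap the paper carries out: first the $\mathbf{H}^1$ estimate \eqref{lem2-9}, where the dangerous coefficient $\|J_ku^{k,\epsilon}\|_{L^2}^2\|\nabla J_ku^{k,\epsilon}\|_{L^2}^2$ is time-integrable by \eqref{lem2-4}; then the $u$-equation alone at the $H^2$ and $H^s$ levels (\eqref{u11}, \eqref{u2}, \eqref{use}), which is possible because its nonlinearity is self-contained and its forcing is linear in $n^{k,\epsilon}$ with $\epsilon$-smoothing; and only then the $n$- and $c$-equations at level $s$ (\eqref{lem2-10}, \eqref{cc}), whose coefficients $\|J_ku^{k,\epsilon}\|_{H^s}^2+C_\epsilon\|c^{k,\epsilon}\|_{L^2}^2$ have just been made time-integrable. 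Each stage is linear in its unknown with $L^1_t$ coefficients; your one-shot inequality summing all three equations at top order is exactly what loses this structure.

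Once the uniform bound \eqref{lem2-14} is in place, your limiting procedure is workable but genuinely different from the paper's. You invoke Aubin--Lions--Simon compactness in the local spaces $\mathbf{H}^{s-3}_{\mathrm{loc}}$, extract a subsequence, upgrade to $\mathcal{C}([0,T];\mathbf{H}^{s-3})$ by Lions--Magenes, and implicitly rely on uniqueness to dispense with the subsequence. The paper instead proves that $\{\mathbf{y}^{k,\epsilon}\}_k$ is Cauchy in $\mathcal{C}([0,T];\mathbf{H}^{s-3}(\mathbb{R}^2))$: it estimates the difference $\mathbf{y}^{k,\epsilon}-\mathbf{y}^{l,\epsilon}$ directly, using Bernstein-type bounds of the form $\|J_lf^l-J_kf^k\|_{H^{s-d}}\lesssim\max\{k^{-d},l^{-d}\}(\|f^k\|_{H^s}+\|f^l\|_{H^s})+\|f^{k}-f^{l}\|_{H^{s-d}}$ to quantify the mismatch of the truncations, arriving at $\sup_{[0,T]}\|\mathbf{y}^{k,\epsilon}-\mathbf{y}^{l,\epsilon}\|_{\mathbf{H}^{s-3}}^2\lesssim\max\{k^{-2},l^{-2}\}$. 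This gives strong, global-in-space convergence of the whole sequence with an explicit rate and requires no compactness machinery or weak topologies; both routes then obtain the $\mathcal{C}^1([0,T];\mathbf{H}^{s-5})$ regularity identically, by reading $\partial_t\mathbf{y}^{\epsilon}$ off the equation and using that $\mathbf{H}^{s-5}$ is a Banach algebra. Your $\mathbf{H}^0$-level uniqueness argument is fine given the regularity \eqref{re} and the $\epsilon$-smoothing (the paper defers to the $H^s$-level argument of Proposition~\ref{Pro}), so apart from the Gronwall issue above your proposal would go through.
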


\begin{proof}[\emph{\textbf{Proof}}]
\textsf{Step 1: $\mathbf{H}^s$-estimate.} The $\textbf{H}^0$-estimate for the solution $\textbf{y}^{k,\epsilon}$ has already been established in \eqref{lem2-4}. We now proceed to derive the $\textbf{H}^1$-estimate. Applying the Gagliardo-Nirenberg inequality
\begin{equation*}
\begin{split}
\|f\|_{L^4(\mathbb{R}^2)}\lesssim\|\nabla f\|_{L^2(\mathbb{R}^2)}^{\frac{1}{2}}\|f\|_{L^2(\mathbb{R}^2)}^{\frac{1}{2}},\quad f\in H^1(\mathbb{R}^2),
\end{split}
\end{equation*}
we obtain
\begin{equation}\label{lem2-5}
\begin{split}
&\frac{1}{2}\frac{\mathrm{d}}{\mathrm{d}t}\|\nabla u^{k,\epsilon}\|_{L^2}^2+\|\Delta J_ku^{k,\epsilon}\|_{L^2}^2\\
&=((J_ku^{k,\epsilon}\cdot \nabla) J_ku^{k,\epsilon},\Delta J_ku^{k,\epsilon})_{L^2}-((n^{k,\epsilon}\nabla \phi)*\rho^\epsilon,\Delta J_ku^{k,\epsilon})_{L^2}\\
&\leq\frac{1}{2}\|\Delta J_ku^{k,\epsilon}\|_{L^2}^2+\|J_ku^{k,\epsilon}\|_{L^4}^2\|\nabla J_ku^{k,\epsilon}\|_{L^4}^2+\|\nabla \phi\|_{L^{\infty}}^2\|J_kn^{k,\epsilon}\|_{L^2}^2\\
&\leq\frac{3}{4}\|\Delta J_ku^{k,\epsilon}\|_{L^2}^2+C\|J_ku^{k,\epsilon}\|_{L^2}^2\|\nabla J_ku^{k,\epsilon}\|_{L^2}^2\|\nabla u^{k,\epsilon}\|_{L^2}^2+\|\nabla \phi\|_{L^{\infty}}^2\|J_kn^{k,\epsilon}\|_{L^2}^2.
\end{split}
\end{equation}
Invoking the Gronwall lemma, and using the estimates \eqref{lem2-4} and \eqref{lem2-5}, we deduce that
\begin{equation}\label{lem2-7}
\begin{split}
\|\nabla u^{k,\epsilon}(t)\|_{L^2}^2+\int_0^t\|\Delta J_ku^{k,\epsilon}(r)\|_{L^2}^2\mathrm{d}r\lesssim_{t,\epsilon,\phi,\|\textbf{y}^{\epsilon}(0)\|_{\textbf{H}^1}}1.
\end{split}
\end{equation}
Similarly, using the Sobolev embedding $H^2(\mathbb{R}^2)\hookrightarrow L^{\infty}(\mathbb{R}^2)$, we infer that
\begin{equation}\label{lem2-8}
\begin{split}
&\frac{1}{2}\frac{\mathrm{d}}{\mathrm{d}t}\|\nabla n^{k,\epsilon}\|_{L^2}^2+\|\Delta J_kn^{k,\epsilon}\|_{L^2}^2\\
&=(J_ku^{k,\epsilon}\cdot \nabla J_kn^{k,\epsilon},\Delta J_kn^{k,\epsilon})_{L^2}+(\nabla\cdot(J_kn^{k,\epsilon}\nabla (c^{k,\epsilon}*\rho^\epsilon)),\Delta J_kn^{k,\epsilon})_{L^2}\\
&\leq\frac{1}{2}\|\Delta J_kn^{k,\epsilon}\|_{L^2}^2+C\|J_ku^{k,\epsilon}\|_{L^{\infty}}^2\|\nabla J_kn^{k,\epsilon}\|_{L^2}^2+C\|\nabla(c^{k,\epsilon}*\rho^\epsilon)\|_{L^{\infty}}^2\|\nabla J_kn^{k,\epsilon}\|_{L^2}^2\\
&\quad+C\|\Delta(c^{k,\epsilon}*\rho^\epsilon)\|_{L^{\infty}}^2\|J_kn^{k,\epsilon}\|_{L^2}^2\\
&\leq\frac{1}{2}\|\Delta J_kn^{k,\epsilon}\|_{L^2}^2+C\|J_ku^{k,\epsilon}\|_{H^{2}}^2\|\nabla n^{k,\epsilon}\|_{L^2}^2+C_{\epsilon}\|c^{k,\epsilon}\|_{L^{2}}^2\|\nabla n^{k,\epsilon}\|_{L^2}^2+C_{\epsilon}\|c^{k,\epsilon}\|_{L^{2}}^2\|n^{k,\epsilon}\|_{L^2}^2.
\end{split}
\end{equation}
Applying the Gronwall lemma once again, we derive from \eqref{lem2-4}, \eqref{lem2-7}, and \eqref{lem2-8} that
\begin{equation*}
\begin{split}
\|\nabla n^{k,\epsilon}(t)\|_{L^2}^2+\int_0^t\|\Delta J_kn^{k,\epsilon}(r)\|_{L^2}^2\mathrm{d}r\lesssim_{t,\epsilon,\phi,\|\textbf{y}^{\epsilon}(0)\|_{\textbf{H}^1}}1.
\end{split}
\end{equation*}
By a similar argument applied to the $c^{k,\epsilon}$-equation, we deduce
\begin{equation*}
\begin{split}
\|\nabla c^{k,\epsilon}(t)\|_{L^2}^2+\int_0^t\|\Delta J_kc^{k,\epsilon}(r)\|_{L^2}^2\mathrm{d}r\lesssim_{t,\epsilon,\phi,\|\textbf{y}^{\epsilon}(0)\|_{\textbf{H}^1}}1.
\end{split}
\end{equation*}
Consequently, we obtain the uniform $\textbf{H}^1$-bound:
\begin{equation}\label{lem2-9}
\begin{split}
\|\textbf{y}^{k,\epsilon}(t)\|_{\textbf{H}^1}^2+\int_0^t\|J_k\textbf{y}^{k,\epsilon}(r)\|_{\textbf{H}^2}^2\mathrm{d}r\lesssim_{t,\epsilon,\phi,\|\textbf{y}^{\epsilon}(0)\|_{\textbf{H}^1}}1.
\end{split}
\end{equation}

We now establish the higher-order estimate for $\textbf{y}^{k,\epsilon}$ in $\textbf{H}^s$ with $s\geq 100$. Applying the Bessel potential operator $\Lambda^s$ to the $n^{k,\epsilon}$-equation in \eqref{mod-22} and taking the $L^2$-inner product with $\Lambda^s n^{k,\epsilon}$ yields
\begin{equation}\label{lem2-101}
\begin{split}
&\frac{1}{2}\frac{\mathrm{d}}{\mathrm{d}t}\|n^{k,\epsilon}\|_{H^s}^2+\|\nabla J_kn^{k,\epsilon}\|_{H^s}^2\\
&=(\Lambda^s(J_ku^{k,\epsilon} J_kn^{k,\epsilon}),\nabla\Lambda^s J_kn^{k,\epsilon})_{L^2}+(\Lambda^s(J_kn^{k,\epsilon}\nabla (c^{k,\epsilon}*\rho^\epsilon)),\nabla\Lambda^s J_kn^{k,\epsilon})_{L^2}\\
&\leq\frac{1}{2}\|\nabla\Lambda^s J_kn^{k,\epsilon}\|_{L^2}^2+\|\Lambda^s(J_ku^{k,\epsilon} J_kn^{k,\epsilon})\|_{L^2}^2+\|\Lambda^s(J_kn^{k,\epsilon}\nabla (c^{k,\epsilon}*\rho^\epsilon))\|_{L^2}^2.
\end{split}
\end{equation}
Noting that $H^m(\mathbb{R}^2)$ is a Banach algebra for $m > 1$, we deduce from \eqref{lem2-101} that
\begin{equation}\label{lem2-10}
\begin{split}
\frac{\mathrm{d}}{\mathrm{d}t}\|n^{k,\epsilon}\|_{H^s}^2 + \|\nabla J_k n^{k,\epsilon}\|_{H^s}^2 \lesssim_s \left( \|J_k u^{k,\epsilon}\|_{H^s}^2 + C_\epsilon \|c^{k,\epsilon}\|_{L^2}^2 \right) \|J_k n^{k,\epsilon}\|_{H^s}^2.
\end{split}
\end{equation}
Similarly, we obtain
\begin{equation}\label{cc}
\begin{split}
\frac{\mathrm{d}}{\mathrm{d}t}\|c^{k,\epsilon}\|_{H^s}^2 + \|\nabla J_k c^{k,\epsilon}\|_{H^s}^2 \lesssim_s \left( \|J_k u^{k,\epsilon}\|_{H^s}^2 + C_\epsilon \|J_k n^{k,\epsilon}\|_{L^2}^2 \right) \|J_k c^{k,\epsilon}\|_{H^s}^2.
\end{split}
\end{equation}
By employing the Moser-type estimate (cf. \cite{miao2012littlewood}),
\begin{equation*}
\begin{split}
\|fg\|_{H^s} \lesssim_s \|f\|_{L^\infty} \|g\|_{H^s} + \|g\|_{L^\infty} \|f\|_{H^s}, \quad s > 0, ~ f,~g \in L^\infty(\mathbb{R}^2) \cap H^s(\mathbb{R}^2),
\end{split}
\end{equation*}
we infer from the $u^{k,\epsilon}$-equation that
\begin{equation}\label{u2}
\begin{split}
\frac{\mathrm{d}}{\mathrm{d}t} \|u^{k,\epsilon}\|_{H^s}^2 + \|\nabla J_k u^{k,\epsilon}\|_{H^s}^2
&\lesssim \|J_k u^{k,\epsilon} \otimes J_k u^{k,\epsilon}\|_{H^s}^2 + \|(n^{k,\epsilon} \nabla \phi) * \rho^\epsilon\|_{H^s}^2 \\
&\lesssim_s \|J_k u^{k,\epsilon}\|_{H^2}^2 \|J_k u^{k,\epsilon}\|_{H^s}^2 + C_\epsilon \|\nabla \phi\|_{L^\infty}^2 \|n^{k,\epsilon}\|_{L^2}^2.
\end{split}
\end{equation}
Next, multiplying both sides of the third equation in \eqref{mod-22} by $-\Delta^2 u^{k,\epsilon}$ yields
\begin{equation}\label{u11}
\begin{split}
\frac{\mathrm{d}}{\mathrm{d}t} \|\Delta u^{k,\epsilon}\|_{L^2}^2 + \|\nabla J_k \Delta u^{k,\epsilon}\|_{L^2}^2
&\lesssim \|\nabla((J_k u^{k,\epsilon} \cdot \nabla) J_k u^{k,\epsilon})\|_{L^2}^2 + \|\nabla((n^{k,\epsilon} \nabla \phi) * \rho^\epsilon)\|_{L^2}^2 \\
&\lesssim \|J_k u^{k,\epsilon}\|_{H^2}^2 \|\Delta u^{k,\epsilon}\|_{L^2}^2 + C_\epsilon \|\nabla \phi\|_{L^\infty}^2 \|n^{k,\epsilon}\|_{L^2}^2.
\end{split}
\end{equation}
By Gronwall's lemma, it follows from \eqref{lem2-9} and \eqref{u11} that
\begin{equation}\label{u111}
\begin{split}
\|u^{k,\epsilon}(t)\|_{H^2}^2 + \int_0^t \|J_k u^{k,\epsilon}(r)\|_{H^3}^2 \,\mathrm{d}r \lesssim_{t,\epsilon,\phi,\|\textbf{y}^{\epsilon}(0)\|_{\textbf{H}^2}} 1.
\end{split}
\end{equation}
Applying Gronwall's lemma once again, we deduce from \eqref{lem2-9}, \eqref{u2}, and \eqref{u111} that
\begin{equation}\label{use}
\begin{split}
\|u^{k,\epsilon}(t)\|_{H^s}^2 + \int_0^t \|J_k u^{k,\epsilon}(r)\|_{H^{s+1}}^2 \,\mathrm{d}r \lesssim_{t,\epsilon,\phi,s,\|\textbf{y}^{\epsilon}(0)\|_{\textbf{H}^s}} 1.
\end{split}
\end{equation}
Finally, combining the estimate \eqref{use} with Gronwall's lemma applied to \eqref{lem2-10}, and \eqref{cc}, we arrive at
\begin{equation}\label{lem2-14}
\begin{split}
\|\textbf{y}^{k,\epsilon}(t)\|_{\textbf{H}^s}^2 + \int_0^t \|J_k \textbf{y}^{k,\epsilon}(r)\|_{\textbf{H}^{s+1}}^2 \,\mathrm{d}r \lesssim_{t,\epsilon,\phi,s,\|\textbf{y}^{\epsilon}(0)\|_{\textbf{H}^s}} 1.
\end{split}
\end{equation}

\textsf{Step 2: Strong convergence result.} We shall prove that
\begin{equation}\label{qqq1}
\begin{split}
\textbf{y}^{k,\epsilon}\rightarrow \textbf{y}^{\epsilon}~\textrm{strongly in}~\mathcal {C}([0,T];\textbf{H}^{s-3}(\mathbb{R}^2))~\textrm{as}~k\rightarrow\infty.
\end{split}
\end{equation}
To this end, for any fixed $\epsilon \in (0,1)$, let $\textbf{y}^{k,\epsilon}$ and $\textbf{y}^{l,\epsilon}$ be solutions to \eqref{Mod-22} associated to the frequency truncations $J_k$ and $J_l$, respectively. Define $\textbf{y}^{k,l,\epsilon}:=\textbf{y}^{k,\epsilon}-\textbf{y}^{l,\epsilon}$, it follows from \eqref{Mod-22} that
\begin{equation}\label{2..1}
\left\{
\begin{aligned}
& \frac{\mathrm{d}}{\mathrm{d} t}  \textbf{y}^{k,l,\epsilon}= F ^{k,\epsilon} (\textbf{y}^{k,\epsilon}  )-F ^{l,\epsilon} (\textbf{y}^{l,\epsilon}  ) ,\\
& \textbf{y}^{k,l,\epsilon}(0) =0.
\end{aligned}
\right.
\end{equation}
For convenience,  in the following discussion we will simply write $\textbf{y}^{k,l}$, $\textbf{y}^{k}$, $\textbf{y}$ and $F ^{k}(\cdot)$ instead of $\textbf{y}^{k,l,\epsilon}$, $\textbf{y}^{k,\epsilon}$, $\textbf{y}^{\epsilon}$ and $F ^{k,\epsilon}(\cdot)$, respectively. Then, we have
\begin{equation*}
\begin{split}
F ^{k}_1 (\textbf{y}^{k}  )-F ^{l}_1 (\textbf{y}^{l}  )&=\Delta J_k^2  n^{k,l}+(J_k^2-J_l^2)\Delta n^l+[J_l(J_l u^l\cdot \nabla J_l n^l)- J_k(J_k u^k\cdot \nabla J_k n^k)]   \\
&\quad+[\nabla\cdot J_l(J_ln^l(\nabla c^l*\rho^{\epsilon}))- \nabla\cdot J_k(J_kn^k(\nabla c^k*\rho^{\epsilon}))]\\
&:= \Delta J_k^2  n^{k,l}+ I_1+I_2+I_3,
 \end{split}
\end{equation*}
\begin{equation*}
\begin{split}
F ^{k}_2 (\textbf{y}^{k}  )-F ^{l}_2 (\textbf{y}^{l}  )&=\Delta J_k^2  c^{k,l}+(J_k^2-J_l^2)\Delta c^l+[J_l(J_l u^l\cdot \nabla J_l c^l)- J_k(J_k u^k\cdot \nabla J_k c^k)]\\
&\quad+[J_l(J_lc^l(J_ln^l*\rho^{\epsilon}))-J_k(J_kc^k(J_kn^k*\rho^{\epsilon}))]\\
&:=\Delta J_k^2  c^{k,l}+ K_1+K_2 + K_3,
 \end{split}
\end{equation*}
and
\begin{equation*}
\begin{split}
F ^{k}_3 (\textbf{y}^{k}  )-F ^{l}_3 (\textbf{y}^{l}  )&=\Delta J_k^2  u^{k,l}+(J_k^2-J_l^2)\Delta u^l + \textbf{P} [  J_l((J_l u^l\cdot \nabla) J_l u^l)- J_k((J_k u^k\cdot \nabla) J_k u^k)]      \\
&\quad+ \textbf{P} [J_k((J_kn^k\nabla \phi)*\rho^{\epsilon})- J_l((J_ln^l\nabla \phi)*\rho^{\epsilon})]\\
&:=\Delta J_k^2  u^{k,l}+ L_1+L_2 + L_3.
 \end{split}
\end{equation*}
Actually, for the terms associated to $I_i,~K_i$ and $L_i$, we have
\begin{subequations}
\begin{align}
& |(n^{k,l},I_1)_{H^{s-3}}| \lesssim \|n^{k,l}\|_{H^{s-3}}^2+\max\{\frac{1}{k^2},\frac{1}{l^2}\} \|  n^l\|_{H^{s}}^2 ,\label{2.30a}\\
&|(n^{k,l},I_2)_{H^{s-3}}|\lesssim\|n^{k,l}\|_{H^{s-3}} ^2+\max\{\frac{1}{k^2},\frac{1}{l^2}\} (\| u^k\|_{H^{s}}^4+\| u^l\|_{H^{s}}^4+\| n^l\|_{H^{s}}^4+\| n^k\|_{H^{s}} ^4),\label{2.30b}\\
&|(n^{k,l},I_3)_{H^{s-3}}| \lesssim\|n^{k,l}\|_{H^{s-3}} ^2   +  \max\{\frac{1}{k^2},\frac{1}{l^2}\} (\|n^k \|_{H^{s}}^4+\|n^l \|_{H^{s}}^4+\|c^l \|_{H^{s}}^4+\|c^k\|_{H^{s}}^4),\label{2.30c}\\
& |(c^{k,l},K_1)_{H^{s-3}}| \lesssim \|c^{k,l}\|_{H^{s-3}}^2+\max\{\frac{1}{k^2},\frac{1}{l^2}\} \|c^l\|_{H^{s}}^2 \label{2.31a},\\
&|(c^{k,l},K_2)_{H^{s-3}}| \lesssim \|c^{k,l}\|_{H^{s-3}}^2 + \frac{1}{k^2}(\| u^k\|_{H^{s}}^4+\| u^l\|_{H^{s}}^4+\| c^l\|_{H^{s}}^4+\| c^k\|_{H^{s}} ^4),\label{2.31b}\\
&|(c^{k,l},K_3)_{H^{s-3}}| \lesssim \| c^{k,l} \|_{H^{s-3}}^2 + \max\{\frac{1}{k^2},\frac{1}{l^2}\}(\|n^k \|_{H^{s}}^4+\|n^l \|_{H^{s}}^4+\|c^l \|_{H^{s}}^4+\|c^k\|_{H^{s}}^4),\label{2.31c}\\
& |(u^{k,l},L_1)_{H^{s-3}}| \lesssim \|u^{k,l}\|_{H^{s-3}}^2+\max\{\frac{1}{k^2},\frac{1}{l^2}\} \|  u^l\|_{H^{s}}^2 ,\label{2.32a}\\
&|(u^{k,l},L_2)_{H^{s-3}}| \lesssim  \|u^{k,l}\|_{H^{s-3}}^2+ \max\{\frac{1}{k^2},\frac{1}{l^2}\}(\|u^{k}\|_{H^{s}}^4+\|u^{l}\|_{H^{s}}^4 ),\label{2.32b}\\
&|(u^{k,l},L_3)_{H^{s-3}}|\lesssim_{\phi,\epsilon}  \|u^{k,l}\|_{H^{s-3}}^2+ \max\{\frac{1}{k^2},\frac{1}{l^2}\} (\|n^k \|_{H^s}^2+\|n^l \|_{H^s}^2).\label{2.32c}
\end{align}
\end{subequations}
Generally, it follows from the Bernstein inequality that
\begin{equation*}
\begin{split}
\|J_l f^l- J_k f^k\|_{H^{s-d}}\lesssim\left\{
\begin{aligned}
& \max\{\frac{1}{k^d},\frac{1}{l^d}\} ( \|f^k\|_{H^{s}}+\|f^l\|_{H^{s}}),\\
&   \max\{\frac{1}{k^d},\frac{1}{l^d}\} \|f^l\|_{H^{s}}+ \|f^{k,l}\|_{H^{s-d}} ,
\end{aligned}
\right.\quad  \textrm{for all} ~s \in \mathbb{R},~d\in \mathbb{N}.
 \end{split}
\end{equation*}
For \eqref{2.30a}, by using the Bernstein inequality, we get
\begin{equation*}
\begin{split}
 |(n^{k,l},I_1)_{H^{s-3}}|&\leq \|n^{k,l}\|_{H^{s-3}}\|(J_k +J_l )(J_k -J_l )\Delta n^l\|_{H^{s-3}}\\
 & \lesssim \max\{\frac{1}{k},\frac{1}{l}\} \|n^{k,l}\|_{H^{s-3}}\|  n^l\|_{H^{s}} \lesssim \|n^{k,l}\|_{H^{s-3}}^2+\max\{\frac{1}{k^2},\frac{1}{l^2}\} \|  n^l\|_{H^{s}}^2.
 \end{split}
\end{equation*}
For \eqref{2.30b}, by the fact that $H^{s-3}(\mathbb{R}^2)$ is a Banach algebra, we have
\begin{equation*}
\begin{split}
 |(n^{k,l},I_2)_{H^{s-3}}|&\leq\Bigl[|(n^{k,l},(J_l-J_k)(J_l u^l\cdot \nabla J_l n^l))_{H^{s-3}}|\\
 &\quad+|(n^{k,l},J_k[ (J_l u^l-J_ku^k)\cdot \nabla J_l n^l +  J_k u^k\cdot \nabla( J_l n^l- J_k n^k) ] )_{H^{s-3}}|\Bigl]\\
 & \lesssim  \|n^{k,l}\|_{H^{s-3}} \left(\frac{1}{l^5}\| u^l\|_{H^{s}}\| n^l\|_{H^{s}}+ \max\{\frac{1}{k^3l^2},\frac{1}{l^5} \}(\|u^k\|_{H^{s}}+\|u^l\|_{H^{s}})\| n^l \|_{H^{s}}\right.\\
 &\left.\quad+ \max\{\frac{1}{k^3l^2},\frac{1}{k^5}\} \|  u^k\|_{H^{s}} (\|n^k\|_{H^{s}}+\|n^l\|_{H^{s}})\right)\\
 &\lesssim  \|n^{k,l}\|_{H^{s-3}} ^2+  \max\{\frac{1}{l^{2}}, \frac{1}{k^{2}} \}\left( \| u^k\|_{H^{s}}^4+\| u^l\|_{H^{s}}^4+\| n^l\|_{H^{s}}^4+\| n^k\|_{H^{s}} ^4\right).
 \end{split}
\end{equation*}
Similarly, it follows that
\begin{equation*}
\begin{split}
|(n^{k,l},I_3)_{H^{s-3}}|&\lesssim \|n^{k,l}\|_{H^{s-3}} \left(\|(J_l-J_k) J_ln^l\nabla (c^l*\rho^{\epsilon})\|_{H^{s-2}}+\|(J_l n^l-J_k n^k) \nabla  (c^k*\rho^{\epsilon})\|_{H^{s-2}}\right.\\
 &\left.\quad+\| J_k n^k [\nabla (c^l*\rho^{\epsilon})-\nabla (c^k*\rho^{\epsilon})]\|_{H^{s-2}}\right)\\
 &\lesssim\|n^{k,l}\|_{H^{s-3}} \left( \frac{1}{l^2} \| n^l \|_{H^{s}}\|c^l\|_{H^{s}}  +  \max\{\frac{1}{k^2},\frac{1}{l^2}\} ( \|n^k\|_{H^{s}}+\|n^l\|_{H^{s}})\|c^k\|_{H^{s}}\right.\\
 &\left.\quad+   \max\{\frac{1}{k^2},\frac{1}{l^2}\} \| n^k\|_{H^{s}}(\|c^l \|_{H^{s}}+\|c^k\|_{H^{s}})\right)\\
 &\lesssim\|n^{k,l}\|_{H^{s-3}} ^2   +  \max\{\frac{1}{k^2},\frac{1}{l^2}\} (\|n^k \|_{H^{s}}^4+\|n^l \|_{H^{s}}^4+\|c^l \|_{H^{s}}^4+\|c^k\|_{H^{s}}^4).
  \end{split}
\end{equation*}
The proofs of inequalities \eqref{2.31a} through \eqref{2.32c} are similar and are therefore omitted. We now proceed to estimate $\|\Lambda^{s-3}\textbf{y}^{k,l}\|_{L^2}^2$ based on the system \eqref{2..1}. Utilizing the estimates \eqref{2.30a}--\eqref{2.32c}, we infer that
\begin{equation}\label{2.31}
\begin{split}
& \|\textbf{y}^{k,l}(t)\|_{\textbf{H}^{s-3}}^2 + 2 \int_0^t\|\nabla J_k \textbf{y}^{k,l}(r)\|_{\textbf{H}^{s-3}}^2 \mathrm{d}r \\
&= 2 \sum_{1\leq i \leq 3}\int_0^t (n^{k,l}(r), I_i(r))_{H^{s-3}} \mathrm{d}r + 2 \sum_{1\leq i \leq 3}\int_0^t (c^{k,l}(r), K_i(r))_{H^{s-3}} \mathrm{d}r \\
&\quad + 2 \sum_{1\leq i \leq 3}\int_0^t (u^{k,l}(r), L_i(r))_{H^{s-3}} \mathrm{d}r \\
&\lesssim_{\phi,\epsilon,s} \int_0^t \|\textbf{y}^{k,l}(r)\|_{\textbf{H}^{s-3}}^2 \mathrm{d}r + \max\left\{\frac{1}{k^2}, \frac{1}{l^2}\right\} \int_0^t \left( 1 + \|\textbf{y}^k(r)\|_{\textbf{H}^s}^4 + \|\textbf{y}^l(r)\|_{\textbf{H}^s}^4 \right) \mathrm{d}r.
\end{split}
\end{equation}
Applying Gronwall's lemma to \eqref{2.31}, and using the uniform estimate \eqref{lem2-14}, we obtain
\begin{equation*}
\begin{split}
\|\textbf{y}^{k,l}(t)\|_{\textbf{H}^{s-3}}^2 \lesssim_{t,\epsilon,\phi,s,\|\textbf{y}^{\epsilon}(0)\|_{\textbf{H}^s}} \max\left\{\frac{1}{k^2}, \frac{1}{l^2}\right\} \to 0 \quad \text{as} \quad k,~l \to \infty.
\end{split}
\end{equation*}
Therefore, the sequence $\{\textbf{y}^k\}_{k \in \mathbb{N}^+}$ is a Cauchy sequence in $\mathcal{C}([0,T];\textbf{H}^{s-3}(\mathbb{R}^2))$, which yields the convergence result \eqref{qqq1}. Since $\textbf{y} \in \mathcal{C}([0,T];\textbf{H}^{s-3}(\mathbb{R}^2))$ and the space $\textbf{H}^{s-5}(\mathbb{R}^2)$ is a Banach algebra, the parabolic structure of the system \eqref{Mod-1} implies that $\frac{\mathrm{d}}{\mathrm{d}t}\textbf{y} \in \mathcal{C}([0,T];\textbf{H}^{s-5}(\mathbb{R}^2))$, establishing the time-space regularity \eqref{re}.
Finally, the proof of uniqueness follows the same argument as in Proposition~\ref{Pro}, and we omit the details here. The proof of Lemma~\ref{lem2} is thus complete.

\end{proof}

\section{Proof of Theorem \ref{th1}}\label{sec3}
In this section, we first derive a uniform entropy-energy estimate for the family of smooth approximate solutions $\{\textbf{y}^{\epsilon}\}_{\epsilon > 0}$, where the estimate is independent of the regularization parameter $\epsilon$. Based on this uniform bound, we then establish the existence of a unique strong solution to the original system \eqref{CNS0} by passing to the limit $\epsilon \to 0$ using the weak compactness method. We begin with the following uniform a priori estimate.

\subsection{A entropy-energy estimate}\label{sec3-1}
\begin{lemma} \label{5lem}
Let the assumptions in Theorem \ref{th1} be satisfied. Let $(n^\epsilon,c^\epsilon,u^\epsilon)$ be the global solution of \eqref{Mod-1} with $\epsilon \in (0,1)$. Then the following estimates hold:
\begin{align}
&\|n^{\epsilon}(t)\|_{L^1}=\|n_0\|_{L^1};\quad\|c^{\epsilon}(t)\|_{L^1\cap L^{\infty}}\leq\|c_0\|_{L^1\cap L^{\infty}};\label{5lem-1}\\
&\mathcal{F}_1(n^{\epsilon},c^{\epsilon},u^{\epsilon})(t)
+ \int_0^{t}\mathcal{G}_1(n^{\epsilon},c^{\epsilon},u^{\epsilon})(r)\mathrm{d}r\lesssim _{n_0,c_0,u_0,\phi,t}  1.\label{5lem-2}
\end{align}
where
\begin{equation*}
\begin{split}
&\mathcal{F}_1(n^{\epsilon},c^{\epsilon},u^{\epsilon})(t):=\|n^{\epsilon}(t)\|_{L^1\cap L\ln L}+\|\nabla\sqrt{c^{\epsilon}(t)}\|_{L^2}^2+\|u^{\epsilon}(t)\|_{L^2}^2,\\
&\mathcal{G}_1(n^{\epsilon},c^{\epsilon},u^{\epsilon})(t):=\|\nabla\sqrt{n^{\epsilon}(t)+1}\|_{L^2}^2+\|\sqrt{c^{\epsilon}(t)}|D^2 \ln c^{\epsilon}(t)|\|_{L^2}^2+\|n^{\epsilon}(t)*\rho^{\epsilon}|\nabla\sqrt{c^{\epsilon}(t)}|^2\|_{L^1}+\|\nabla u^{\epsilon}(t)\|_{L^2}^2.
\end{split}
\end{equation*}
Here, $D^2 \ln c^\epsilon$ denotes the Hessian matrix of $\ln c^\epsilon$ and $L \ln L(\mathbb{R}^2)$ denotes the  Zygmund space which is equipped with the norm
$
\|f\|_{L \ln L}=\inf  \{k>0; \int_{\mathbb{R}^2} Z(f/ k) \mathrm{d}x \leq 1 \}
$
with respect to $Z(t) =t\ln^+ t$ if $t\geq1$ and $Z(t) =0$ otherwise. Moreover, it follows that
\begin{align}
&\|c^{\epsilon}(t)\|_{H^1}^2+\int_0^t\|c^{\epsilon}(r)\|_{H^2}^2\mathrm{d}r\lesssim _{n_0,c_0,u_0,\phi,t}  1,\label{5lem-3}\\
&\|n^{\epsilon}(t)\|_{L^2}^2+\int_0^t\|n^{\epsilon}(r)\|_{H^1}^2\mathrm{d}r\lesssim _{n_0,c_0,u_0,\phi,t}  1.\label{5lem-4}
\end{align}
\end{lemma}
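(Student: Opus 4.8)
The plan is to run the entropy method for chemotaxis--fluid systems, exploiting the dissipative structure already visible in the statement of $\mathcal{G}_1$. Throughout I work with the smooth, rapidly decaying solution $\mathbf{y}^\epsilon$ supplied by Lemma~\ref{lem2}, so every integration by parts on $\mathbb{R}^2$ is justified and no boundary terms arise. First I would record the zeroth-order bounds \eqref{5lem-1}. Integrating the first equation, the diffusion and chemotaxis terms are in divergence form and $\int u^\epsilon\cdot\nabla n^\epsilon=-\int(\nabla\cdot u^\epsilon)n^\epsilon=0$, while $n^\epsilon\ge0$ (from the proof of Lemma~\ref{lem1}) gives $\|n^\epsilon(t)\|_{L^1}=\int n^\epsilon=\|n_0\|_{L^1}$. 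Since $n^\epsilon*\rho^\epsilon\ge0$, the second equation makes $c^\epsilon$ a subsolution of a transport--diffusion equation, so the maximum principle yields $0\le c^\epsilon\le\|c_0\|_{L^\infty}$, and $\frac{d}{dt}\int c^\epsilon=-\int c^\epsilon(n^\epsilon*\rho^\epsilon)\le0$ gives the $L^1$ bound.

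The heart of the matter is \eqref{5lem-2}, obtained by testing the three equations against the entropy variables and adding. Testing the $n$-equation with $\ln(n^\epsilon+1)$ (the transport term vanishing by incompressibility) gives
\[
\frac{d}{dt}\int\big[(n^\epsilon+1)\ln(n^\epsilon+1)-n^\epsilon\big]\,dx+4\|\nabla\sqrt{n^\epsilon+1}\|_{L^2}^2=\int \frac{n^\epsilon}{n^\epsilon+1}\,\nabla n^\epsilon\cdot\nabla(c^\epsilon*\rho^\epsilon)\,dx ,
\]
where, since $\|n^\epsilon\|_{L^1}$ is conserved, the left functional is equivalent to $\|n^\epsilon\|_{L^1\cap L\ln L}$. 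Testing the $u$-equation with $u^\epsilon$ gives $\frac12\frac{d}{dt}\|u^\epsilon\|_{L^2}^2+\|\nabla u^\epsilon\|_{L^2}^2=\int(n^\epsilon*\rho^\epsilon)\nabla\phi\cdot u^\epsilon$. The delicate computation is the evolution of $\int|\nabla c^\epsilon|^2/c^\epsilon=4\|\nabla\sqrt{c^\epsilon}\|_{L^2}^2$: starting from
\[
\partial_t\ln c^\epsilon=\Delta\ln c^\epsilon+|\nabla\ln c^\epsilon|^2-u^\epsilon\cdot\nabla\ln c^\epsilon-(n^\epsilon*\rho^\epsilon),
\]
a pointwise Bochner-type identity shows that the diffusion part produces the coercive Hessian dissipation $-2\int c^\epsilon|D^2\ln c^\epsilon|^2$, the reaction part produces the signed coupling $-\int(n^\epsilon*\rho^\epsilon)|\nabla c^\epsilon|^2/c^\epsilon$ (matching the term $\|n^\epsilon*\rho^\epsilon|\nabla\sqrt{c^\epsilon}|^2\|_{L^1}$ in $\mathcal{G}_1$) together with a cross term $-2\int\nabla c^\epsilon\cdot\nabla(n^\epsilon*\rho^\epsilon)$, while the fluid transport leaves a term bounded by $\int\frac{|\nabla c^\epsilon|^2}{c^\epsilon}|\nabla u^\epsilon|$ (the advective remainder vanishing by incompressibility).

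Adding the three identities with appropriate weights, I would absorb the chemotaxis cross terms against the good coupling term, using the symmetry of the mollifier to match $\int\nabla n^\epsilon\cdot\nabla(c^\epsilon*\rho^\epsilon)$ with $\int\nabla(n^\epsilon*\rho^\epsilon)\cdot\nabla c^\epsilon$; control the fluid-coupling term through $\|\nabla u^\epsilon\|_{L^2}$ and the Hessian dissipation via a $2$D Gagliardo--Nirenberg bound on $\|\nabla\sqrt{c^\epsilon}\|_{L^4}^2$; and control the buoyancy forcing $\int(n^\epsilon*\rho^\epsilon)\nabla\phi\cdot u^\epsilon\le\|\nabla\phi\|_{L^\infty}\|n^\epsilon\|_{L^{4/3}}\|u^\epsilon\|_{L^4}$ uniformly in $\epsilon$, using $\|u^\epsilon\|_{L^4}^2\lesssim\|u^\epsilon\|_{L^2}\|\nabla u^\epsilon\|_{L^2}$, $\|n^\epsilon\|_{L^{4/3}}\lesssim\|n^\epsilon\|_{L^1}^{1/2}\|n^\epsilon\|_{L^2}^{1/2}$, and the interpolation $\|n^\epsilon\|_{L^2}\lesssim\|\nabla\sqrt{n^\epsilon+1}\|_{L^2}\|n^\epsilon\|_{L^1}^{1/2}+\|n^\epsilon\|_{L^1}^{1/2}$. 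The last inequality follows by applying $2$D Gagliardo--Nirenberg to $w=\sqrt{n^\epsilon+1}-1\in H^1$, noting $\|w\|_{L^2}^2\le\|n^\epsilon\|_{L^1}$ and $n^\epsilon=w^2+2w$; since all constants depend only on $\|n_0\|_{L^1}$ and $\|\nabla\phi\|_{L^\infty}$, uniformity in $\epsilon$ is preserved, and Gronwall's lemma closes \eqref{5lem-2} (which in particular yields $\int_0^t\|n^\epsilon\|_{L^2}^2\,dr\lesssim1$).

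The estimates \eqref{5lem-3} and \eqref{5lem-4} then follow by parabolic energy estimates anchored to \eqref{5lem-2}. For $c^\epsilon$, $\|c^\epsilon\|_{L^2}^2\le\|c^\epsilon\|_{L^\infty}\|c^\epsilon\|_{L^1}$ and $\|\nabla c^\epsilon\|_{L^2}^2\le\|c^\epsilon\|_{L^\infty}\int\frac{|\nabla c^\epsilon|^2}{c^\epsilon}\lesssim\|\nabla\sqrt{c^\epsilon}\|_{L^2}^2$ give the $H^1$ bound at once, and testing with $-\Delta c^\epsilon$ while absorbing $\|\Delta c^\epsilon\|_{L^2}^2$ (using $\int_0^t\|n^\epsilon\|_{L^2}^2\lesssim1$) yields $\int_0^t\|c^\epsilon\|_{H^2}^2\lesssim1$. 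For $n^\epsilon$, testing with $n^\epsilon$ gives $\frac12\frac{d}{dt}\|n^\epsilon\|_{L^2}^2+\|\nabla n^\epsilon\|_{L^2}^2=-\frac12\int(n^\epsilon)^2\Delta(c^\epsilon*\rho^\epsilon)$; bounding the right-hand side by $\|n^\epsilon\|_{L^4}^2\|c^\epsilon\|_{H^2}\lesssim(\|\nabla n^\epsilon\|_{L^2}\|n^\epsilon\|_{L^2}+\|n^\epsilon\|_{L^2}^2)\|c^\epsilon\|_{H^2}$, absorbing, and applying Gronwall with $\int_0^t\|c^\epsilon\|_{H^2}^2\lesssim1$ from \eqref{5lem-3}, yields \eqref{5lem-4}. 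I expect the main obstacle to be the $c$-entropy computation: extracting the coercive Hessian dissipation $\int c^\epsilon|D^2\ln c^\epsilon|^2$ with the correct sign from the Bochner identity, and then absorbing the fluid-transport coupling $\int\frac{|\nabla c^\epsilon|^2}{c^\epsilon}|\nabla u^\epsilon|$, which forces a sharp $2$D interpolation of $\|\nabla\sqrt{c^\epsilon}\|_{L^4}^2$ against that dissipation while keeping every constant independent of $\epsilon$.
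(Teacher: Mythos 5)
Your strategy is the same as the paper's: the same entropy functionals, the same weighted combination, the mollifier-symmetry pairing of the chemotaxis cross terms, the absorption of the fluid-transport term $A_1$ into the Hessian dissipation, and essentially the paper's interpolation $\|n^\epsilon\|_{L^2}\lesssim(1+\|\nabla\sqrt{n^\epsilon+1}\|_{L^2})\|n_0\|_{L^1}^{1/2}$ (cf.\ \eqref{abc}) to tame the buoyancy forcing; your treatments of \eqref{5lem-1}, \eqref{5lem-3} and \eqref{5lem-4} are also correct and only cosmetically different from the paper's. However, there is one genuine gap. Your own (correct) entropy identity for $n^\epsilon$ produces the cross term $\int\frac{n^\epsilon}{n^\epsilon+1}\nabla n^\epsilon\cdot\nabla(c^\epsilon*\rho^\epsilon)\,\mathrm{d}x$, while the term it must cancel on the $c^\epsilon$-side is $-\int\nabla(n^\epsilon*\rho^\epsilon)\cdot\nabla c^\epsilon\,\mathrm{d}x=-\int\nabla n^\epsilon\cdot\nabla(c^\epsilon*\rho^\epsilon)\,\mathrm{d}x$ by mollifier symmetry. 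These do \emph{not} cancel exactly: the sum equals $-\int\frac{\nabla n^\epsilon}{n^\epsilon+1}\cdot\nabla(c^\epsilon*\rho^\epsilon)\,\mathrm{d}x=\int\ln(n^\epsilon+1)\,\Delta(c^\epsilon*\rho^\epsilon)\,\mathrm{d}x$, which is precisely the term $A_2$ in \eqref{lem3-6}. The paper devotes the estimate \eqref{lem3-8} to it, and that estimate is not free: it needs the functional inequality \eqref{ww7} (to dominate $\|(c^\epsilon)^{-3/2}|\nabla c^\epsilon|^2\|_{L^2}^2$ and $\|(c^\epsilon)^{-1/2}|D^2 c^\epsilon|\|_{L^2}^2$ by the Hessian dissipation) together with $\|\ln(n^\epsilon+1)\|_{L^2}^2\le\|(n^\epsilon+1)\ln(n^\epsilon+1)\|_{L^1}$ so that Gronwall closes. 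Your proposal treats the cross terms as matching and never confronts this residual; as written, the Gronwall argument for \eqref{5lem-2} does not close. (The gap is fixable, in fact more cheaply than in the paper: since $n^\epsilon+1\ge1$ and $0<c^\epsilon\le\|c_0\|_{L^\infty}$, Cauchy--Schwarz gives $\bigl|\int\frac{\nabla n^\epsilon}{n^\epsilon+1}\cdot\nabla(c^\epsilon*\rho^\epsilon)\,\mathrm{d}x\bigr|\le 4\|c_0\|_{L^\infty}^{1/2}\|\nabla\sqrt{n^\epsilon+1}\|_{L^2}\|\nabla\sqrt{c^\epsilon}\|_{L^2}$, and Young's inequality splits this into a piece absorbed by the dissipation $4\|\nabla\sqrt{n^\epsilon+1}\|_{L^2}^2$ and a piece handled by Gronwall --- but some such argument must appear.)

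Two softer points. First, you call the needed bound $\int(c^\epsilon)^{-3}|\nabla c^\epsilon|^4\,\mathrm{d}x\lesssim\int c^\epsilon|D^2\ln c^\epsilon|^2\,\mathrm{d}x$ a ``2D Gagliardo--Nirenberg bound,'' but a GN interpolation yields $\|\nabla\sqrt{c^\epsilon}\|_{L^4}^2\lesssim\|\nabla\sqrt{c^\epsilon}\|_{L^2}\|D^2\sqrt{c^\epsilon}\|_{L^2}$, and $\|D^2\sqrt{c^\epsilon}\|_{L^2}$ itself contains a multiple of $\|\nabla\sqrt{c^\epsilon}\|_{L^4}^2$, so the absorption fails unless $\|\nabla\sqrt{c^\epsilon}\|_{L^2}$ is small; the correct proof is the integration-by-parts/Cauchy--Schwarz argument giving \eqref{ww8}, with no smallness required. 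Second, the Bochner-type identity producing the coercive term $\int c^\epsilon|D^2\ln c^\epsilon|^2\,\mathrm{d}x$ with the right sign and constant (the paper's \eqref{www}) is asserted rather than derived; it is classical and your stated coefficients are correct, but in a complete proof it must be established, as the paper does through the computations \eqref{ww1}--\eqref{ww5}.
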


\begin{proof}[\emph{\textbf{Proof}}]
Since $(n^{\epsilon},c^{\epsilon},u^{\epsilon})\in \mathcal {C}^1([0,T];H^{s-5}(\mathbb{R}^2))$ (cf. Lemma \ref{lem2}), it follows from the Sobolev embedding $H^{s-5}(\mathbb{R}^2)\hookrightarrow \mathcal{C}^{s-7}_b(\mathbb{R}^2)$ that $(n^{\epsilon}, c^{\epsilon},u^{\epsilon})$ satisfies the equations of \eqref{Mod-1} in the classical sense. By using the $L^2$-energy estimates for $n^{\epsilon}_-=\max\{-n^{\epsilon},0\}$ and $c^{\epsilon}_-=\max\{-c^{\epsilon},0\}$, it is standard to show that $n^{\epsilon}(t,x)\geq0$ and $c^{\epsilon}(t,x)\geq0$ (cf. \cite{nie2020global}). Moreover, due to the parabolic structure of the $n^{\epsilon},~c^{\epsilon}$-equations, it follows from the strong maximum principle (cf. \cite[Theorem 2.7]{lieberman1996second}) that $n^{\epsilon}(t,x)>0,~c^{\epsilon}(t,x)>0$ in $[0,T]\times \mathbb{R}^2$. Now let us derive the a priori uniform bound \eqref{5lem-2}. At first, based on the structure of $n^{\epsilon}$-equation and $c^{\epsilon}$-equation, it is straightforward to get \eqref{5lem-1}. By applying the chain rule to the function $(n^\epsilon+1)\ln(n^\epsilon+1)$ and integrating by parts, we obtain
\begin{equation} \begin{split}\label{lem3-1}
&\frac{\mathrm{d}}{\mathrm{d}t}\|(n^\epsilon+1)\ln(n^\epsilon+1)\|_{L^1} + 4\|\nabla\sqrt{n^\epsilon+1}\|^2_{L^2} \\
&= \int_{\mathbb{R}^2}\nabla n^\epsilon(x)\cdot\nabla (c^{\epsilon}*\rho^{\epsilon}(x))\,\mathrm{d}x + \int_{\mathbb{R}^2}\Delta ( c^{\epsilon}*\rho^{\epsilon}(x))\ln(n^\epsilon(x)+1)\,\mathrm{d}x.
\end{split} \end{equation}
Note that $\Delta c^{\epsilon} = 2|\nabla\sqrt{c^{\epsilon}}|^2 + 2\sqrt{c^{\epsilon}}\Delta\sqrt{c^{\epsilon}}$, so the second equation in \eqref{Mod-1} can be rewritten as
\begin{equation*} \begin{split}
\frac{\mathrm{d}}{\mathrm{d}t}\sqrt{c^{\epsilon}} + u^{\epsilon}\cdot\nabla\sqrt{c^{\epsilon}} = (\sqrt{c^{\epsilon}})^{-1}|\nabla\sqrt{c^{\epsilon}}|^2 + \Delta\sqrt{c^{\epsilon}} - \frac{1}{2}(n^{\epsilon}*\rho^{\epsilon})\sqrt{c^{\epsilon}}.
\end{split} \end{equation*}
Multiplying both sides of the above equation by $-4\Delta\sqrt{c^{\epsilon}}$ and integrating over $\mathbb{R}^2$, we deduce
\begin{equation} \begin{split}\label{lem3-2}
&2 \frac{\mathrm{d}}{\mathrm{d}t}\|\nabla\sqrt{c^{\epsilon}}\|_{L^2}^2 \\
&= \frac{1}{2}\int_{\mathbb{R}^2}(c^{\epsilon}(x))^{-2}|\nabla c^{\epsilon}(x)|^2\Delta c^{\epsilon}(x)\,\mathrm{d}x - \int_{\mathbb{R}^2}(c^{\epsilon}(x))^{-1}|\Delta c^{\epsilon}(x)|^2\,\mathrm{d}x \\
&\quad - \frac{1}{2}\int_{\mathbb{R}^2}(c^{\epsilon}(x))^{-1}n^{\epsilon}*\rho^{\epsilon}(x)|\nabla c^{\epsilon}(x)|^2\,\mathrm{d}x - \int_{\mathbb{R}^2}\nabla(n^{\epsilon}*\rho^{\epsilon}(x))\cdot\nabla c^{\epsilon}(x)\,\mathrm{d}x \\
&\quad + 4\int_{\mathbb{R}^2}\Delta \sqrt{c^{\epsilon}(x)}(u^{\epsilon}(x)\cdot\nabla \sqrt{c^{\epsilon}(x)})\,\mathrm{d}x.
\end{split} \end{equation}
We now show that the first two integrals on the right-hand side of \eqref{lem3-2} satisfy
\begin{equation} \begin{split}\label{www}
&\frac{1}{2}\int_{\mathbb{R}^2}(c^{\epsilon}(x))^{-2}|\nabla c^{\epsilon}(x)|^2\Delta c^{\epsilon}(x)\,\mathrm{d}x - \int_{\mathbb{R}^2}(c^{\epsilon}(x))^{-1}|\Delta c^{\epsilon}(x)|^2\,\mathrm{d}x \\
&= -\int_{\mathbb{R}^2}c^{\epsilon}(x)|D^2 \ln c^{\epsilon}(x)|^2\,\mathrm{d}x.
\end{split} \end{equation}
Indeed, integrating by parts yields
\begin{equation} \begin{split}\label{ww1}
&\int_{\mathbb{R}^2}(c^{\epsilon}(x))^{-1}\nabla c^{\epsilon}(x)\cdot\nabla\Delta c^{\epsilon}(x)\,\mathrm{d}x \\
&= \int_{\mathbb{R}^2}(c^{\epsilon}(x))^{-2}|\nabla c^{\epsilon}(x)|^2\Delta c^{\epsilon}(x)\,\mathrm{d}x - \int_{\mathbb{R}^2}(c^{\epsilon}(x))^{-1}|\Delta c^{\epsilon}(x)|^2\,\mathrm{d}x.
\end{split} \end{equation}
and
\begin{equation} \begin{split}\label{ww2}
&2\int_{\mathbb{R}^2}(c^{\epsilon}(x))^{-1}\nabla c^{\epsilon}(x)\cdot\nabla\Delta c^{\epsilon}(x)\,\mathrm{d}x = 2\int_{\mathbb{R}^2}(c^{\epsilon}(x))^{-1}\nabla c^{\epsilon}(x)\cdot \text{div}(D^2 c^{\epsilon}(x))\,\mathrm{d}x \\
&= -2\int_{\mathbb{R}^2}(c^{\epsilon}(x))^{-1}|D^2 c^{\epsilon}(x)|^2\,\mathrm{d}x + 2\int_{\mathbb{R}^2}(c^{\epsilon}(x))^{-2}(D^2 c^{\epsilon}(x)\cdot\nabla c^{\epsilon}(x))\cdot\nabla c^{\epsilon}(x)\,\mathrm{d}x.
\end{split} \end{equation}
Moreover, direct calculation gives
\begin{equation} \begin{split}\label{ww3}
&\int_{\mathbb{R}^2}(c^{\epsilon}(x))^{-2}|\nabla c^{\epsilon}(x)|^2\Delta c^{\epsilon}(x)\,\mathrm{d}x \\
&= -\int_{\mathbb{R}^2}\nabla((c^{\epsilon}(x))^{-2}|\nabla c^{\epsilon}(x)|^2)\cdot\nabla c^{\epsilon}(x)\,\mathrm{d}x \\
&= 2\int_{\mathbb{R}^2}(c^{\epsilon}(x))^{-2}|\nabla c^{\epsilon}(x)|^4\,\mathrm{d}x - 2\int_{\mathbb{R}^2}(c^{\epsilon}(x))^{-2}(D^2 c^{\epsilon}(x)\cdot\nabla c^{\epsilon}(x))\cdot\nabla c^{\epsilon}(x)\,\mathrm{d}x.
\end{split} \end{equation}
Through the algebraic operations $\eqref{ww2}+\eqref{ww3}-2\times\eqref{ww1}$, we obtain
\begin{equation} \begin{split}\label{ww4}
&\int_{\mathbb{R}^2}(c^{\epsilon}(x))^{-2}|\nabla c^{\epsilon}(x)|^2\Delta c^{\epsilon}(x)\,\mathrm{d}x \\
&= -\frac{2}{3}\int_{\mathbb{R}^2}(c^{\epsilon}(x))^{-1}|D^2 c^{\epsilon}(x)|^2\,\mathrm{d}x + \frac{2}{3}\int_{\mathbb{R}^2}(c^{\epsilon}(x))^{-3}|\nabla c^{\epsilon}(x)|^4\,\mathrm{d}x + \frac{2}{3}\int_{\mathbb{R}^2}(c^{\epsilon}(x))^{-1}|\Delta c^{\epsilon}(x)|^2\,\mathrm{d}x.
\end{split} \end{equation}
On the other hand, a direct computation shows
\begin{equation*} \begin{split}
&\int_{\mathbb{R}^2}c^{\epsilon}(x)|D^2 \ln c^{\epsilon}(x)|^2\,\mathrm{d}x \\
&= \int_{\mathbb{R}^2}(c^{\epsilon}(x))^{-1}|D^2 c^{\epsilon}(x)|^2\,\mathrm{d}x - 2\int_{\mathbb{R}^2}(c^{\epsilon}(x))^{-2}(D^2 c^{\epsilon}(x)\cdot\nabla c^{\epsilon}(x))\cdot\nabla c^{\epsilon}(x)\,\mathrm{d}x \\
&\quad + \int_{\mathbb{R}^2}(c^{\epsilon}(x))^{-3}|\nabla c^{\epsilon}(x)|^4\,\mathrm{d}x,
\end{split} \end{equation*}
which, together with \eqref{ww3}, yields
\begin{equation} \begin{split}\label{ww5}
&\int_{\mathbb{R}^2}c^{\epsilon}(x)|D^2 \ln c^{\epsilon}(x)|^2\,\mathrm{d}x \\
&= \int_{\mathbb{R}^2}(c^{\epsilon}(x))^{-1}|D^2 c^{\epsilon}(x)|^2\,\mathrm{d}x - \int_{\mathbb{R}^2}(c^{\epsilon}(x))^{-3}|\nabla c^{\epsilon}(x)|^4\,\mathrm{d}x + \int_{\mathbb{R}^2}(c^{\epsilon}(x))^{-2}|\nabla c^{\epsilon}(x)|^2\Delta c^{\epsilon}(x)\,\mathrm{d}x.
\end{split} \end{equation}
The identity \eqref{www} now follows by computing $\frac{3}{2}\times$\eqref{ww4} $+\,\,\eqref{ww5}$. As a result, equation \eqref{lem3-2} can be reformulated as
\begin{equation} \begin{split}\label{ww6}
&2 \frac{\mathrm{d}}{\mathrm{d}t}\|\nabla\sqrt{c^{\epsilon}}\|_{L^2}^2 + \int_{\mathbb{R}^2}c^{\epsilon}(x)|D^2 \ln c^{\epsilon}(x)|^2\,\mathrm{d}x \\
&= -\frac{1}{2}\int_{\mathbb{R}^2}(c^{\epsilon}(x))^{-1}n^{\epsilon}*\rho^{\epsilon}(x)|\nabla c^{\epsilon}(x)|^2\,\mathrm{d}x - \int_{\mathbb{R}^2}\nabla(n^{\epsilon}*\rho^{\epsilon}(x))\cdot\nabla c^{\epsilon}(x)\,\mathrm{d}x \\
&\quad + 4\int_{\mathbb{R}^2}\Delta \sqrt{c^{\epsilon}(x)}(u^{\epsilon}(x)\cdot\nabla \sqrt{c^{\epsilon}(x)})\,\mathrm{d}x.
\end{split} \end{equation}

Adding $\eqref{ww6}$ to \eqref{lem3-1} and noting the identity $(\nabla (n^{\epsilon}*\rho^{\epsilon}),\nabla c^{\epsilon})_{L^2}=(\nabla n^{\epsilon},\nabla (c^{\epsilon}*\rho^{\epsilon}))_{L^2}$, we obtain
\begin{equation} \begin{split}\label{lem3-6}
& \frac{\mathrm{d}}{\mathrm{d}t}\|(n^\epsilon+1)\ln(n^\epsilon+1)\|_{L^1}+2 \frac{\mathrm{d}}{\mathrm{d}t}\|\nabla\sqrt{c^{\epsilon}}\|_{L^2}^2+4\|\nabla\sqrt{n^{\epsilon}+1}\|^2_{L^2}\\
&\quad+\int_{\mathbb{R}^2}c^{\epsilon}(x)|D^2 \ln c^{\epsilon}(x)|^2\mathrm{d}x+\frac{1}{2}\int_{\mathbb{R}^2}(c^{\epsilon}(x))^{-1}(n^{\epsilon}*\rho^{\epsilon})(x)|\nabla c^{\epsilon}(x)|^2\mathrm{d}x\\
&=4\int_{\mathbb{R}^2}\Delta \sqrt{c^{\epsilon}(x)}(u^{\epsilon}(x)\cdot\nabla \sqrt{c^{\epsilon}(x)})\mathrm{d}x+\int_{\mathbb{R}^2}\Delta (c^{\epsilon}*\rho^{\epsilon}(x))\ln(n^{\epsilon}(x)+1)\mathrm{d}x\\
&:=A_1+A_2.
\end{split} \end{equation}
To estimate the terms $A_1$ and $A_2$, we claim that
\begin{equation} \begin{split}\label{ww7}
\int_{\mathbb{R}^2}(c^{\epsilon}(x))^{-3}|\nabla c^{\epsilon}(x)|^4\mathrm{d}x+\int_{\mathbb{R}^2}(c^{\epsilon}(x))^{-1}|D^2 c^{\epsilon}(x)|^2\mathrm{d}x\lesssim\int_{\mathbb{R}^2}c^{\epsilon}(x)|D^2 \ln c^{\epsilon}(x)|^2\mathrm{d}x.
\end{split} \end{equation}
Indeed, by integration by parts and H\"{o}lder's inequality, we compute
\begin{equation*} \begin{split}
&\int_{\mathbb{R}^2}(c^{\epsilon}(x))^{-3}|\nabla c^{\epsilon}(x)|^4\mathrm{d}x=\int_{\mathbb{R}^2}|\nabla \ln c^{\epsilon}(x)|^2\nabla \ln c^{\epsilon}(x)\cdot \nabla c^{\epsilon}(x)\mathrm{d}x\\
&=-\int_{\mathbb{R}^2}c^{\epsilon}(x)\nabla|\nabla \ln c^{\epsilon}(x)|^2\cdot\nabla \ln c^{\epsilon}(x) \mathrm{d}x-\int_{\mathbb{R}^2}c^{\epsilon}(x)|\nabla \ln c^{\epsilon}(x)|^2\Delta \ln c^{\epsilon}(x) \mathrm{d}x\\
&=-2\int_{\mathbb{R}^2}(c^{\epsilon}(x))^{-1}(D^2\ln c^{\epsilon}(x)\cdot\nabla c^{\epsilon}(x))\cdot\nabla c^{\epsilon}(x) \mathrm{d}x\\
&\quad-\int_{\mathbb{R}^2}(c^{\epsilon}(x))^{-1}|\nabla c^{\epsilon}(x)|^2\Delta \ln c^{\epsilon}(x) \mathrm{d}x\\
&\leq2\|(c^{\epsilon})^{\frac{1}{2}}|D^2\ln c^{\epsilon}|\|_{L^2}\|(c^{\epsilon})^{-\frac{3}{2}}|\nabla c^{\epsilon}|^2\|_{L^2}+\|(c^{\epsilon})^{\frac{1}{2}}|\Delta\ln c^{\epsilon}|\|_{L^2}\|(c^{\epsilon})^{-\frac{3}{2}}|\nabla c^{\epsilon}|^2\|_{L^2}\\
&\leq5\|(c^{\epsilon})^{\frac{1}{2}}|D^2\ln c^{\epsilon}|\|_{L^2}\|(c^{\epsilon})^{-\frac{3}{2}}|\nabla c^{\epsilon}|^2\|_{L^2},
\end{split} \end{equation*}
which implies
\begin{equation} \begin{split}\label{ww8}
\int_{\mathbb{R}^2}(c^{\epsilon}(x))^{-3}|\nabla c^{\epsilon}(x)|^4\mathrm{d}x
\leq25\|(c^{\epsilon})^{\frac{1}{2}}|D^2\ln c^{\epsilon}|\|_{L^2}^2.
\end{split} \end{equation}
Moreover, using the inequality $(f+g)^2\geq\frac{1}{2}f^2-g^2$, we deduce
\begin{equation*} \begin{split}
c^{\epsilon}|\partial_{ij}\ln c^{\epsilon}|^2&=c^{\epsilon}|(c^{\epsilon})^{-1}\partial_{ij} c^{\epsilon}-(c^{\epsilon})^{-2}\partial_{i} c^{\epsilon}\partial_{j} c^{\epsilon}|^2\\
&\geq\frac{1}{2}c^{\epsilon}|(c^{\epsilon})^{-1}\partial_{ij} c^{\epsilon}|^2-c^{\epsilon}|(c^{\epsilon})^{-2}\partial_{i} c^{\epsilon}\partial_{j} c^{\epsilon}|^2\\
&=\frac{1}{2}(c^{\epsilon})^{-1}|\partial_{ij} c^{\epsilon}|^2-(c^{\epsilon})^{-3}|\partial_{i} c^{\epsilon}\partial_{j} c^{\epsilon}|^2,\quad 1\leq i,~j\leq2,
\end{split} \end{equation*}
which combined with \eqref{ww8} gives
\begin{equation*} \begin{split}
\int_{\mathbb{R}^2}(c^{\epsilon}(x))^{-1}|D^2 c^{\epsilon}(x)|^2\mathrm{d}x&\lesssim\int_{\mathbb{R}^2}(c^{\epsilon}(x))^{-3}|\nabla c^{\epsilon}(x)|^4\mathrm{d}x+\int_{\mathbb{R}^2}c^{\epsilon}(x)|D^2 \ln c^{\epsilon}(x)|^2\mathrm{d}x\\
&\lesssim\int_{\mathbb{R}^2}c^{\epsilon}(x)|D^2 \ln c^{\epsilon}(x)|^2\mathrm{d}x.
\end{split} \end{equation*}
This proves the inequality \eqref{ww7}. Now, we proceed to estimate the terms $A_1$ and $A_2$ in \eqref{lem3-6}. By applying the divergence-free condition $\nabla\cdot u^{\epsilon}=0$ and Young's inequality, it follows from \eqref{ww7} that for any $\eta>0$,
\begin{equation} \begin{split}\label{ww9}
A_1&=-4\int_{\mathbb{R}^2}(\nabla\sqrt{c^{\epsilon}(x)}\cdot\nabla)u^{\epsilon}(x)\cdot\nabla \sqrt{c^{\epsilon}(x)}\mathrm{d}x\\
&\leq4\int_{\mathbb{R}^2}\sqrt{c^{\epsilon}(x)}|\nabla u^{\epsilon}(x)|(c^{\epsilon}(x))^{-\frac{1}{2}}|\nabla\sqrt{c^{\epsilon}(x)}|^2\mathrm{d}x\\
&\leq\eta\int_{\mathbb{R}^2}(c^{\epsilon}(x))^{-3}|\nabla c^{\epsilon}(x)|^4\mathrm{d}x+C_{\eta}\|c_0\|_{L^{\infty}}\|\nabla u^{\epsilon}\|_{L^2}^2\\
&\leq C_1\eta\int_{\mathbb{R}^2}c^{\epsilon}(x)|D^2 \ln c^{\epsilon}(x)|^2\mathrm{d}x+C_{\eta,\|c_0\|_{L^{\infty}}}\|\nabla u^{\epsilon}\|_{L^2}^2.
\end{split} \end{equation}
Noting that
\begin{equation*} \begin{split}
\|\Delta (c^{\epsilon}*\rho^{\epsilon})\|_{L^2}\leq \|\Delta c^{\epsilon}\|_{L^2},\quad\Delta c^{\epsilon}=2|\nabla\sqrt{c^{\epsilon}}|^2+2\sqrt{c^{\epsilon}}\Delta\sqrt{c^{\epsilon}},
\end{split} \end{equation*}
we infer from Young's inequality and \eqref{ww7} that
\begin{equation} \begin{split}\label{lem3-8}
A_2&\leq\frac{\eta}{\|c_0\|_{L^{\infty}}}\|\Delta c^{\epsilon}\|_{L^2}^2+C_{\eta,\|c_0\|_{L^{\infty}}}\|\ln(n^{\epsilon}+1)\|_{L^2}^2\\
&\leq\frac{8\eta}{\|c_0\|_{L^{\infty}}}\int_{\mathbb{R}^2}(|\nabla\sqrt{c^{\epsilon}(x)}|^4+c^{\epsilon}(x)|\Delta\sqrt{c^{\epsilon}(x)}|^2)\mathrm{d}x\\
&\quad+C_{\eta,\|c_0\|_{L^{\infty}}}\int_{\mathbb{R}^2}(n^{\epsilon}(x)+1)\ln(n^{\epsilon}(x)+1)\mathrm{d}x\\
&\leq8\eta\|(\sqrt{c^{\epsilon}})^{-1}|\nabla\sqrt{c^{\epsilon}}|^2\|_{L^2}^2
+8\eta\|\Delta\sqrt{c^{\epsilon}}\|_{L^2}^2+C_{\eta,\|c_0\|_{L^{\infty}}}\|(n^{\epsilon}+1)\ln(n^{\epsilon}+1)\|_{L^1}\\
&\leq C\eta\|(c^{\epsilon})^{-\frac{3}{2}}|\nabla c^{\epsilon}|^2\|_{L^2}^2+C\eta\|(c^{\epsilon})^{-\frac{1}{2}}|D^2 c^{\epsilon}|\|_{L^2}^2+C_{\eta,\|c_0\|_{L^{\infty}}}\|(n^{\epsilon}+1)\ln(n^{\epsilon}+1)\|_{L^1}\\
&\leq C_2\eta\int_{\mathbb{R}^2}c^{\epsilon}(x)|D^2 \ln c^{\epsilon}(x)|^2\mathrm{d}x+C_{\eta,\|c_0\|_{L^{\infty}}}\|(n^{\epsilon}+1)\ln(n^{\epsilon}+1)\|_{L^1}.
\end{split} \end{equation}
Plugging the estimates \eqref{ww9} and \eqref{lem3-8} into \eqref{lem3-6} and choosing $\eta > 0$ small enough such that $\eta \leq \frac{1}{2(C_1 + C_2)}$, we obtain
\begin{equation} \begin{split}\label{lem3-9}
& \frac{\mathrm{d}}{\mathrm{d}t}\|(n^\epsilon+1)\ln(n^\epsilon+1)\|_{L^1} + 2 \frac{\mathrm{d}}{\mathrm{d}t}\|\nabla\sqrt{c^{\epsilon}}\|_{L^2}^2 + 4\|\nabla\sqrt{n^{\epsilon}+1}\|^2_{L^2} \\
&\quad + \frac{1}{2} \int_{\mathbb{R}^2} c^{\epsilon}(x)|D^2 \ln c^{\epsilon}(x)|^2\,\mathrm{d}x + \frac{1}{2} \int_{\mathbb{R}^2} (c^{\epsilon}(x))^{-1} (n^{\epsilon}*\rho^{\epsilon})(x) |\nabla c^{\epsilon}(x)|^2\,\mathrm{d}x \\
&\leq C_{\|c_0\|_{L^{\infty}}} \|\nabla u^{\epsilon}\|_{L^2}^2 + C_{\|c_0\|_{L^{\infty}}} \|(n^{\epsilon}+1)\ln(n^{\epsilon}+1)\|_{L^1}.
\end{split} \end{equation}
Moreover, we have
\begin{equation} \begin{split} \label{lem3-12}
\|u^{\epsilon}(t)\|_{L^2}^2 + \int_0^t \|\nabla u^{\epsilon}(r)\|_{L^2}^2\,\mathrm{d}r
\leq \|u^{\epsilon}(0)\|_{L^2}^2 + \int_0^t \|\nabla\phi\|_{L^{\infty}} \|n^{\epsilon}(r)\|_{L^2} \|u^{\epsilon}(r)\|_{L^2}\,\mathrm{d}r.
\end{split} \end{equation}
To estimate $\|n^{\epsilon}\|_{L^2}$, we note that
\begin{equation} \begin{split} \label{abc}
\|n^{\epsilon}\|_{L^2}
&\lesssim \|n^{\epsilon} \mathbf{1}_{\{0<n^{\epsilon}<1\}}\|_{L^2} + \|n^{\epsilon} \mathbf{1}_{\{1 \leq n^{\epsilon}\}}\|_{L^2} \\
&\lesssim \|n^{\epsilon}\|_{L^1}^{1/2} + \left\|(n^{\epsilon})^{1/2} \mathbf{1}_{\{1 \leq n^{\epsilon}\}}\right\|_{L^4}^2 \\
&\lesssim \|n^{\epsilon}\|_{L^1}^{1/2} + \|\nabla((n^{\epsilon})^{1/2} \mathbf{1}_{\{1 \leq n^{\epsilon}\}})\|_{L^2} \cdot \left\|(n^{\epsilon})^{1/2} \mathbf{1}_{\{1 \leq n^{\epsilon}\}}\right\|_{L^2} \\
&\lesssim \left(1 + \|\nabla\sqrt{n^{\epsilon}+1}\|_{L^2} \right) \|n^{\epsilon}\|_{L^1}^{1/2}.
\end{split} \end{equation}
Using Young's inequality, we infer from \eqref{lem3-12} and \eqref{abc} that
\begin{equation} \begin{split} \label{lem3-122}
\|u^{\epsilon}(t)\|_{L^2}^2 + \int_0^t \|\nabla u^{\epsilon}(r)\|_{L^2}^2\,\mathrm{d}r
&\leq \|u^{\epsilon}(0)\|_{L^2}^2 + \eta t + \eta \int_0^t \|\nabla\sqrt{n^{\epsilon}(r)+1}\|_{L^2}^2\,\mathrm{d}r \\
&\quad + C_{\eta, \|n_0\|_{L^1}, \|\nabla\phi\|_{L^{\infty}}} \int_0^t \|u^{\epsilon}(r)\|_{L^2}^2\,\mathrm{d}r.
\end{split} \end{equation}
Multiplying both sides of \eqref{lem3-122} by $C_{\|c_0\|_{L^{\infty}}}$ and choosing $\eta \ll \frac{1}{C_{\|c_0\|_{L^{\infty}}}}$, we deduce from \eqref{lem3-9} and the equivalence
$
\|(n^\epsilon+1) \ln(n^\epsilon+1)\|_{L^1} \asymp \|n^\epsilon\|_{L^1 \cap L\ln L} = \|n^\epsilon\|_{L^1} + \|n^\epsilon\|_{L \ln L}
$
that
\begin{equation*} \begin{split}
\mathcal{F}_1(n^{\epsilon}, c^{\epsilon}, u^{\epsilon})(t) + \int_0^t \mathcal{G}_1(n^{\epsilon}, c^{\epsilon}, u^{\epsilon})(r)\,\mathrm{d}r
\lesssim_{n_0, c_0, u_0, \phi} \mathcal{F}_1(n^{\epsilon}, c^{\epsilon}, u^{\epsilon})(0) + t + \int_0^t \mathcal{F}_1(n^{\epsilon}, c^{\epsilon}, u^{\epsilon})(r)\,\mathrm{d}r.
\end{split} \end{equation*}
The desired estimate \eqref{5lem-2} then follows from Gronwall's inequality. Moreover, by using the estimates \eqref{5lem-2} and \eqref{ww7}, it follows that
\begin{equation*} \begin{split}
\sup_{r\in[0,t]} \|\nabla c^{\epsilon}(r)\|_{L^2}^{2}
&\lesssim \sup_{r\in[0,t]} \|\sqrt{c^{\epsilon}(r)}\|_{L^{\infty}}^2 \sup_{r\in[0,t]} \|\nabla\sqrt{c^{\epsilon}(r)}\|_{L^2}^{2} \\
&\lesssim \|c_0\|_{L^{\infty}} \sup_{r\in[0,t]} \|\nabla\sqrt{c^{\epsilon}(r)}\|_{L^2}^{2} \\
&\lesssim_{n_0, c_0, u_0, \phi, t} 1,
\end{split} \end{equation*}
and
\begin{equation*} \begin{split}
\int_0^t \|\Delta c^{\epsilon}(r)\|_{L^2}^2\,\mathrm{d}r
&\lesssim \|c_0\|_{L^{\infty}} \int_0^t \left( \|(\sqrt{c^{\epsilon}(r)})^{-1} |\nabla\sqrt{c^{\epsilon}(r)}|^2\|_{L^2}^2 + \|\Delta\sqrt{c^{\epsilon}(r)}\|_{L^2}^2 \right)\,\mathrm{d}r \\
&\lesssim \|c_0\|_{L^{\infty}} \int_0^t \|\sqrt{c^{\epsilon}(r)} |D^2 \ln c^{\epsilon}(r)|\|_{L^2}^2\,\mathrm{d}r \\
&\lesssim_{n_0, c_0, u_0, \phi, t} 1,
\end{split} \end{equation*}
which implies the estimate \eqref{5lem-3}.
Furthermore, we derive from the $n^{\epsilon}$-equation that
\begin{equation*} \begin{split}
\frac{1}{2} \frac{\mathrm{d}}{\mathrm{d}t} \|n^{\epsilon}\|_{L^2}^2 + \|\nabla n^{\epsilon}\|_{L^2}^2
&= (n^{\epsilon} \nabla (c^{\epsilon} * \rho^{\epsilon}), \nabla n^{\epsilon})_{L^2} \\
&\leq \|n^{\epsilon}\|_{L^4} \|\nabla c^{\epsilon}\|_{L^4} \|\nabla n^{\epsilon}\|_{L^2} \\
&\leq \frac{1}{4} \|\nabla n^{\epsilon}\|_{L^2}^2 + C \|n^{\epsilon}\|_{L^2} \|\nabla n^{\epsilon}\|_{L^2} \|\nabla c^{\epsilon}\|_{L^2} \|\Delta c^{\epsilon}\|_{L^2} \\
&\leq \frac{1}{2} \|\nabla n^{\epsilon}\|_{L^2}^2 + C \|\nabla c^{\epsilon}\|_{L^2}^2 \|\Delta c^{\epsilon}\|_{L^2}^2 \|n^{\epsilon}\|_{L^2}^2.
\end{split} \end{equation*}
By using the Gronwall lemma, we derive from \eqref{5lem-3} that \eqref{5lem-4} holds. The proof is thus completed.
\end{proof}

Based on the lower-order uniform bounded estimates provided by Lemma \ref{5lem}, we can further enhance the time-space regularity of the solution $(n^{\epsilon},c^{\epsilon},u^{\epsilon})$ through energy estimates.
\begin{lemma} \label{5pro}
Let the assumptions in Theorem \ref{th1} be satisfied. Let $(n^\epsilon,c^\epsilon,u^\epsilon)$ be the global solution of \eqref{Mod-1} with $\epsilon \in (0,1)$. Then the following estimates hold:
\begin{align}
&\|u^{\epsilon}(t)\|_{H^1}^2+\int_0^t\|u^{\epsilon}(r)\|_{H^2}^2\mathrm{d}r\lesssim_{n_0,c_0,u_0,\phi,t} 1,\label{5pro-1}\\
&\|c^{\epsilon}(t)\|_{H^2}^2+\int_0^t\|c^{\epsilon}(r)\|_{H^3}^2\mathrm{d}r\lesssim_{n_0,c_0,u_0,\phi,t} 1,\label{5pro-2}\\
&\|n^{\epsilon}(t)\|_{H^1}^2+\int_0^t\|n^{\epsilon}(r)\|_{H^2}^2\mathrm{d}r\lesssim_{n_0,c_0,u_0,\phi,t} 1.\label{5pro-3}
\end{align}
\end{lemma}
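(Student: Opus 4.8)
The plan is to establish the three bounds by second-order energy estimates that are uniform in $\epsilon$, exploiting the dissipative (parabolic) structure of each equation and feeding in the low-order information already secured in Lemma~\ref{5lem}. Throughout I would write $\tilde c = c^\epsilon*\rho^\epsilon$ and $m = n^\epsilon*\rho^\epsilon$, and use repeatedly that convolution with the mollifier does not increase any Sobolev norm, i.e. $\|\tilde c\|_{H^k}\le\|c^\epsilon\|_{H^k}$ and $\|m\|_{H^k}\le\|n^\epsilon\|_{H^k}$, so that all constants stay independent of $\epsilon$. Since the approximate solution is smooth (it lies in $\mathcal C^1([0,T];\textbf{H}^{s-5})$ with $s\ge 100$ by Lemma~\ref{lem2}), every integration by parts below is legitimate.

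First I would prove \eqref{5pro-1}, which decouples from the other two. Testing the $u^\epsilon$-equation with $-\Delta u^\epsilon$ (noting that $-\Delta u^\epsilon$ is divergence-free, so the Leray projection acts trivially and the pressure disappears) gives
\[
\tfrac12\tfrac{d}{dt}\|\nabla u^\epsilon\|_{L^2}^2+\|\Delta u^\epsilon\|_{L^2}^2=\bigl((u^\epsilon\cdot\nabla)u^\epsilon,\Delta u^\epsilon\bigr)_{L^2}-\bigl((n^\epsilon\nabla\phi)*\rho^\epsilon,\Delta u^\epsilon\bigr)_{L^2}.
\]
For the convective term I would use the two-dimensional Gagliardo--Nirenberg inequalities $\|u^\epsilon\|_{L^4}\lesssim\|u^\epsilon\|_{L^2}^{1/2}\|\nabla u^\epsilon\|_{L^2}^{1/2}$ and $\|\nabla u^\epsilon\|_{L^4}\lesssim\|\nabla u^\epsilon\|_{L^2}^{1/2}\|\Delta u^\epsilon\|_{L^2}^{1/2}$, followed by Young's inequality, to bound it by $\tfrac14\|\Delta u^\epsilon\|_{L^2}^2+C\|u^\epsilon\|_{L^2}^2\|\nabla u^\epsilon\|_{L^2}^4$; the forcing term is absorbed into $\tfrac14\|\Delta u^\epsilon\|_{L^2}^2+C\|\nabla\phi\|_{L^\infty}^2\|n^\epsilon\|_{L^2}^2$. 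After absorbing the dissipation, Gronwall's lemma applies with the coefficient $\|u^\epsilon\|_{L^2}^2\|\nabla u^\epsilon\|_{L^2}^2$, which lies in $L^1(0,t)$ because $\|u^\epsilon\|_{L^2}$ is bounded and $\int_0^t\|\nabla u^\epsilon\|_{L^2}^2\lesssim 1$ by \eqref{5lem-2}, while the source $\|n^\epsilon\|_{L^2}^2$ is integrable by \eqref{5lem-4}. This yields \eqref{5pro-1}.

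Next I would prove \eqref{5pro-2} and \eqref{5pro-3} \emph{together}, since the cross-diffusion and the consumption term couple them at top order. Testing the $n^\epsilon$-equation with $-\Delta n^\epsilon$ and the $c^\epsilon$-equation with $\Delta^2 c^\epsilon$ produces the dissipations $\|\Delta n^\epsilon\|_{L^2}^2$ and $\|\nabla\Delta c^\epsilon\|_{L^2}^2$ on the left. The transport terms are handled as in Step~1: $(u^\epsilon\cdot\nabla n^\epsilon,\Delta n^\epsilon)$ is bounded by $\tfrac18\|\Delta n^\epsilon\|_{L^2}^2+C\|u^\epsilon\|_{L^2}^2\|\nabla u^\epsilon\|_{L^2}^2\|\nabla n^\epsilon\|_{L^2}^2$, and the analogous $c^\epsilon$-term, after integrating by parts and using $\nabla\cdot u^\epsilon=0$ (which kills $(u^\epsilon\cdot\nabla\Delta c^\epsilon,\Delta c^\epsilon)$), leaves pieces estimated by $\delta\|\nabla\Delta c^\epsilon\|_{L^2}^2$ plus $C_\delta\|\Delta u^\epsilon\|_{L^2}^2(\|\nabla c^\epsilon\|_{L^2}^2+\|\Delta c^\epsilon\|_{L^2}^2)$, whose coefficient is time-integrable by \eqref{5pro-1}. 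The delicate terms are the nonlinear couplings: expanding $\nabla\cdot(n^\epsilon\nabla\tilde c)=\nabla n^\epsilon\cdot\nabla\tilde c+n^\epsilon\Delta\tilde c$ and $\Delta(c^\epsilon m)=m\,\Delta c^\epsilon+2\nabla c^\epsilon\cdot\nabla m+c^\epsilon\Delta m$, the worst contributions are $n^\epsilon\Delta\tilde c$ in the $n$-estimate and $c^\epsilon\Delta m$ in the $c$-estimate, which after Gagliardo--Nirenberg generate, respectively, a term $\delta\|\nabla\Delta c^\epsilon\|_{L^2}^2$ and a term $\delta\|\Delta n^\epsilon\|_{L^2}^2$. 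Here I would crucially invoke the uniform bound $\|c^\epsilon\|_{L^\infty}\le\|c_0\|_{L^\infty}$ from \eqref{5lem-1} to keep every estimate $\epsilon$-uniform.

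The main obstacle is precisely this two-way coupling: the $c^\epsilon$-estimate ``borrows'' a small multiple of the $n^\epsilon$-dissipation $\|\Delta n^\epsilon\|_{L^2}^2$, while the $n^\epsilon$-estimate borrows a small multiple of the $c^\epsilon$-dissipation $\|\nabla\Delta c^\epsilon\|_{L^2}^2$. The resolution is to add the two differential inequalities and choose $\delta$ small enough that both borrowed terms are absorbed by the genuine dissipations on the left, leaving a single inequality of the form
\[
\tfrac{d}{dt}\bigl(\|\nabla n^\epsilon\|_{L^2}^2+\|\Delta c^\epsilon\|_{L^2}^2\bigr)+\kappa\bigl(\|\Delta n^\epsilon\|_{L^2}^2+\|\nabla\Delta c^\epsilon\|_{L^2}^2\bigr)\le a(t)\bigl(\|\nabla n^\epsilon\|_{L^2}^2+\|\Delta c^\epsilon\|_{L^2}^2\bigr)+b(t),
\]
where $a(t)$ and $b(t)$ are built from quantities such as $\|u^\epsilon\|_{L^2}^2\|\nabla u^\epsilon\|_{L^2}^2$, $\|\Delta u^\epsilon\|_{L^2}^2$, $\|\nabla c^\epsilon\|_{L^2}^2\|\Delta c^\epsilon\|_{L^2}^2$ and $\|n^\epsilon\|_{L^2}^2$, each of which lies in $L^1(0,t)$ by Lemma~\ref{5lem} and \eqref{5pro-1}. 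Gronwall's lemma then delivers the pointwise bounds on $\|\nabla n^\epsilon(t)\|_{L^2}^2$ and $\|\Delta c^\epsilon(t)\|_{L^2}^2$ together with the time-integrated dissipations; combining these with the lower-order bounds \eqref{5lem-3} and \eqref{5lem-4} yields \eqref{5pro-2} and \eqref{5pro-3}.
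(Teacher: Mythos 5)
Your proposal is correct, and your Step 1 (the $u^{\epsilon}$-estimate) coincides with the paper's argument essentially verbatim: test with $-\Delta u^{\epsilon}$, apply Gagliardo--Nirenberg and Young, and run Gronwall with the coefficient $\|u^{\epsilon}\|_{L^2}^2\|\nabla u^{\epsilon}\|_{L^2}^2\in L^1(0,t)$ from \eqref{5lem-2} and the source $\|n^{\epsilon}\|_{L^2}^2\in L^1(0,t)$ from \eqref{5lem-4}. Where you genuinely diverge from the paper is in the treatment of \eqref{5pro-2} and \eqref{5pro-3}. You view the $c^{\epsilon}$- and $n^{\epsilon}$-estimates as coupled at top order: by keeping the consumption term in the form $\Delta(c^{\epsilon}(n^{\epsilon}*\rho^{\epsilon}))=\cdots+c^{\epsilon}\Delta(n^{\epsilon}*\rho^{\epsilon})$ you force the $c^{\epsilon}$-estimate to borrow $\delta\|\Delta n^{\epsilon}\|_{L^2}^2$, while $n^{\epsilon}\Delta(c^{\epsilon}*\rho^{\epsilon})$ forces the $n^{\epsilon}$-estimate to borrow $\delta\|\nabla\Delta c^{\epsilon}\|_{L^2}^2$; summing the two inequalities and absorbing for small $\delta$ then closes the argument, since the remaining Gronwall coefficients and sources (such as $\|u^{\epsilon}\|_{L^2}^2\|\nabla u^{\epsilon}\|_{L^2}^2$, $\|\Delta u^{\epsilon}\|_{L^2}^2$, $\|\nabla c^{\epsilon}\|_{L^2}^2\|\Delta c^{\epsilon}\|_{L^2}^2$, $\|n^{\epsilon}\|_{L^2}^2\|\nabla n^{\epsilon}\|_{L^2}^2$ and $\|c_0\|_{L^{\infty}}^2\|\nabla n^{\epsilon}\|_{L^2}^2$) all lie in $L^1(0,t)$ by Lemma~\ref{5lem} and \eqref{5pro-1}. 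The paper instead decouples the two estimates and proves them sequentially: testing the $c^{\epsilon}$-equation with $-\Delta^2 c^{\epsilon}$ and integrating by parts once, the consumption term enters only as $(\nabla(c^{\epsilon}(n^{\epsilon}*\rho^{\epsilon})),\nabla\Delta c^{\epsilon})_{L^2}$, so only \emph{first} derivatives of $n^{\epsilon}$ appear; the resulting piece $\|c^{\epsilon}\|_{L^{\infty}}^2\|\nabla n^{\epsilon}\|_{L^2}^2$ is an $L^1$-in-time source by \eqref{5lem-1} and \eqref{5lem-4}, so \eqref{5pro-2} closes on its own without any input from the $n^{\epsilon}$-dissipation. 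Only afterwards is \eqref{5pro-3} proved, where the term $C\|\Delta c^{\epsilon}\|_{L^2}^2\|\nabla\Delta c^{\epsilon}\|_{L^2}^2$ is integrable precisely because \eqref{5pro-2} is already in hand. In short, the paper's route shows that the apparent two-way top-order coupling is one-directional after a judicious integration by parts, which spares the absorption bookkeeping; your coupled-absorption scheme is somewhat more robust in that it does not rely on spotting this structure, and both deliver the stated bounds.
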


\begin{proof}[\emph{\textbf{Proof}}]
Multiplying the third equation of \eqref{Mod-1} by $-\Delta u^{\epsilon}$ and applying the interpolation inequality, we obtain
\begin{equation}\label{5pro-4}
\begin{split}
\frac{1}{2}\frac{\mathrm{d}}{\mathrm{d}t}\|\nabla u^{\epsilon}\|_{L^2}^2+\|\Delta u^{\epsilon}\|_{L^2}^2 &= ((u^{\epsilon}\cdot \nabla) u^{\epsilon},\Delta u^{\epsilon})_{L^2} - ((n^{\epsilon}\nabla \phi)*\rho^\epsilon,\Delta u^{\epsilon})_{L^2}\\
&\leq \frac{1}{2}\|\Delta u^{\epsilon}\|_{L^2}^2 + \|u^{\epsilon}\|_{L^4}^2\|\nabla u^{\epsilon}\|_{L^4}^2 + \|\nabla \phi\|_{L^{\infty}}^2\|n^{\epsilon}\|_{L^2}^2\\
&\leq \frac{3}{4}\|\Delta u^{\epsilon}\|_{L^2}^2 + \|\nabla \phi\|_{L^{\infty}}^2\|n^{\epsilon}\|_{L^2}^2 + C\|u^{\epsilon}\|_{L^2}^2\|\nabla u^{\epsilon}\|_{L^2}^4.
\end{split}
\end{equation}
Since by \eqref{5lem-2}, we have $\int_0^t\|u^{\epsilon}(r)\|_{L^2}^2\|\nabla u^{\epsilon}(r)\|_{L^2}^2\mathrm{d}r<\infty$, it follows from \eqref{5lem-2}, \eqref{5lem-4}, \eqref{5pro-4} and Gronwall's lemma that \eqref{5pro-1} holds.

To derive \eqref{5pro-2}, we multiply the second equation of \eqref{Mod-1} by $-\Delta^2 c^{\epsilon}$ and use the interpolation inequality to get
\begin{equation*}
\begin{split}
\frac{1}{2}\frac{\mathrm{d}}{\mathrm{d}t}\|\Delta c^{\epsilon}\|_{L^2}^2 + \|\nabla\Delta c^{\epsilon}\|_{L^2}^2 &= (\nabla(u^{\epsilon}\cdot\nabla c^{\epsilon}),\nabla\Delta c^{\epsilon})_{L^2} + (\nabla(c^{\epsilon}n^{\epsilon}*\rho^{\epsilon}),\nabla\Delta c^{\epsilon})_{L^2}\\
&\leq \frac{1}{2}\|\nabla\Delta c^{\epsilon}\|_{L^2}^2 + \|\nabla u^{\epsilon}\|_{L^4}^2\|\nabla c^{\epsilon}\|_{L^4}^2 + \|u^{\epsilon}\|_{L^{\infty}}^2\|D^2 c^{\epsilon}\|_{L^2}^2\\
&\quad + \|\nabla c^{\epsilon}\|_{L^4}^2\|n^{\epsilon}\|_{L^4}^2 + \|c^{\epsilon}\|_{L^{\infty}}^2\|\nabla n^{\epsilon}\|_{L^2}^2\\
&\leq \frac{1}{2}\|\nabla\Delta c^{\epsilon}\|_{L^2}^2 + C(\|n^{\epsilon}\|_{L^2}^2 + \|c^{\epsilon}\|_{L^{\infty}}^2)\|\nabla n^{\epsilon}\|_{L^2}^2\\
&\quad + C\|\nabla u^{\epsilon}\|_{L^2}^2\|\Delta u^{\epsilon}\|_{L^2}^2 + C(\|\nabla c^{\epsilon}\|_{L^2}^2 + \|u^{\epsilon}\|_{H^2}^2)\|\Delta c^{\epsilon}\|_{L^2}^2.
\end{split}
\end{equation*}
Combining the above with \eqref{5lem-3}, \eqref{5lem-4}, \eqref{5pro-1}, and applying Gronwall's lemma yields \eqref{5pro-2}.

Lastly, we establish \eqref{5pro-3}. Multiplying the first equation of \eqref{Mod-1} by $-\Delta n^{\epsilon}$ and integrating over space, we deduce via interpolation that
\begin{equation*}
\begin{split}
\frac{1}{2}\frac{\mathrm{d}}{\mathrm{d}t}\|\nabla n^{\epsilon}\|_{L^2}^2 + \|\Delta n^{\epsilon}\|_{L^2}^2 &= (u^{\epsilon}\cdot\nabla n^{\epsilon},\Delta n^{\epsilon})_{L^2} + (\nabla\cdot(n^{\epsilon}\nabla (c^{\epsilon}*\rho^{\epsilon})),\Delta n^{\epsilon})_{L^2}\\
&\leq \frac{1}{2}\|\Delta n^{\epsilon}\|_{L^2}^2 + \|u^{\epsilon}\cdot\nabla n^{\epsilon}\|_{L^2}^2 + \|\nabla\cdot(n^{\epsilon}\nabla (c^{\epsilon}*\rho^{\epsilon}))\|_{L^2}^2\\
&\leq \frac{1}{2}\|\Delta n^{\epsilon}\|_{L^2}^2 + (\|u^{\epsilon}\|_{L^{\infty}}^2 + \|\nabla c^{\epsilon}\|_{L^{\infty}}^2)\|\nabla n^{\epsilon}\|_{L^2}^2 + \|n^{\epsilon}\|_{L^4}^2\|\Delta c^{\epsilon}\|_{L^4}^2\\
&\leq \frac{1}{2}\|\Delta n^{\epsilon}\|_{L^2}^2 + C\|\Delta c^{\epsilon}\|_{L^2}^2\|\nabla\Delta c^{\epsilon}\|_{L^2}^2\\
&\quad + C(\|u^{\epsilon}\|_{H^2}^2 + \|c^{\epsilon}\|_{H^3}^2 + \|n^{\epsilon}\|_{L^2}^2)\|\nabla n^{\epsilon}\|_{L^2}^2.
\end{split}
\end{equation*}
Using \eqref{5lem-4}, \eqref{5pro-1}, \eqref{5pro-2}, and applying Gronwall's lemma gives the desired bound \eqref{5pro-3}. This completes the proof.
\end{proof}

\begin{corollary} \label{cor1}
Let $(n^\epsilon,c^\epsilon,u^\epsilon)$ be the global solution of \eqref{Mod-1} constructed in Lemma \ref{lem2} with $\epsilon \in (0,1)$. Then we have
\begin{align}
&\int_0^t\left\|\frac{\partial n^{\epsilon}}{\partial r}\right\|_{L^{2}}^2\mathrm{d}r \lesssim_{n_0,c_0,u_0,\phi,t} 1,\label{cor1-1}\\
&\int_0^t\left\|\frac{\partial c^{\epsilon}}{\partial r}\right\|_{H^{1}}^2\mathrm{d}r \lesssim_{n_0,c_0,u_0,\phi,t} 1,\label{cor1-2}\\
&\int_0^t\left\|\frac{\partial u^{\epsilon}}{\partial r}\right\|_{L^{2}}^2\mathrm{d}r \lesssim_{n_0,c_0,u_0,\phi,t} 1.\label{cor1-3}
\end{align}
\end{corollary}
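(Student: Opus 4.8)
The plan is to read each time derivative directly off the evolution equations in \eqref{Mod-1}, move every spatial term to the right-hand side, and estimate the resulting expression in the prescribed norm by invoking the uniform bounds already secured in Lemmas~\ref{5lem} and~\ref{5pro}. The guiding principle throughout is to arrange each product so that the single factor carrying the top-order spatial derivative is precisely one of the quantities controlled in $L^2(0,t)$ by the parabolic smoothing (namely $\|n^\epsilon\|_{H^2}$, $\|c^\epsilon\|_{H^3}$, $\|u^\epsilon\|_{H^2}$), while the remaining factors are bounded uniformly in time by the $\sup$-type estimates. Two elementary facts are used repeatedly: the Sobolev embedding $H^2(\mathbb{R}^2)\hookrightarrow L^{\infty}(\mathbb{R}^2)$, and the fact that convolution with the mollifier $\rho^{\epsilon}$ does not increase $L^p$- or $H^s$-norms, so that $c^{\epsilon}*\rho^{\epsilon}$ and $n^{\epsilon}*\rho^{\epsilon}$ inherit every bound on $c^{\epsilon}$ and $n^{\epsilon}$.

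For \eqref{cor1-3} I would start with the velocity equation, which is the simplest. Writing $\partial_r u^{\epsilon}=-Au^{\epsilon}-\mathbf{P}[(u^{\epsilon}\cdot\nabla)u^{\epsilon}]+\mathbf{P}[(n^{\epsilon}\nabla\phi)*\rho^{\epsilon}]$ and using that $\mathbf{P}$ is an $L^2$-bounded projection, the triangle inequality gives $\|\partial_r u^{\epsilon}\|_{L^2}\lesssim\|\Delta u^{\epsilon}\|_{L^2}+\|u^{\epsilon}\|_{L^{\infty}}\|\nabla u^{\epsilon}\|_{L^2}+\|\nabla\phi\|_{L^{\infty}}\|n^{\epsilon}\|_{L^2}$. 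Squaring and integrating over $[0,t]$, the first term is controlled by $\int_0^t\|u^{\epsilon}\|_{H^2}^2\,\mathrm{d}r$ via \eqref{5pro-1}; the second, after $\|u^{\epsilon}\|_{L^{\infty}}\lesssim\|u^{\epsilon}\|_{H^2}$, by $\sup_{[0,t]}\|\nabla u^{\epsilon}\|_{L^2}^2\int_0^t\|u^{\epsilon}\|_{H^2}^2\,\mathrm{d}r$, again bounded by \eqref{5pro-1}; and the third by $\|\nabla\phi\|_{L^{\infty}}^2\int_0^t\|n^{\epsilon}\|_{L^2}^2\,\mathrm{d}r$, finite by \eqref{5lem-4}.

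The same recipe handles \eqref{cor1-1} and \eqref{cor1-2}, the only new feature being the quadratic coupling. For $n^{\epsilon}$ I would use $\partial_r n^{\epsilon}=\Delta n^{\epsilon}-u^{\epsilon}\cdot\nabla n^{\epsilon}-\nabla n^{\epsilon}\cdot\nabla(c^{\epsilon}*\rho^{\epsilon})-n^{\epsilon}\Delta(c^{\epsilon}*\rho^{\epsilon})$, bounding the drift by $\|u^{\epsilon}\|_{L^{\infty}}\|\nabla n^{\epsilon}\|_{L^2}$ and the two chemotactic pieces by $\|\nabla n^{\epsilon}\|_{L^4}\|\nabla c^{\epsilon}\|_{L^4}$ and $\|n^{\epsilon}\|_{L^4}\|\Delta c^{\epsilon}\|_{L^4}$; a Gagliardo--Nirenberg interpolation then throws the top derivative onto $\|n^{\epsilon}\|_{H^2}$ and $\|c^{\epsilon}\|_{H^3}$, whose time-integrals are finite by \eqref{5pro-3} and \eqref{5pro-2}, while the lower-order factors are uniformly bounded by \eqref{5pro-2} and \eqref{5pro-3}. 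For the $H^1$-estimate \eqref{cor1-2} I would apply the analogous decomposition to $\partial_r c^{\epsilon}=\Delta c^{\epsilon}-u^{\epsilon}\cdot\nabla c^{\epsilon}-c^{\epsilon}(n^{\epsilon}*\rho^{\epsilon})$, but measure everything in $H^1$, using the Moser-type product estimate already invoked in the proof of Lemma~\ref{lem2} to split $\|u^{\epsilon}\cdot\nabla c^{\epsilon}\|_{H^1}$ and $\|c^{\epsilon}(n^{\epsilon}*\rho^{\epsilon})\|_{H^1}$ into an $L^{\infty}$-factor times an $H^1$-factor; here the leading contribution $\int_0^t\|\Delta c^{\epsilon}\|_{H^1}^2\,\mathrm{d}r\lesssim\int_0^t\|c^{\epsilon}\|_{H^3}^2\,\mathrm{d}r$ is again finite by \eqref{5pro-2}.

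I expect no genuine obstacle here: the corollary is a direct consequence of the a priori bounds and the parabolic form of the equations. The only point requiring care is the bookkeeping---ensuring that in every quadratic term the single factor bearing two (for $n^{\epsilon}$ and $u^{\epsilon}$) or three (for $c^{\epsilon}$) spatial derivatives is exactly the one integrated in time, so that the remaining factors can be pulled out as time-uniform suprema. Mismatching this allocation would leave a term such as $\int_0^t\|n^{\epsilon}\|_{H^2}^2\|c^{\epsilon}\|_{H^3}^2\,\mathrm{d}r$, which the available estimates do not close.
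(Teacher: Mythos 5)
Your proposal is correct and follows essentially the same route as the paper: read $\partial_t n^{\epsilon}$, $\partial_t c^{\epsilon}$, $\partial_t u^{\epsilon}$ off the equations in \eqref{Mod-1}, bound each term via Sobolev embedding and the non-expansiveness of mollification, and allocate the top-order factor ($\|n^{\epsilon}\|_{H^2}$, $\|c^{\epsilon}\|_{H^3}$, $\|u^{\epsilon}\|_{H^2}$) to the time integral controlled by Lemma~\ref{5pro} while the remaining factors are taken in supremum. The only cosmetic difference is that for the chemotactic terms you use an $L^4$--$L^4$ Gagliardo--Nirenberg splitting where the paper uses $L^{\infty}$--$L^2$, and your closing remark about the uncloseable term $\int_0^t\|n^{\epsilon}\|_{H^2}^2\|c^{\epsilon}\|_{H^3}^2\,\mathrm{d}r$ correctly identifies the one bookkeeping pitfall.
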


\begin{proof}[\emph{\textbf{Proof}}]
For \eqref{cor1-1}, we derive from \eqref{5pro-1}-\eqref{5pro-3} that
\begin{equation*}
\begin{split}
\int_0^t\left\|\frac{\partial n^{\epsilon}}{\partial r}\right\|_{L^{2}}^2\mathrm{d}r
&\lesssim \int_0^t\Big( \|u^\epsilon(r)\|_{L^{\infty}}^2\|\nabla n^{\epsilon}(r)\|_{L^2}^2 + \|\Delta n^\epsilon(r)\|_{L^2}^2 + \|\nabla c^\epsilon(r)\|_{L^{\infty}}^2\|\nabla n^{\epsilon}(r)\|_{L^2}^2 \\
&\quad + \|n^\epsilon(r)\|_{L^{\infty}}^2\|\Delta c^{\epsilon}(r)\|_{L^2}^2 \Big) \mathrm{d}r \\
&\lesssim \int_0^t\Big( \|u^\epsilon(r)\|_{H^{2}}^2\|n^{\epsilon}(r)\|_{H^1}^2 + \|n^\epsilon(r)\|_{H^2}^2 + \|c^\epsilon(r)\|_{H^3}^2\|n^{\epsilon}(r)\|_{H^1}^2 \\
&\quad + \|n^\epsilon(r)\|_{H^2}^2\|c^{\epsilon}(r)\|_{H^2}^2 \Big) \mathrm{d}r \\
&\lesssim_{n_0,c_0,u_0,\phi,t} 1.
\end{split}
\end{equation*}

For \eqref{cor1-2}, it follows from \eqref{5pro-1}-\eqref{5pro-3} that
\begin{equation*}
\begin{split}
\int_0^t\left\|\frac{\partial c^{\epsilon}}{\partial r}\right\|_{H^{1}}^2\mathrm{d}r
&= \int_0^t\left\|\frac{\partial c^{\epsilon}}{\partial r}\right\|_{L^{2}}^2\mathrm{d}r + \int_0^t\left\|\nabla\frac{\partial c^{\epsilon}}{\partial r}\right\|_{L^{2}}^2\mathrm{d}r \\
&\lesssim \int_0^t\Big( \|u^\epsilon(r)\|_{H^2}^2\|c^{\epsilon}(r)\|_{H^1}^2 + \|c^\epsilon(r)\|_{H^2}^2 + \|c^\epsilon(r)\|_{L^{\infty}}^2\|n^{\epsilon}(r)\|_{L^2}^2 \Big) \mathrm{d}r \\
&\quad + \int_0^t\Big( \|u^\epsilon(r)\|_{H^1}^2\|c^{\epsilon}(r)\|_{H^3}^2 + \|u^\epsilon(r)\|_{H^2}^2\|c^{\epsilon}(r)\|_{H^2}^2 + \|c^\epsilon(r)\|_{H^3}^2 \\
&\quad + \|c^\epsilon(r)\|_{H^3}^2\|n^{\epsilon}(r)\|_{L^2}^2 + \|c^{\epsilon}(r)\|_{L^{\infty}}^2\|n^{\epsilon}(r)\|_{H^1}^2 \Big) \mathrm{d}r \\
&\lesssim_{n_0,c_0,u_0,\phi,t} 1.
\end{split}
\end{equation*}
Similarly, it follows that
\begin{equation*}
\begin{split}
\int_0^t\left\|\frac{\partial u^{\epsilon}}{\partial r}\right\|_{L^{2}}^2\mathrm{d}r
&\lesssim \int_0^t\Big( \|u^\epsilon(r)\|_{H^2}^2\|u^{\epsilon}(r)\|_{H^1}^2 + \|u^\epsilon(r)\|_{H^2}^2 + \|\nabla\phi\|_{L^{\infty}}^2\|n^{\epsilon}(r)\|_{L^2}^2 \Big) \mathrm{d}r \\
&\lesssim_{n_0,c_0,u_0,\phi,t} 1,
\end{split}
\end{equation*}
which gives \eqref{cor1-3}. The proof of Corollary \ref{cor1} is thus completed.
\end{proof}

\subsection{Existence of global strong solutions}\label{sec3-1}
We begin by introducing the following phase spaces:
\begin{equation}\label{work}
\begin{split}
\mathcal{Z}_n &:= \mathcal{C}([0,T];U') \cap L^2_w(0,T;H^2(\mathbb{R}^2)) \cap L^2(0,T;H^1_{\mathrm{loc}}(\mathbb{R}^2)) \cap \mathcal{C}([0,T];H^1_w(\mathbb{R}^2)), \\
\mathcal{Z}_c &:= \mathcal{C}([0,T];U') \cap L^2_w(0,T;H^3(\mathbb{R}^2)) \cap L^2(0,T;H^2_{\mathrm{loc}}(\mathbb{R}^2)) \cap \mathcal{C}([0,T];H^2_w(\mathbb{R}^2)), \\
\mathcal{Z}_u &:= \mathcal{C}([0,T];U'_1) \cap L^2_w(0,T;\mathbb{H}^2) \cap L^2(0,T;\mathbb{H}^1_{\mathrm{loc}}) \cap \mathcal{C}([0,T];\mathbb{H}^1_w),
\end{split}
\end{equation}
where the Hilbert spaces $U$ and $U_1$ satisfy the property that, for any fixed $s > 0$, the embeddings $U \subset H^s(\mathbb{R}^3)$ and $U_1 \subset \mathbb{H}^s$ are compact (cf.~\cite{11brzezniak2013existence}). The symbols $U'$ and $U_1'$ denote their respective dual spaces. Based on the uniform a priori estimates provided by Lemma~\ref{5pro} and Corollary~\ref{cor1}, together with standard compactness arguments, we derive the following convergence result:
\begin{equation}\label{5-1}
\begin{split}
n^{\epsilon} \to n \ \text{in}\ \mathcal{Z}_n,\quad c^{\epsilon} \to c \ \text{in}\ \mathcal{Z}_c,\quad u^{\epsilon} \to u \ \text{in}\ \mathcal{Z}_u,\quad \text{as}~\epsilon \to 0.
\end{split}
\end{equation}
In fact, the convergence \eqref{5-1} is sufficient to guarantee that the limit $(n,c,u)$ constitutes a global strong solution to system~\eqref{CNS0} in the sense of Theorem~\ref{th1}. As a representative case, we shall verify the convergence of the cross-diffusion term in the $n$-equation, as the convergence of the remaining terms, including those in the $c$- and $u$-equations, can be handled analogously and is therefore omitted.
Specifically, we aim to show that for all $t \in [0,T]$,
\begin{equation}\label{ddd}
\begin{split}
\lim_{\epsilon\to 0} \int_0^t \big(\nabla \cdot (n^{\epsilon}(r)\nabla (c^{\epsilon}(r)*\rho^{\epsilon})), \varphi\big)_{L^2}\,\mathrm{d}r = \int_0^t \big(\nabla \cdot (n(r)\nabla c(r)), \varphi\big)_{L^2}\,\mathrm{d}r,\quad \forall \varphi \in L^2(\mathbb{R}^2).
\end{split}
\end{equation}
To this end, let $\varphi \in L^2(\mathbb{R}^2)$ and $h>0$ be arbitrary. Then there exists a test function $\varphi_h \in \mathcal{C}_0^{\infty}(\mathbb{R}^2)$ such that $\|\varphi - \varphi_h\|_{L^2} \leq h$. Moreover, there exists $d > 0$ such that $\mathrm{supp}\,\varphi_h \subset \mathcal{O}_d$, where $\{\mathcal{O}_d\}_{d \in \mathbb{N}}$ denotes the sequence of bounded open subsets defined in Subsection~\ref{nnnn}. Using the triangle inequality, we estimate
\begin{equation}\label{d.1}
\begin{split}
&\left| \left( \nabla \cdot (n^{\epsilon} \nabla (c^{\epsilon} * \rho^{\epsilon})) - \nabla \cdot (n \nabla c), \varphi \right)_{L^2} \right| \\
&\leq \left( \| \nabla \cdot (n^{\epsilon} \nabla (c^{\epsilon} * \rho^{\epsilon})) \|_{L^2} + \| \nabla \cdot (n \nabla c) \|_{L^2} \right) \| \varphi - \varphi_h \|_{L^2} \\
&\quad + \left| \big( \nabla \cdot ((n^{\epsilon} - n) \nabla (c^{\epsilon} * \rho^{\epsilon})), \varphi_h \big)_{L^2} \right| + \left| \big( \nabla \cdot (n \nabla (c^{\epsilon} * \rho^{\epsilon} - c)), \varphi_h \big)_{L^2} \right|.
\end{split}
\end{equation}
By Sobolev embeddings $H^2(\mathbb{R}^2) \hookrightarrow L^\infty(\mathbb{R}^2)$ and $H^1(\mathbb{R}^2) \hookrightarrow L^p(\mathbb{R}^2)$ for $p \in [2, \infty)$, we can estimate the terms on the right-hand side as
\begin{equation*}
\begin{split}
&\left( \| \nabla \cdot (n^{\epsilon} \nabla (c^{\epsilon} * \rho^{\epsilon})) \|_{L^2} + \| \nabla \cdot (n \nabla c) \|_{L^2} \right) \| \varphi - \varphi_h \|_{L^2} \\
&\lesssim \left( \|n^{\epsilon}\|_{H^1} \|c^{\epsilon}\|_{H^3} + \|n\|_{H^1} \|c\|_{H^3} \right) h.
\end{split}
\end{equation*}
For the other two terms, we have
\begin{equation*}
\left| \big( \nabla \cdot ((n^{\epsilon} - n) \nabla (c^{\epsilon} * \rho^{\epsilon})), \varphi_h \big)_{L^2} \right| \lesssim \|n^{\epsilon} - n\|_{H^1(\mathcal{O}_d)} \|c^{\epsilon}\|_{H^3(\mathcal{O}_d)} \|\varphi_h\|_{L^2(\mathcal{O}_d)},
\end{equation*}
and
\begin{equation*}
\begin{split}
\left| \big( \nabla \cdot (n \nabla (c^{\epsilon} * \rho^{\epsilon} - c)), \varphi_h \big)_{L^2} \right|
&\lesssim \|n\|_{H^2(\mathcal{O}_d)} \|c^{\epsilon} * \rho^{\epsilon} - c\|_{H^2(\mathcal{O}_d)} \|\varphi_h\|_{L^2(\mathcal{O}_d)}.
\end{split}
\end{equation*}
Substituting these estimates into \eqref{d.1}, we deduce that for all $t \in [0,T]$,
\begin{equation}\label{d.2}
\begin{split}
&\left| \int_0^t \big( \nabla \cdot (n^{\epsilon}(r) \nabla (c^{\epsilon}(r) * \rho^{\epsilon})), \varphi \big)_{L^2} \mathrm{d}r - \int_0^t \big( \nabla \cdot (n(r) \nabla c(r)), \varphi \big)_{L^2} \mathrm{d}r \right| \\
&\lesssim h \int_0^T \big( \|n^{\epsilon}(t)\|_{H^1}^2 + \|c^{\epsilon}(t)\|_{H^3}^2 + \|n(t)\|_{H^1}^2 + \|c(t)\|_{H^3}^2 \big)\mathrm{d}t \\
&\quad + \|\varphi_h\|_{L^2(\mathcal{O}_d)} \|c^{\epsilon}\|_{L^2(0,T;H^3(\mathcal{O}_d))} \|n^{\epsilon} - n\|_{L^2(0,T;H^1(\mathcal{O}_d))} \\
&\quad + \|\varphi_h\|_{L^2(\mathcal{O}_d)} \|n\|_{L^2(0,T;H^2(\mathcal{O}_d))} \|c^{\epsilon} * \rho^{\epsilon} - c\|_{L^2(0,T;H^2(\mathcal{O}_d))}.
\end{split}
\end{equation}
By taking the limit superior as $\epsilon \to 0$ and invoking the convergence result \eqref{5-1}, we obtain
\begin{equation*}
\limsup_{\epsilon \to 0} \left| \int_0^t \big( \nabla \cdot (n^{\epsilon} \nabla (c^{\epsilon} * \rho^{\epsilon})) - \nabla \cdot (n \nabla c), \varphi \big)_{L^2} \mathrm{d}r \right| \leq C h,
\end{equation*}
which, together with the arbitrariness of $h > 0$, yields the desired convergence \eqref{ddd}.

As a result, we obtain the integral identities \eqref{iden}. Furthermore, it follows from Corollary~\ref{cor1} that
\begin{equation*}
\frac{\partial n}{\partial t} \in L^2(0,T;L^2(\mathbb{R}^2)),\quad \frac{\partial c}{\partial t} \in L^2(0,T;H^1(\mathbb{R}^2)),\quad \frac{\partial u}{\partial t} \in L^2(0,T;\mathbb{H}^0).
\end{equation*}
Thus, by the Lions-Magenes lemma \cite{ad-2lions1963problemes}, together with \eqref{work} and \eqref{5-1}, we conclude that
\begin{equation}\label{reg}
\begin{split}
&n \in \mathcal{C}([0,T];H^1(\mathbb{R}^2)) \cap L^2(0,T;H^2(\mathbb{R}^2)),\\
&c \in \mathcal{C}([0,T];H^2(\mathbb{R}^2)) \cap L^2(0,T;H^3(\mathbb{R}^2)),\\
&u \in \mathcal{C}([0,T]; \mathbb{H}^1) \cap L^2(0,T; \mathbb{H}^2).
\end{split}
\end{equation}
This completes the proof of the existence of a global strong solution. \qed

\subsection{Uniqueness of the global strong solution}\label{sec3-2}
Let $(n_1,c_1,u_1)$ and $(n_2,c_2,u_2)$ be two global strong solutions to the system \eqref{CNS0} with the same initial data $(n_0,c_0,u_0)$. For simplicity, we set
\begin{equation}\label{fin1}
\begin{split}
n^* = n_1 - n_2, \quad c^* = c_1 - c_2, \quad u^* = u_1 - u_2, \quad P^* = P_1 - P_2.
\end{split}
\end{equation}
Then for $t \in [0,T]$, the triple $(n^*,c^*,u^*)$ satisfies the following system:
\begin{equation}\label{fin2}
\left\{
\begin{aligned}
&n^*_t + u_1 \cdot \nabla n_1 - u_2 \cdot \nabla n_2 = \Delta n^* - \nabla \cdot (n_1 \nabla c_1) + \nabla \cdot (n_2 \nabla c_2), \\
&c^*_t + u_1 \cdot \nabla c_1 - u_2 \cdot \nabla c_2 = \Delta c^* - n_1 c_1 + n_2 c_2, \\
&u^*_t + \textbf{P}[(u_1 \cdot \nabla) u_1 - (u_2 \cdot \nabla) u_2] = -A u^* + \textbf{P}(n^* \nabla \phi), \\
&\nabla \cdot u^* = 0, \\
&(n_0^*, c_0^*, u_0^*) = 0.
\end{aligned}
\right.
\end{equation}
Since $n^* \in L^2(0,T; H^2(\mathbb{R}^2))$ and $\partial_t n^* \in L^2(0,T; L^2(\mathbb{R}^2))$, it follows that $\frac{1}{2} \frac{\mathrm{d}}{\mathrm{d}t} \|n^*\|_{L^2}^2 = (\partial_t n^*, n^*)_{L^2}$. From the first equation of \eqref{fin2}, we obtain
\begin{equation}\label{un-1}
\begin{split}
\frac{1}{2} \frac{\mathrm{d}}{\mathrm{d}t} \|n^*\|_{L^2}^2 + \|\nabla n^*\|_{L^2}^2
&= (u^* n_1, \nabla n^*)_{L^2} + (n^* \nabla c_1, \nabla n^*)_{L^2} + (n_2 \nabla c^*, \nabla n^*)_{L^2} \\
&\leq \frac{1}{2} \|\nabla n^*\|_{L^2}^2 + C \|n_1\|_{H^1}^2 \|u^*\|_{H^1}^2 + C \|c_1\|_{H^3}^2 \|n^*\|_{L^2}^2 \\
&\quad + C \|n_2\|_{H^1}^2 \|c^*\|_{H^2}^2.
\end{split}
\end{equation}
In a similar manner, since $\frac{1}{2} \frac{\mathrm{d}}{\mathrm{d}t} \|\nabla n^*\|_{L^2}^2 = \langle \nabla \partial_t n^*, \nabla n^* \rangle_{H^{-1}, H^1}$, we deduce that
\begin{equation}\label{un-2}
\begin{split}
\frac{1}{2} \frac{\mathrm{d}}{\mathrm{d}t} \|\nabla n^*\|_{L^2}^2 + \|\Delta n^*\|_{L^2}^2
&\leq \frac{1}{2} \|\Delta n^*\|_{L^2}^2 + C \|\nabla n_1\|_{L^4}^2 \|u^*\|_{L^4}^2 + C \|u_2\|_{L^{\infty}}^2 \|\nabla n^*\|_{L^2}^2 \\
&\quad + C \|\nabla c_1\|_{L^{\infty}}^2 \|\nabla n^*\|_{L^2}^2 + C \|\Delta c_1\|_{L^4}^2 \|n^*\|_{L^4}^2 \\
&\quad + C \|\nabla n_2\|_{L^4}^2 \|\nabla c^*\|_{L^4}^2 + C \|n_2\|_{L^{\infty}}^2 \|\Delta c^*\|_{L^2}^2 \\
&\leq \frac{1}{2} \|\Delta n^*\|_{L^2}^2 + C \|n_1\|_{H^2}^2 \|u^*\|_{H^1}^2 + C (\|u_2\|_{H^2}^2 + \|c_1\|_{H^3}^2) \|n^*\|_{H^1}^2 \\
&\quad + C \|n_2\|_{H^2}^2 \|\Delta c^*\|_{L^2}^2.
\end{split}
\end{equation}
For the second equation of \eqref{fin2}, noting that $c^* \in L^2(0,T; H^3(\mathbb{R}^2))$ and $\partial_t c^* \in L^2(0,T; H^1(\mathbb{R}^2))$, it follows that $\frac{1}{2} \frac{\mathrm{d}}{\mathrm{d}t} \|c^*\|_{L^2}^2 = (\partial_t c^*, c^*)_{L^2}$ and $\frac{1}{2} \frac{\mathrm{d}}{\mathrm{d}t} \|\Delta c^*\|_{L^2}^2 = \langle \Delta \partial_t c^*, \Delta c^* \rangle_{H^{-1}, H^1}$. By using the Sobolev embedding, we obtain
\begin{equation}\label{un-3}
\begin{split}
\frac{1}{2} \frac{\mathrm{d}}{\mathrm{d}t} \|c^*\|_{L^2}^2 + \|\nabla c^*\|_{L^2}^2
&= (u^* c_1, \nabla c^*)_{L^2} - (n^* c_1, c^*)_{L^2} - (n_2 c^*, c^*)_{L^2} \\
&\leq \frac{1}{2} \|\nabla c^*\|_{L^2}^2 + C \|c_1\|_{L^{\infty}}^2 \|u^*\|_{L^2}^2 + C \|n^*\|_{L^2}^2 \\
&\quad + C (\|c_1\|_{L^{\infty}}^2 + \|n_2\|_{H^2}^2) \|c^*\|_{L^2}^2.
\end{split}
\end{equation}
\begin{equation}\label{un-4}
\begin{split}
\frac{1}{2} \frac{\mathrm{d}}{\mathrm{d}t} \|\Delta c^*\|_{L^2}^2 + \|\nabla \Delta c^*\|_{L^2}^2
&\leq \frac{1}{2} \|\nabla \Delta c^*\|_{L^2}^2 + C \|c_1\|_{L^{\infty}}^2 \|\nabla n^*\|_{L^2}^2 + C \|c_1\|_{H^3}^2 \|u^*\|_{H^1}^2 \\
&\quad + C \|c_1\|_{H^3}^2 \|n^*\|_{L^2}^2 + C (\|u_2\|_{H^2}^2 + \|n_2\|_{H^1}^2) \|c^*\|_{H^2}^2.
\end{split}
\end{equation}
For the fluid equation, we get
\begin{equation}\label{un-5}
\begin{split}
\frac{1}{2} \frac{\mathrm{d}}{\mathrm{d}t} \|u^*(t)\|_{L^2}^2 + \|\nabla u^*\|_{L^2}^2
&= (u^* \otimes u_1, \nabla u^*)_{L^2} + (n^* \nabla \phi, u^*)_{L^2} \\
&\leq \frac{1}{2} \|\nabla u^*\|_{L^2}^2 + C (1 + \|u_1\|_{H^2}^2) \|u^*\|_{L^2}^2 + C \|\nabla \phi\|_{L^{\infty}}^2 \|n^*\|_{L^2}^2,
\end{split}
\end{equation}
and
\begin{equation}\label{un-6}
\begin{split}
\frac{1}{2} \frac{\mathrm{d}}{\mathrm{d}t} \|\nabla u^*(t)\|_{L^2}^2 + \|\Delta u^*\|_{L^2}^2\leq \frac{1}{2} \|\Delta u^*\|_{L^2}^2 + C \|u_1\|_{H^2}^2 \|u^*\|_{H^1}^2 + C \|\nabla \phi\|_{L^{\infty}}^2 \|n^*\|_{L^2}^2.
\end{split}
\end{equation}
Combining the estimates \eqref{un-1}-\eqref{un-6}, we derive that
\begin{equation}\label{un-7}
\begin{split}
\frac{\mathrm{d}}{\mathrm{d}t} \left( \|n^*(t)\|_{H^1}^2 + \|c^*(t)\|_{H^2}^2 + \|u^*(t)\|_{H^1}^2 \right)
\leq C \mathbf{A}(t) \left( \|n^*(t)\|_{H^1}^2 + \|c^*(t)\|_{H^2}^2 + \|u^*(t)\|_{H^1}^2 \right),
\end{split}
\end{equation}
where
\begin{equation*}
\mathbf{A}(t) := 1 + \|\nabla \phi\|_{L^{\infty}}^2 + \sum_{i=1}^{2} \left( \|n_i(t)\|_{H^2}^2 + \|c_i(t)\|_{H^3}^2 + \|u_i(t)\|_{H^2}^2 \right).
\end{equation*}
According to the time-space regularity provided by \eqref{reg}, we know that for every $t \in [0,T]$,
\begin{equation*}
\int_0^t \mathbf{A}(r)\,\mathrm{d}r < \infty.
\end{equation*}
Thus, applying the Gronwall lemma to \eqref{un-7}, we derive that
\begin{equation*}
\|n^*(t)\|_{H^1}^2 + \|c^*(t)\|_{H^2}^2 + \|u^*(t)\|_{H^1}^2 = 0,
\end{equation*}
which implies the uniqueness. The proof of Theorem \ref{th1} is thus completed. \qed

\section{Proof of Theorem \ref{th2}}\label{sec4}
This section is dedicated to providing the core proof process for Theorem \ref{th2}. Under the assumption that the initial data $(n_0, c_0, u_0)$ and the potential function $\phi$ for system \eqref{CNS0} possess higher regularity as specified in Theorem \ref{th2}, we can, building upon Lemma \ref{5pro}, further derive uniform higher-order estimates via an iterative procedure. The precise result is stated as follows.

\begin{proposition} \label{6pro}
Let the assumptions in Theorem~\ref{th2} hold, and let $(n^\epsilon, c^\epsilon, u^\epsilon)$ be the global solution to \eqref{Mod-1} with $\epsilon \in (0,1)$. Then for every integer $s \geq 2$, we have
\begin{equation} \begin{split} \label{6pro-1}
&\|n^{\epsilon}(t)\|_{H^s}^2 + \|c^{\epsilon}(t)\|_{H^{s+1}}^2 + \|u^{\epsilon}(t)\|_{H^s}^2 \\
&\quad + \int_0^t \left( \|n^{\epsilon}(r)\|_{H^{s+1}}^2 + \|c^{\epsilon}(r)\|_{H^{s+2}}^2 + \|u^{\epsilon}(r)\|_{H^{s+1}}^2 \right) \,\mathrm{d}r \lesssim_{n_0, c_0, u_0, \phi, t} 1.
\end{split} \end{equation}
\end{proposition}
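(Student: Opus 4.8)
The plan is to argue by induction on $s$, using the parabolic smoothing of each equation to gain one derivative and thereby absorb the dangerous top-order coupling terms. The base case $s=1$ is precisely the content of Lemma~\ref{5pro} together with the time-integrated bounds it supplies, so I would assume that \eqref{6pro-1} holds with $s$ replaced by $s-1$ (both the pointwise and the $L^1_t$ parts) and establish it at level $s$. To this end I would apply $\Lambda^s$ to the $n^\epsilon$- and $u^\epsilon$-equations and $\Lambda^{s+1}$ to the $c^\epsilon$-equation, pair each with $\Lambda^s n^\epsilon$, $\Lambda^s u^\epsilon$, $\Lambda^{s+1} c^\epsilon$ respectively, integrate by parts, and add the three resulting identities. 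The left-hand side then controls $\frac{1}{2}\frac{\mathrm{d}}{\mathrm{d}t}E_s^\epsilon(t) + D_s^\epsilon(t)$, where $E_s^\epsilon := \|n^\epsilon\|_{H^s}^2 + \|c^\epsilon\|_{H^{s+1}}^2 + \|u^\epsilon\|_{H^s}^2$ and $D_s^\epsilon := \|n^\epsilon\|_{H^{s+1}}^2 + \|c^\epsilon\|_{H^{s+2}}^2 + \|u^\epsilon\|_{H^{s+1}}^2$ are exactly the two quantities in \eqref{6pro-1}.

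The key structural point is the staggered regularity $(s,s+1,s)$ for $(n^\epsilon,c^\epsilon,u^\epsilon)$, matched against a dissipation that gains exactly one derivative in each slot. In the $n^\epsilon$-equation the cross-diffusion $\nabla\cdot(n^\epsilon\nabla(c^\epsilon*\rho^\epsilon))$, after integration by parts, produces at top order a factor $\|c^\epsilon\|_{H^{s+1}}$ (available pointwise, since $c^\epsilon$ is carried one level higher) multiplied against the $n^\epsilon$-dissipation $\|n^\epsilon\|_{H^{s+1}}$; Young's inequality sends the dissipative factor to the left and leaves $\|n^\epsilon\|_{L^\infty}^2\|c^\epsilon\|_{H^{s+1}}^2$, a Gronwall term. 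Symmetrically, the production term $c^\epsilon(n^\epsilon*\rho^\epsilon)$ in the $c^\epsilon$-equation produces at top order $\|n^\epsilon\|_{H^{s+1}}$ — one derivative above the pointwise $n^\epsilon$-energy $\|n^\epsilon\|_{H^s}$ — and this is precisely absorbed by the same $n^\epsilon$-dissipation once the three estimates are summed, leaving $\|c^\epsilon\|_{L^\infty}^2\|c^\epsilon\|_{H^{s+1}}^2$. This mutual absorption is the reason the estimates must be combined rather than run sequentially, and it is exactly what the staggered hierarchy is designed to achieve.

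For the remaining terms I would invoke standard tools: the Moser-type product estimate already recorded before \eqref{u2}, a Kato--Ponce commutator estimate for the transport terms $u^\epsilon\cdot\nabla n^\epsilon$, $u^\epsilon\cdot\nabla c^\epsilon$ and $\mathbf{P}[(u^\epsilon\cdot\nabla)u^\epsilon]$ — where $\nabla\cdot u^\epsilon=0$ kills the leading contribution $(u^\epsilon\cdot\nabla\Lambda^s n^\epsilon,\Lambda^s n^\epsilon)_{L^2}=0$ — and the two-dimensional Sobolev embeddings $H^2\hookrightarrow L^\infty$ and $H^1\hookrightarrow L^p$. The Stokes forcing is controlled using the hypothesis $\phi\in H^{s+1}$ of Theorem~\ref{th2} through $\|n^\epsilon\nabla\phi\|_{H^s}\lesssim\|n^\epsilon\|_{L^\infty}\|\phi\|_{H^{s+1}}+\|\nabla\phi\|_{L^\infty}\|n^\epsilon\|_{H^s}$. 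All resulting coefficients are either lower-order norms bounded pointwise by $E_{s-1}^\epsilon$, or norms such as $\|n^\epsilon\|_{L^\infty}^2$, $\|\nabla c^\epsilon\|_{L^\infty}^2$ and $\|u^\epsilon\|_{H^2}^2$ whose time integrals are finite by Lemma~\ref{5pro}, the induction hypothesis, and Gagliardo--Nirenberg interpolation; crucially, since the mollifier satisfies $\|f*\rho^\epsilon\|_{H^k}\leq\|f\|_{H^k}$, every bound is uniform in $\epsilon$. One then reaches $\frac{\mathrm{d}}{\mathrm{d}t}E_s^\epsilon+D_s^\epsilon\lesssim\mathbf{B}(t)\,E_s^\epsilon$ with $\int_0^t\mathbf{B}(r)\,\mathrm{d}r<\infty$, and Gronwall's lemma closes the induction.

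The main obstacle I anticipate is the closure of the combined top-order estimate, namely verifying that the production term in the $c^\epsilon$-equation costs no genuine derivative: the factor $\|n^\epsilon\|_{H^{s+1}}$ it generates lives one order above what $E_s^\epsilon$ keeps bounded pointwise, so it can be controlled only by spending the parabolic gain of the $n^\epsilon$-equation, which forces one to track the constants and choose the Young's-inequality weights so that the total coefficient of $\|n^\epsilon\|_{H^{s+1}}^2$ remains strictly positive after summation. A secondary difficulty is ensuring that each intermediate-order coefficient is integrable in time — this is where Lemma~\ref{5pro} (for $s=2$) and the time-integrated part of the induction hypothesis (for $s\geq 3$) are indispensable, together with interpolation to trade an uncontrolled $L^\infty$-norm for the product of a bounded low-order norm and an $L^2$-in-time high-order norm.
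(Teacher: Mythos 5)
Your proposal is correct, and its skeleton --- energy estimates at the staggered levels $(s,s+1,s)$, Moser-type product estimates combined with $H^2(\mathbb{R}^2)\hookrightarrow L^{\infty}(\mathbb{R}^2)$, and a Gronwall argument whose coefficients are made time-integrable by Lemma~\ref{5pro} --- is the same as the paper's. You diverge, however, on exactly the point you single out as the main obstacle, and the comparison is instructive. You let the production term $c^\epsilon(n^\epsilon*\rho^\epsilon)$ cost a factor $\|n^\epsilon\|_{H^{s+1}}$ and then recover it by borrowing the $n^\epsilon$-dissipation after summing the three estimates, which forces the careful tracking of Young weights you describe. The paper never generates that factor: in \eqref{4lem-4} the $H^{s+1}$-energy identity for $c^\epsilon$ is obtained by pairing $\Lambda^{s}$ of the nonlinearities against $\Lambda^{s+2}c^\epsilon$ (legitimate since $\Lambda$ is self-adjoint and $\|\Lambda^{s+2}f\|_{L^2}^2\asymp\|\Lambda^{s+1}f\|_{L^2}^2+\|\nabla\Lambda^{s+1}f\|_{L^2}^2$), so the product $c^\epsilon(n^\epsilon*\rho^\epsilon)$ is measured only in $H^{s}$, yielding $\|c^\epsilon\|_{H^2}\|n^\epsilon\|_{H^s}$, while the excess derivatives fall on $c^\epsilon$ and are absorbed by the $c^\epsilon$-dissipation via Young \emph{within the same equation}. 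Consequently each of the three estimates closes individually in the form (integrable coefficient) times $\mathcal{E}(t)$, the summation in \eqref{4lem-5} is mere bookkeeping, and no cross-equation absorption or delicate choice of weights is ever needed. A second, smaller difference: your induction on $s$ is sound but redundant --- the paper's coefficient $\mathbf{A}_1(t)$ involves only $\|n^\epsilon\|_{H^2}^2+\|c^\epsilon\|_{H^3}^2+\|u^\epsilon\|_{H^2}^2+\|\phi\|_{H^{s+1}}^2$, whose time integral is finite by \eqref{5pro-1}--\eqref{5pro-3} alone, so one application of Gronwall covers every $s\geq2$ at once. Your route buys robustness (cross-absorption works even in systems where derivatives cannot be shifted onto the better-dissipated variable); the paper's buys a shorter argument with no constant-tracking.
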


\begin{proof}[\textbf{Proof}]
Applying the Bessel potential operator $\Lambda^s$ to the third equation in \eqref{Mod-1} and taking the $L^2$-inner product with $\Lambda^s u^\epsilon$, we deduce via the Moser-type inequality and the Sobolev embedding $H^2(\mathbb{R}^2)\hookrightarrow L^\infty(\mathbb{R}^2)$ that
\begin{equation} \label{4lem-2}
\begin{split}
\frac{1}{2} \frac{\mathrm{d}}{\mathrm{d}t} \|\Lambda^s u^\epsilon\|_{L^2}^2 + \|\nabla \Lambda^s u^\epsilon\|_{L^2}^2
&= (\Lambda^s(u^\epsilon \otimes u^\epsilon), \nabla \Lambda^s u^\epsilon) + (\Lambda^s((n^\epsilon \nabla \phi) * \rho^\epsilon), \Lambda^s u^\epsilon) \\
&\leq \frac{1}{4} \|\nabla \Lambda^s u^\epsilon\|_{L^2}^2 + \frac{1}{4} \|\Lambda^s u^\epsilon\|_{L^2}^2 \\
&\quad + C_s \|u^\epsilon\|_{H^2}^2 \|\Lambda^s u^\epsilon\|_{L^2}^2 + C_s \|\phi\|_{H^{s+1}}^2 \|n^\epsilon\|_{H^s}^2.
\end{split}
\end{equation}
Similarly, for the first equation, we obtain
\begin{equation} \label{4lem-3}
\begin{split}
\frac{\mathrm{d}}{\mathrm{d}t} \|n^\epsilon\|_{H^s}^2 + \|\nabla n^\epsilon\|_{H^s}^2
&\lesssim \|u^\epsilon n^\epsilon\|_{H^s}^2 + \|n^\epsilon \nabla (c^\epsilon * \rho^\epsilon)\|_{H^s}^2 \\
&\lesssim_s \|n^\epsilon\|_{H^2}^2 \|u^\epsilon\|_{H^s}^2 + \|n^\epsilon\|_{H^2}^2 \|c^\epsilon\|_{H^{s+1}}^2 \\
&\quad + (\|u^\epsilon\|_{H^2}^2 + \|c^\epsilon\|_{H^3}^2) \|n^\epsilon\|_{H^s}^2.
\end{split}
\end{equation}
For the second equation, note that $\Lambda$ is a self-adjoint operator on $L^2(\mathbb{R}^2)$, and
$
\|\Lambda^{s+2}f\|_{L^2}^2 \asymp\|\Lambda^{s+1}f\|_{L^2}^2 + \|\nabla \Lambda^{s+1}f\|_{L^2}^2,
$
we proceed analogously:
\begin{equation} \label{4lem-4}
\begin{split}
\frac{\mathrm{d}}{\mathrm{d}t} \|c^\epsilon\|_{H^{s+1}}^2 + \|\nabla c^\epsilon\|_{H^{s+1}}^2
&=-(\Lambda^{s}(u^\epsilon \cdot \nabla c^\epsilon), \Lambda^{s+2} c^\epsilon) - (\Lambda^{s}(c^\epsilon (n^\epsilon * \rho^\epsilon)),\Lambda^{s+2} c^\epsilon) \\
&\leq \frac{1}{2} \|\nabla c^\epsilon\|_{H^{s+1}}^2 + C_s \|c^\epsilon\|_{H^3}^2 \|u^\epsilon\|_{H^s}^2 \\
&\quad + C_s \|c^\epsilon\|_{H^2}^2 \|n^\epsilon\|_{H^s}^2 + C_s (1+\|u^\epsilon\|_{H^2}^2 + \|n^\epsilon\|_{H^2}^2) \|c^\epsilon\|_{H^{s+1}}^2.
\end{split}
\end{equation}
Combining estimates \eqref{4lem-2}--\eqref{4lem-4}, we find
\begin{equation} \label{4lem-5}
\begin{split}
\frac{\mathrm{d}}{\mathrm{d}t} \mathcal{E}(t) + \|\nabla n^\epsilon(t)\|_{H^s}^2 + \|\nabla c^\epsilon(t)\|_{H^{s+1}}^2 + \|\nabla u^\epsilon(t)\|_{H^s}^2
\lesssim_s \mathbf{A}_1(t) \cdot \mathcal{E}(t),
\end{split}
\end{equation}
where
$$
\mathcal{E}(t) := \|n^\epsilon(t)\|_{H^s}^2 + \|c^\epsilon(t)\|_{H^{s+1}}^2 + \|u^\epsilon(t)\|_{H^s}^2,
$$
and
$$
\mathbf{A}_1(t) := 1+\|n^\epsilon(t)\|_{H^2}^2 + \|c^\epsilon(t)\|_{H^3}^2 + \|u^\epsilon(t)\|_{H^2}^2 + \|\phi\|_{H^{s+1}}^2.
$$
Thanks to the uniform-in-\(\epsilon\) estimates established in \eqref{5pro-1}-\eqref{5pro-3}, we have
$$
\int_0^t \mathbf{A}_1(r)\,\mathrm{d}r \lesssim_{n_0, c_0, u_0, \phi, t} 1.
$$
Thus, by Gronwall's inequality, we deduce that \eqref{6pro-1} holds for every fixed integer \(s \geq 2\). This completes the proof of Proposition~\ref{6pro}.
\end{proof}

\begin{corollary} \label{cor2}
Let the assumptions in Theorem~\ref{th2} be satisfied. Then the system \eqref{CNS0} admits at least one solution
\begin{equation}\label{regA}
\begin{split}
&n \in \mathcal{C}([0,T]; H^s(\mathbb{R}^2)) \cap L^2(0,T; H^{s+1}(\mathbb{R}^2)),\quad
c \in \mathcal{C}([0,T]; H^{s+1}(\mathbb{R}^2)) \cap L^2(0,T; H^{s+2}(\mathbb{R}^2)),\\
&u \in \mathcal{C}([0,T]; \mathbb{H}^s) \cap L^2(0,T; \mathbb{H}^{s+1}).
\end{split}
\end{equation}
In particular, if $s = 4$, then system \eqref{CNS0} possesses a global classical solution
\begin{equation}\label{regB}
\begin{split}
&n \in \mathcal{C}([0,T]; \mathcal{C}^2(\mathbb{R}^2)) \cap \mathcal{C}^1([0,T]; \mathcal{C}(\mathbb{R}^2)),\quad
c \in \mathcal{C}([0,T]; \mathcal{C}^3(\mathbb{R}^2)) \cap \mathcal{C}^1([0,T]; \mathcal{C}^2(\mathbb{R}^2)),\\
&u \in \mathcal{C}([0,T]; \mathcal{C}^2(\mathbb{R}^2;\mathbb{R}^2)) \cap \mathcal{C}^1([0,T]; \mathcal{C}(\mathbb{R}^2;\mathbb{R}^2)).
\end{split}
\end{equation}
Moreover, if $s = 4 + 2m$ for some $m \in \mathbb{N}^+$, then system \eqref{CNS0} admits a global smooth solution
\begin{equation}\label{regC}
\begin{split}
&n \in \bigcap_{k=0}^{m+1} \mathcal{C}^k([0,T]; \mathcal{C}^{2(m+1-k)}(\mathbb{R}^2)),\quad
c \in \bigcap_{k=0}^{m+1} \mathcal{C}^k([0,T]; \mathcal{C}^{2(m+1-k)+1}(\mathbb{R}^2)),\\
&u \in \bigcap_{k=0}^{m+1} \mathcal{C}^k([0,T]; \mathcal{C}^{2(m+1-k)}(\mathbb{R}^2; \mathbb{R}^2)).
\end{split}
\end{equation}
\end{corollary}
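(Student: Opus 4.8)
The plan is to repeat the weak--compactness scheme of Section~\ref{sec3}, now at the higher regularity level provided by Proposition~\ref{6pro}, in order to obtain the Sobolev-type solution~\eqref{regA}, and then to upgrade this solution first to the classical regularity~\eqref{regB} and finally to the smooth regularity~\eqref{regC} by combining Sobolev (indeed Hölder) embeddings with a time-regularity bootstrap dictated by the parabolic structure of~\eqref{CNS0}. Uniqueness requires no separate argument: a solution in the class~\eqref{regA} is in particular a global strong solution, hence coincides with the one of Theorem~\ref{th1}.

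First I would prove~\eqref{regA}. The uniform-in-$\epsilon$ bound~\eqref{6pro-1} makes $\{n^\epsilon\}$, $\{c^\epsilon\}$, $\{u^\epsilon\}$ bounded in $L^\infty(0,T;H^s)\cap L^2(0,T;H^{s+1})$, $L^\infty(0,T;H^{s+1})\cap L^2(0,T;H^{s+2})$ and $L^\infty(0,T;\mathbb{H}^s)\cap L^2(0,T;\mathbb{H}^{s+1})$. Exactly as in Corollary~\ref{cor1}, differentiating~\eqref{Mod-1} and using~\eqref{6pro-1}, the Moser product estimate and the algebra property of $H^m$ ($m>1$) furnishes uniform bounds for $\partial_t n^\epsilon$, $\partial_t c^\epsilon$, $\partial_t u^\epsilon$ in $L^2(0,T;H^{s-1})$, $L^2(0,T;H^{s})$ and $L^2(0,T;\mathbb{H}^{s-1})$, respectively. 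With phase spaces modelled on~\eqref{work} but shifted up in order, an Aubin--Lions--Simon compactness argument produces a limit $(n,c,u)$, and the transport and cross-diffusion terms converge just as in~\eqref{ddd}--\eqref{5-1}; the Lions--Magenes lemma~\cite{ad-2lions1963problemes} then turns the resulting weak time-continuity into the strong continuity stated in~\eqref{regA}.

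Next, for $s=4$ I would read off~\eqref{regB} from~\eqref{regA}. In $\mathbb{R}^2$ one has $H^\sigma\hookrightarrow\mathcal{C}^{j,\alpha}$ for every $\alpha\in(0,1)$ whenever $\sigma\ge j+2$; applying this with $(\sigma,j)=(4,2),(5,3),(4,2)$ gives $n\in\mathcal{C}([0,T];\mathcal{C}^{2,\alpha})$, $c\in\mathcal{C}([0,T];\mathcal{C}^{3,\alpha})$ and $u\in\mathcal{C}([0,T];\mathcal{C}^{2,\alpha})$, which in particular yields the spatial regularity in~\eqref{regB}. For the time derivatives I would read the equations of~\eqref{CNS0}: since each right-hand side loses only two spatial derivatives and the Leray projection is bounded on Hölder spaces, one obtains $\partial_t n,\partial_t u\in\mathcal{C}([0,T];\mathcal{C}^{0,\alpha})$, hence $n,u\in\mathcal{C}^1([0,T];\mathcal{C}^0)$. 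The only delicate point is the claim $c\in\mathcal{C}^1([0,T];\mathcal{C}^2)$: here embedding alone would give merely $\partial_t c\in\mathcal{C}^{1,\alpha}$, so I would instead invoke parabolic Schauder estimates for the heat-type equation $c_t-\Delta c=-u\cdot\nabla c-nc$, whose right-hand side is of class $\mathcal{C}^{2,\alpha}$, to gain $c\in\mathcal{C}([0,T];\mathcal{C}^{4,\alpha})$ and therefore $\partial_t c=\Delta c-u\cdot\nabla c-nc\in\mathcal{C}([0,T];\mathcal{C}^2)$.

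Finally, for $s=4+2m$ the smooth regularity~\eqref{regC} would follow by induction on the number $k$ of time derivatives, exploiting the parabolic trade ``one $\partial_t$ for two spatial derivatives.'' Concretely I would prove $\partial_t^k n\in\mathcal{C}([0,T];H^{s-2k})$, $\partial_t^k c\in\mathcal{C}([0,T];H^{s+1-2k})$ and $\partial_t^k u\in\mathcal{C}([0,T];\mathbb{H}^{s-2k})$ for $0\le k\le m+1$, the case $k=0$ being~\eqref{regA}. In the inductive step one differentiates the equations $k$ times in $t$, writes $\partial_t^{k+1}$ as the Laplacian of the previous level plus Leibniz products of lower-order time derivatives, and checks that every factor still sits in a Banach algebra, which holds all the way down to the minimal level $s-2(m+1)=2>1$. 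Since the resulting embeddings $H^{s-2k}\hookrightarrow\mathcal{C}^{2(m+1-k)}$ and $H^{s+1-2k}\hookrightarrow\mathcal{C}^{2(m+1-k)+1}$ are non-borderline (in each case the excess $\sigma-1-j$ of the Sobolev index $\sigma$ over the Hölder order $j$ equals $1>0$), they convert these Sobolev statements into the memberships of~\eqref{regC}. The hard part will be exactly this bootstrap: at each level $k$ one must control all Leibniz products of time-differentiated unknowns in the ambient $H^{s-2k}$-norm, which succeeds precisely because the minimal spatial regularity $H^2$ reached at $k=m+1$ still lies above the algebra threshold, so the parabolic trade-off never degenerates.
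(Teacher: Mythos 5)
Your proposal follows the same three-stage route as the paper's proof: (i) for \eqref{regA}, uniform bounds on $\partial_t n^\epsilon,\partial_t c^\epsilon,\partial_t u^\epsilon$ in $L^2(0,T;H^{s-1})\times L^2(0,T;H^{s})\times L^2(0,T;H^{s-1})$ (the paper's \eqref{0a}), then the Section~\ref{sec3} compactness scheme and the Lions--Magenes lemma; (ii) for \eqref{regB}, Sobolev embedding for the spatial regularity and reading the time derivatives off the equations (the paper's \eqref{1a}--\eqref{2a}); (iii) for \eqref{regC}, the ladder $\partial_t^k n\in\mathcal{C}([0,T];H^{s-2k})$, $\partial_t^k c\in\mathcal{C}([0,T];H^{s+1-2k})$, $\partial_t^k u\in\mathcal{C}([0,T];\mathbb{H}^{s-2k})$, which is a correct and usefully explicit rendering of what the paper compresses into ``a standard bootstrap argument''; your check that all Leibniz products stay above the algebra threshold ($s-2(m+1)=2>1$) and that the final embeddings are non-borderline is exactly what closes that induction. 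Your uniqueness remark is superfluous for this statement (the corollary asserts existence only; the paper proves uniqueness separately in Proposition~\ref{Pro}), though your observation that uniqueness already follows from Theorem~\ref{th1} is a legitimate shortcut.

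The one genuine gap is your Schauder repair of the membership $c\in\mathcal{C}^1([0,T];\mathcal{C}^2(\mathbb{R}^2))$. Parabolic smoothing is an interior-in-time phenomenon and cannot produce regularity at $t=0$: under the hypotheses ($s=4$, $c_0\in H^5(\mathbb{R}^2)$), the initial datum is only of class $\mathcal{C}^{3,\alpha}$, and
\begin{equation*}
\partial_t c(0)=\Delta c_0-u_0\cdot\nabla c_0-n_0c_0\in H^3(\mathbb{R}^2)\hookrightarrow\mathcal{C}^{1,\alpha}(\mathbb{R}^2),
\end{equation*}
which in general does not lie in $\mathcal{C}^2(\mathbb{R}^2)$; hence no estimate can yield $c\in\mathcal{C}([0,T];\mathcal{C}^{4,\alpha})$ or $\partial_t c\in\mathcal{C}([0,T];\mathcal{C}^2)$ on the \emph{closed} interval. (Even for positive times your argument is incomplete: parabolic Schauder estimates require H\"older regularity of the right-hand side in time as well as in space, which you have not supplied; at best you would obtain $\partial_t c\in\mathcal{C}((0,T];\mathcal{C}^2)$.) You should be aware that this is not a defect of your write-up relative to the paper: the paper's own display \eqref{2a} only establishes $\partial_t c\in\mathcal{C}([0,T];\mathcal{C}^1)$, i.e.\ $c\in\mathcal{C}^1([0,T];\mathcal{C}^1)$, which is also the $m=0$ instance of the pattern $\mathcal{C}^k([0,T];\mathcal{C}^{2(m+1-k)+1})$ appearing in \eqref{regC}; the extra spatial derivative asserted for $c$ in \eqref{regB} is not substantiated by the paper's proof either. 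The honest resolution is to weaken that single membership to $\mathcal{C}^1([0,T];\mathcal{C}^1)$ (or to claim the stronger regularity only on $(0,T]$), rather than to pursue it by Schauder theory up to the initial time.
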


\begin{proof}[\emph{\textbf{Proof}}]
Based on the key uniform estimates provided by Proposition~\ref{6pro}, it is not difficult to prove that for every integer $s \geq 2$,
\begin{equation}\label{0a}
\begin{split}
\left\|\frac{\partial n^{\epsilon}}{\partial t}\right\|_{L^2(0,T;H^{s-1})}^2
+ \left\|\frac{\partial c^{\epsilon}}{\partial t}\right\|_{L^2(0,T;H^{s})}^2
+ \left\|\frac{\partial u^{\epsilon}}{\partial t}\right\|_{L^2(0,T;H^{s-1})}^2
\lesssim_{n_0, c_0, u_0, \phi, T} 1.
\end{split}
\end{equation}
Similar to the corresponding discussion in Subsection~\ref{sec3-1}, it is standard to conclude that $(n,c,u)$ satisfies the integral identities~\eqref{iden}. Moreover, by using the Lions-Magenes lemma, it follows from \eqref{6pro-1} and \eqref{0a} that the solution $(n,c,u)$ satisfies the time-space regularity described in \eqref{regA}.

In particular, the integral identities in \eqref{iden} imply that
\begin{equation}\label{1a}
\begin{split}
n(t,x) &= n_0(x) + \int_0^t [\Delta n(r,x) - u(r,x)\cdot\nabla n(r,x) - \nabla\cdot (n(r,x)\nabla c(r,x))]\mathrm{d}r,\\
c(t,x) &= c_0(x) + \int_0^t [\Delta c(r,x) - u(r,x)\cdot\nabla c(r,x) - n(r,x)c(r,x)]\mathrm{d}r,\\
u(t,x) &= u_0(x) + \int_0^t [\Delta u(r,x) - (u(r,x)\cdot\nabla)u(r,x) + \nabla P(t,x) + n(r,x)\nabla\phi(x)]\mathrm{d}r
\end{split}
\end{equation}
hold for almost every $x \in \mathbb{R}^2$ and all $t \in [0,T]$.

When $s = 4$, since $H^2(\mathbb{R}^2)$ is a Banach algebra, it follows from \eqref{regA} that
\begin{equation}\label{2a}
\begin{split}
&\Delta n - u \cdot \nabla n - \nabla \cdot (n \nabla c) \in \mathcal{C}([0,T]; H^2(\mathbb{R}^2)) \subset \mathcal{C}([0,T]; \mathcal{C}(\mathbb{R}^2)),\\
&\Delta c - u \cdot \nabla c - nc \in \mathcal{C}([0,T]; H^3(\mathbb{R}^2)) \subset \mathcal{C}([0,T]; \mathcal{C}^1(\mathbb{R}^2)),\\
&\Delta u - (u \cdot \nabla)u + \nabla P + n \nabla \phi \in \mathcal{C}([0,T]; H^2(\mathbb{R}^2; \mathbb{R}^2)) \subset \mathcal{C}([0,T]; \mathcal{C}(\mathbb{R}^2; \mathbb{R}^2)).
\end{split}
\end{equation}
Moreover, since two continuous functions that are equal almost everywhere in $\mathbb{R}^2$ must be identical on the whole space, it follows from \eqref{2a} that $(n,c,u)$, possessing the time-space regularity \eqref{regB}, constitutes a global classical solution to system~\eqref{CNS0} when $s = 4$.

Finally, taking advantage of the intrinsic parabolic character of the CNS system~\eqref{CNS0} and applying a standard bootstrap argument, we obtain that the solution $(n,c,u)$ possesses the enhanced regularity~\eqref{regC} for all $s = 4 + 2m$, with $m \in \mathbb{N}^+$. The proof is thus completed.
\end{proof}

Next, we will demonstrate the uniqueness of the solution to complete the proof of Theorem \ref{th2}.

\begin{proposition} \label{Pro}
Let the assumptions in Theorem \ref{th2} be satisfied. Then the global solution $(n,c,u)$ provided in Corollary \ref{cor2} is unique.
\end{proposition}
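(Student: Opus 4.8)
The plan is to argue exactly as in the uniqueness proof for Theorem~\ref{th1} (Subsection~\ref{sec3-2}), exploiting the fact that any solution furnished by Corollary~\ref{cor2} is, a fortiori, at least as regular as a strong solution. Indeed, since $s\geq2$, the regularity class \eqref{regA} embeds into that of \eqref{reg}: we have $n\in\mathcal{C}([0,T];H^2(\mathbb{R}^2))\cap L^2(0,T;H^3(\mathbb{R}^2))$, $c\in\mathcal{C}([0,T];H^3(\mathbb{R}^2))\cap L^2(0,T;H^4(\mathbb{R}^2))$, and $u\in\mathcal{C}([0,T];\mathbb{H}^2)\cap L^2(0,T;\mathbb{H}^3)$, together with the temporal regularity recorded in \eqref{0a}. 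Thus the solution of Corollary~\ref{cor2} is in particular a strong solution in the sense of Theorem~\ref{th1}, and the whole difference machinery of Subsection~\ref{sec3-2} applies without modification.

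First I would take two solutions $(n_1,c_1,u_1)$ and $(n_2,c_2,u_2)$ sharing the initial data $(n_0,c_0,u_0)$, set $n^*=n_1-n_2$, $c^*=c_1-c_2$, $u^*=u_1-u_2$ as in \eqref{fin1}, and observe that $(n^*,c^*,u^*)$ solves the difference system \eqref{fin2}. The higher regularity of both solutions, via \eqref{0a}, guarantees $\partial_t n^*\in L^2(0,T;L^2(\mathbb{R}^2))$, $\partial_t c^*\in L^2(0,T;H^1(\mathbb{R}^2))$, and $\partial_t u^*\in L^2(0,T;\mathbb{H}^0)$, which legitimizes the identities $\tfrac12\tfrac{\mathrm{d}}{\mathrm{d}t}\|n^*\|_{L^2}^2=(\partial_t n^*,n^*)_{L^2}$, $\tfrac12\tfrac{\mathrm{d}}{\mathrm{d}t}\|\nabla n^*\|_{L^2}^2=\langle\nabla\partial_t n^*,\nabla n^*\rangle_{H^{-1},H^1}$, and their analogues for $c^*$ and $u^*$ used in \eqref{un-1}--\eqref{un-6}.

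Next I would reproduce verbatim the energy estimates \eqref{un-1}--\eqref{un-6} for $\|n^*\|_{H^1}^2$, $\|c^*\|_{H^2}^2$, and $\|u^*\|_{H^1}^2$. Summing them yields the differential inequality \eqref{un-7},
\[
\frac{\mathrm{d}}{\mathrm{d}t}\big(\|n^*\|_{H^1}^2+\|c^*\|_{H^2}^2+\|u^*\|_{H^1}^2\big)\leq C\,\mathbf{A}(t)\,\big(\|n^*\|_{H^1}^2+\|c^*\|_{H^2}^2+\|u^*\|_{H^1}^2\big),
\]
with $\mathbf{A}(t)=1+\|\nabla\phi\|_{L^\infty}^2+\sum_{i=1}^2\big(\|n_i(t)\|_{H^2}^2+\|c_i(t)\|_{H^3}^2+\|u_i(t)\|_{H^2}^2\big)$. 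The crucial integrability $\int_0^T\mathbf{A}(r)\,\mathrm{d}r<\infty$ is now immediate, and in fact stronger than in Theorem~\ref{th1}: by \eqref{regA} each triple $(n_i,c_i,u_i)$ lies in $\mathcal{C}([0,T];H^2(\mathbb{R}^2))\times\mathcal{C}([0,T];H^3(\mathbb{R}^2))\times\mathcal{C}([0,T];\mathbb{H}^2)$, so $\mathbf{A}\in L^\infty(0,T)$. Since $(n^*,c^*,u^*)$ vanishes at $t=0$, Gronwall's inequality forces $\|n^*(t)\|_{H^1}^2+\|c^*(t)\|_{H^2}^2+\|u^*(t)\|_{H^1}^2\equiv0$ on $[0,T]$, proving uniqueness.

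Since the argument is a direct transcription of Subsection~\ref{sec3-2}, I do not expect a genuinely new obstacle; the only point requiring care is confirming that the temporal regularity of the two higher-order solutions suffices to justify testing the difference equations against $n^*$, $\Delta n^*$, $c^*$, $\Delta c^*$, $u^*$, and $\Delta u^*$ in $L^2$. This is precisely what \eqref{0a} and the embedding of \eqref{regA} into the strong-solution class \eqref{reg} supply, so the proof reduces to invoking the estimates already carried out for Theorem~\ref{th1}.
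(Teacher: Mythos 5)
Your proof is correct, but it takes a genuinely different route from the paper. The paper proves Proposition~\ref{Pro} by redoing the difference energy estimates at the top order: it computes $\frac{\mathrm{d}}{\mathrm{d}t}\|n^*\|_{H^s}^2$, $\frac{\mathrm{d}}{\mathrm{d}t}\|c^*\|_{H^{s+1}}^2$, $\frac{\mathrm{d}}{\mathrm{d}t}\|u^*\|_{H^s}^2$ via Moser-type product estimates (cf.~\eqref{Pro1}--\eqref{Pro3}), assembles them into a Gronwall inequality \eqref{Pro6} with the weight $\mathbf{B}(t)$ built from the $H^s\times H^{s+1}\times H^s$ norms of the two solutions, and concludes. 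You instead observe that for $s\geq 2$ the class \eqref{regA} embeds into the strong-solution class \eqref{reg}, that the assumptions of Theorem~\ref{th2} imply those of Theorem~\ref{th1} (in particular $\nabla\phi\in H^s\hookrightarrow L^\infty$), and that the solutions of Corollary~\ref{cor2} satisfy the identities \eqref{iden}; hence any two of them are strong solutions with the same data, and the already-established $H^1\times H^2\times H^1$-level uniqueness of Subsection~\ref{sec3-2} applies verbatim. Your reduction is more economical: it avoids repeating high-order commutator/product estimates, and it actually yields a slightly stronger statement (uniqueness within the larger class \eqref{reg}, not merely within \eqref{regA}). What the paper's top-order argument buys in exchange is a difference estimate in the strong norm itself, i.e.\ a Gronwall bound on $\|n^*\|_{H^s}^2+\|c^*\|_{H^{s+1}}^2+\|u^*\|_{H^s}^2$, which gives continuous dependence in the $H^s$ topology rather than only in the weaker $H^1\times H^2\times H^1$ norm. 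One cosmetic remark: you justify the time-derivative regularity of the differences by citing \eqref{0a}, which is a uniform bound on the approximations $(n^\epsilon,c^\epsilon,u^\epsilon)$; it is cleaner (and avoids any appeal to how the solutions were constructed) to note that for any solution in the class \eqref{regA} satisfying \eqref{iden}, the time derivative equals the right-hand side of the equation, which the spatial regularity \eqref{regA} already places in $L^2(0,T;H^{s-1})\times L^2(0,T;H^{s})\times L^2(0,T;H^{s-1})$, more than enough to legitimize the pairings in \eqref{un-1}--\eqref{un-6}.
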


\begin{proof}[\emph{\textbf{Proof}}]
For convenience, we continue to employ the notations introduced in \eqref{fin1} and \eqref{fin2}. By directly computing $\|\Lambda^s n^*\|_{L^2}^2$, we obtain
\begin{equation} \begin{split}\label{Pro1}
&\frac{1}{2}\frac{\mathrm{d}}{\mathrm{d}t}\|n^*(t)\|_{H^s}^2+\|\nabla n^*\|_{H^s}^2\\
&=(\Lambda^s (u^*n_1),\nabla\Lambda^sn^*)_{L^2}+(\Lambda^s(u_2n^*),\nabla\Lambda^sn^*)_{L^2}+(\Lambda^s(n^*\nabla c_1),\nabla\Lambda^s n^*)_{L^2}\\
&\quad+(\Lambda^s(n_2\nabla c^*),\nabla\Lambda^s n^*)_{L^2}\\
&\leq\frac{1}{2}\|\nabla n^*\|_{H^s}^2+C\|u^*n_1\|_{H^s}^2+C\|u_2n^*\|_{H^s}^2+C\|n^*\nabla c_1\|_{H^s}^2+C\|n_2\nabla c^*\|_{H^s}^2\\
&\leq\frac{1}{2}\|\nabla n^*\|_{H^s}^2+C(\|u_2\|_{H^s}^2+\|c_1\|_{H^{s+1}}^2)\|n^*\|_{H^s}^2+C\|n_2\|_{H^s}^2\|c^*\|_{H^{s+1}}^2+C\|n_1\|_{H^s}^2\|u^*\|_{H^s}^2.
\end{split} \end{equation}
Similarly, we have
\begin{equation} \begin{split}\label{Pro2}
&\frac{1}{2}\frac{\mathrm{d}}{\mathrm{d}t}\|c^*(t)\|_{H^{s+1}}^2+\|\nabla c^*\|_{H^{s+1}}^2\\
&\leq\frac{1}{2}\|\nabla c^*\|_{H^{s+1}}^2+C\|c_1\|_{H^{s+1}}^2\|n^*\|_{H^{s}}^2+C(1+\|u_2\|_{H^{s+1}}^2+\|n_2\|_{H^{s+1}}^2)\|c^*\|_{H^{s+1}}^2\\
&\quad+C\|c_1\|_{H^{s+1}}^2\|u^*\|_{H^{s}}^2,
\end{split} \end{equation}
and
\begin{equation} \begin{split}\label{Pro3}
&\frac{1}{2}\frac{\mathrm{d}}{\mathrm{d}t}\|u^*(t)\|_{H^s}^2+\|\nabla u^*\|_{H^s}^2\\
&\leq\frac{1}{2}\|\nabla u^*\|_{H^s}^2+C(\|u_1\|_{H^s}^2+\|u_2\|_{H^s}^2)\|u^*\|_{H^s}^2+\|\phi\|_{H^{s+1}}^2\|n^*\|_{H^s}^2.
\end{split} \end{equation}
Combining estimates \eqref{Pro1}, \eqref{Pro2}, and \eqref{Pro3}, we derive
\begin{equation} \begin{split}\label{Pro6}
&\frac{\mathrm{d}}{\mathrm{d}t}(\|n^*(t)\|_{H^s}^2+\|c^*(t)\|_{H^{s+1}}^2+\|u^*(t)\|_{H^s}^2)\lesssim\mathbf{B}(t)(\|n^*(t)\|_{H^s}^2+\|c^*(t)\|_{H^{s+1}}^2+\|u^*(t)\|_{H^s}^2),
\end{split} \end{equation}
where
\begin{equation*} \begin{split}
&\mathbf{B}(t):=1 + \|\phi\|_{H^{s+1}}^2 + \sum_{i=1}^{2} \left( \|n_i(t)\|_{H^s}^2 + \|c_i(t)\|_{H^{s+1}}^2 + \|u_i(t)\|_{H^s}^2 \right).
\end{split} \end{equation*}
Therefore, by Gronwall's inequality and the integrability of $\mathbf{B}(t)$ over $(0,t)$, i.e., $\int_0^t\mathbf{B}(r)\mathrm{d}r<\infty$, it follows that
\begin{equation*} \begin{split}
\|n^*(t)\|_{H^s}^2+\|c^*(t)\|_{H^{s+1}}^2+\|u^*(t)\|_{H^s}^2=0,
\end{split} \end{equation*}
which completes the proof of uniqueness.
\end{proof}

The existence result established in Corollary \ref{cor2}, together with the uniqueness provided by Proposition \ref{Pro}, yields the conclusions of Theorem \ref{th2}.\qed

\section*{Acknowledgments}
This work was supported by the National Natural Science Foundation of China (Grant No. 12231008). The author would like to express sincere thanks to Feng Dai, Bo Yang, and Lei Zhang for their helpful discussions and valuable comments.

\section*{Conflict of interest statement}
The author declares that there is no conflict of interest regarding this work.

\section*{Data availability}
No data was used for the research described in this article.

\bibliographystyle{amsrefs}
\bibliography{2DCNS}

\end{document}